\theoremstyle{plain}
\newtheorem{theorem}{Theorem}[section]
\newtheorem{lemma}[theorem]{Lemma}
\newtheorem{corollary}[theorem]{Corollary}
\title{Retracing Cantor's first steps\\ in Brouwer's company}
\author{Wim Veldman}
\address{Institute for Mathematics, Astrophysics and Particle Physics, Faculty of Science, Radboud University Nijmegen,
Postbus 9010, 6500 GL Nijmegen, the Netherlands}
\email{W.Veldman@science.ru.nl}
\date{}
\dedicatory{What we call the beginning is often the end\\ And to make an end is to make a beginning.\\The end is where we start from. \\ \flushright T.S.~Eliot, \textit{Little Gidding},  1942}
\begin{document}

\begin{abstract} 

 We prove intuitionistic versions of the classical theorems saying that all countable \textit{closed} subsets of $[-\pi,\pi]$ and even all countable subsets of $[-\pi,\pi]$ are sets of uniqueness. We introduce the \emph{co-derivative} of an open subset  of the set $\mathcal{R}$ of the real numbers as a constructively possibly more useful notion than the derivative of a closed subset of $\mathcal{R}$.  We also have a look at an intuitionistic version of Cantor's theorem that a closed set is the union of a perfect set and an at most countable set.
\end{abstract}
  \maketitle\section{Introduction} G.~Cantor discovered the transfinite and the uncountable while studying and extending B.~Riemann's work on trigonometric series.  He then began  set theory and forgot   his early problems, see \cite{dauben} and \cite{purkert}.

Cantor's work fascinated L.E.J.\;Brouwer but he did not come to terms with it and    started   \textit{intuitionistic mathematics}.  Like Shakespeare, who  wrote his plays as new  versions of works by earlier playwrights, he hoped to turn Cantor's tale into a better story.  

Brouwer  never returned to the questions and results that caused the creation of set theory. Adopting Brouwer's point of view, we do so now.

Brouwer insisted upon a constructive interpretation of statements of the form $A \;\vee\;B$ and $\exists x \in V[A(x)]$. One is only allowed to affirm $A \;\vee\; B$ if one has a reason to affirm $A$ or a reason to affirm $B$. One is only allowed to affirm $\exists x \in V[A(x)]$ if one is able to produce an element $x_0$ of the set $V$ and also evidence for the corresponding statement $A(x_0)$. Brouwer therefore had to reject the logical principle $A\;\vee\;\neg A$.

 Brouwer also came to formulate and accept certain new axioms, most importantly: the \textit{Fan Theorem}, the \textit{Principle of Induction on Monotone Bars} and the \textit{Continuity Principle}.  We shall make use  of these three principles and introduce  them at the  place where we first need them. The first two of them are constructive versions of results in classical, non-intuitionistic analysis. The third one does not stand a non-constructive reading of its quantifiers. It will make its appearance only in Section 10.

The paper has 12 Sections. In Sections 2 and 3, we verify that the two basic results of Riemann that became Cantor's starting point were proven by Riemann in a constructive way.    Section 4 contains an intuitionistic proof of Cantor's Uniqueness Theorem.   The proof requires the Cantor-Schwarz Lemma and this Lemma  obtains two proofs.  Section 5  proves an intuitionistic version of Cantor's result that every finite subset of $[-\pi,\pi]$ is \textit{a set of uniqueness}. In Section 6 we introduce the \textit{co-derivative set} of an an open subset of $\mathcal{R}$, which is itself also an open subset of $\mathcal{R}$. Cantor called a closed subset $\mathcal{F}$ of $[-\pi,\pi]$ \textit{reducible} if one, starting from $\mathcal{F}$, and   iterating the operation of taking the derivative (if needed, transfinitely many times, forming the intersection at limit steps), at last obtains the empty set. We shall call an  open subset $\mathcal{G}$ of $(-\pi,\pi)$ \textit{eventually full} if one, starting from $\mathcal{G}$, and   iterating  the operation of taking the co-derivative (if needed, transfinitely many times, forming the union at limit steps), at last obtains the whole  set $(-\pi,\pi)$. In Section 6  we prove  the intuitionistic version of Cantor's result that every  closed and reducible subset of $[-\pi,\pi]$  is a set of uniqueness: every open and eventually full subset of $(-\pi,\pi)$  \textit{guarantees uniqueness}.  Section 7 gives \textit{examples} of open sets that are eventually full. Section 8 offers an intuitionistic proof, using the Principle of  Induction on Monotone Bars,  that every  co-enumerable  and \textit{co-located} open subset of $(-\pi,\pi)$ is eventually full and, therefore, guarantees uniqueness. Section 9 proves the intuitionistic version of a   stronger theorem, dating from 1911 and due to F.~Bernstein and W.~Young:  all co-enumerable subsets of $[-\pi,\pi]$, not only the ones that are open and co-located, guarantee uniqueness.  This proof needs an extended form of the Cantor-Schwarz Lemma that is proven in two ways. Section 10 shows the simplifying effect of Brouwer's Continuity Principle: it makes Cantor's Uniqueness Theorem trivial and solves Cantor's first and nasty problem in the field of trigonometric expansions, see Lemma \ref{L:cl}(i), in an easy way.   Section 11 treats an intuitionistic version of \textit{Cantor's Main Theorem}: every closed set satisfies one of the alternatives offered by  the continuum hypothesis. In Section 12 we make some observations on Brouwer's  work on Cantor's Main Theorem in \cite{brouwer19}.  
  
 Our journey starts in the middle of the nineteenth century.

\section{Riemann's two results}\label{S:riemann1}

We let $\mathcal{R}$ denote the set of the real numbers. A \textit{real number} $x$ is an infinite sequence $x(0), x(1), \dots$ of pairs $x(n) = \bigl(x'(n), x''(n)\bigr)$ of rationals such that $\forall n[x'(n)\le x'(n+1)\le x''(n+1)\le x''(n)]$  and $\forall m\exists n[x''(n) -x'n <\frac{1}{2^m}]$. For all real numbers $x,y$, we define: $x<_\mathcal{R} y \leftrightarrow \exists n[x''(n) <y'(n)]$ and $x\;\#_\mathcal{R}\;y\leftrightarrow (x<_\mathcal{R} y \;\vee\; y<_\mathcal{R} x)$. The latter \textit{apartness relation} is, in constructive mathematics, more important than the \textit{equality} or \textit{real coincidence} relation.  For all real numbers $x,y$, we define: $x\;\le_\mathcal{R}\;y\leftrightarrow\forall n[x' (n) \le y''(n)]\leftrightarrow \neg (y <_\mathcal{R} x)$ and $x =_\mathcal{R} y \leftrightarrow (x \le_\mathcal{R} y \;\wedge \; y \le_\mathcal{R} x)\leftrightarrow \neg(x \;\#_\mathcal{R}\;y)$. 

It is important that the relations $\le_\mathcal{R}$ and $=_\mathcal{R}$ are \textit{negative} relations. If one wants to prove: $x \le_\mathcal{R} y$ (or: $x=_\mathcal{R}y$, respectively) one may start from the (positive) assumption $y<_\mathcal{R} x$ (or: $x\;\#_\mathcal{R}\; y$, respectively) and try to obtain a contradiction.
 
 We sometimes use the fact that the relations $<_\mathcal{R}$ and $\#_\mathcal{R}$ are \textit{co-transitive}, that is, for all $x,y,z$ in $\mathcal{R}$, $x<_\mathcal{R} z \rightarrow (x<_\mathcal{R} y \;\vee\; y<_\mathcal{R} z)$ and $x\#_\mathcal{R} z \rightarrow (x\#_\mathcal{R} y \;\vee\; y\#_\mathcal{R} z)$.
 
All these relations are, in general,  \textit{undecidable}. For instance, given real numbers $x, y$ one may be unable to say which of the two statements `$x \le_\mathcal{R} y$' or `$y\le_\mathcal{R} x$' is true. Nevertheless, for all numbers $x,y$, one may build a number $z$ such that $x\le_\mathcal{R}z \;\wedge\; y\le_\mathcal{R} z \;\wedge\; \forall u \in \mathcal{R}[(x\le_\mathcal{R}u \;\wedge\; y\le_\mathcal{R} u)\rightarrow z\le_\mathcal{R} u]$. This number, \textit{the least upper bound of $\{x,y\}$}, is denoted by `$\sup(x,y)$'. Similarly, one has $\inf(x,y)$, \textit{the greatest lower bound of $\{x,y\}$}.  

If confusion seems improbable, we sometimes write `$<,\le, =$' where one might expect `$<_\mathcal{R}, \le_\mathcal{R}, =_\mathcal{R}$'. 

\medskip
B. ~Riemann    studied the question: for which functions $F: [-
\pi, \pi]\rightarrow \mathcal{R}$ do there exist real numbers $b_0, a_1, b_1, \ldots$ such that, for all $x$ in $[-\pi, \pi]$,
$$F(x) =\frac{b_0}{2} +\sum_{n>0} a_n \sin nx + b_n \cos nx?$$
He did so in his \textit{Habilitationsschrift} \cite{riemann67}, written in 1854 and published, one year after his death at the age of 39,  by R. ~Dedekind, in 1867. 
Riemann started from a given  infinite sequence of reals
$b_0, a_1, b_1, \ldots$ and  assumed: 
\begin{quote} for each $x$ in $[-\pi, \pi]$, $\lim_{n \rightarrow \infty} (a_n\sin nx + b_n \cos nx )= 0$. \end{quote}

Under this assumption, the function $F$ defined by $$F(x) :=\frac{b_0}{2} +\sum_{n>0} a_n \sin nx + b_n \cos nx$$ 
 is, in general, only a \textit{partial} function from $[-\pi, \pi]$ to $\mathcal{R}$.  Riemann  decided to study the function:

$$G(x) := \frac{1}{4}b_0x^2 +\sum_{n>0}\frac{- a_n}{n^2} \sin nx + \frac{-b_n}{n^2} \cos nx$$

that we obtain by taking, for each term in the infinite sequence defining $F$,  the primitive of the  primitive. Note that, still under the above assumption, $G$ is defined everywhere on $[-\pi,\pi]$.

One might hope that the function $G$ is twice differentiable and that $G'' =F$, but, no,  that  hope seems idle. Riemann decided to replace the second derivative by a symmetric variant. He defined, for all $a,b$ in $\mathcal{R}$ such that $a<b$, for every function $H$ from $[a,b]$ to $\mathcal{R}$, for every $x$ in $(a,b)$,

  $$D^2 H(x) = \lim_{h\rightarrow 0} \frac{H(x+h) + H(x-h) - 2H(x)}{h^2}$$ and $$D^1 H(x) = \lim_{h\rightarrow 0} \frac{H(x+h) + H(x-h) - 2H(x)}{h}$$
 and  proved, for our functions $F$, $G$:
  
   \begin{enumerate}[\upshape 1.] \item for any $x $ in  $(-\pi, \pi)$, if $F(x)$ is defined, then $F(x) = D^2G(x)$, and, \item  for all $x$ in $(-\pi, \pi)$,  $D^1G(x) =0$. \end{enumerate}
   
  Note that,  for all $a,b$ in $\mathcal{R}$ such that $a<b$, for every function $H$ from $[a.b]$ to $\mathcal{R}$, 
    for all $x$ in $(a,b)$, if $H'(x)$ exists, then $D^1H(x)$ exists and $ D^1H(x)=0$.  For assume: $H'(x)$ exists. Then: $$\lim_{h\rightarrow 0} \frac{H(x+h) + H(x-h) - 2H(x)}{h}=$$ $$ \lim_{h \rightarrow 0} \frac{H(x+h)-H(x)}{h} -\lim_{h\rightarrow 0}\frac{H(x)-H(x-h)}{h}=H'(x)-H'(x)=0.$$
    
   Also note that, for any function $H$ from $[a,b]$ to $\mathcal{R}$, for all $x$ in $(a,b)$, if $H''(x)$ exists, then $D^2H(x)$ exists and $H''(x) = D^2H(x)$. The proof of this fact in \cite{bary64}, p. 186, is constructive and we quote it here. Assume: $H''(x)$ exists.
 Then: $$H(x+h) +H(x-h) -2H(x) = \int_0^h H'(x+t) -H'(x-t) dt.$$ Therefore:
 $$\lim_{h\rightarrow 0} |\frac{H(x+h) + H(x-h) - 2H(x)}{h^2} -H''(x)| =$$ $$\lim_{h \rightarrow 0} |\int_0^h\frac{2t}{h^2}\bigl(\frac{H'(x+t) -H'(x-t)}{2t}-H''(x)\bigr)dt|\le$$ $$ \lim_{h\rightarrow 0} \sup_{t\in[0,h]}|\frac{H'(x+t) -H'(x-t)}{2t}-H''(x)|=0.$$ 
 
 Note that, for all functions $H,K$ from $[a,b]$ to $\mathcal{R}$, for every $x$ in $(a,b)$, if $D^1H(x)$ and $D^1K(x)$ both exist, than $D^1(H+K)(x)$ exists and $D^1(H+K)(x) = D^1H(x) + D^1K(x)$, and, similarly, if $D^2H(x)$ and $D^2K(x)$ both exist, than $D^2(H+K)(x)$ exists and $D^2(H+K)(x) = D^2H(x) + D^2K(x)$.
 
 Our final observation will be so useful that we put it into a Lemma.
 
 \begin{lemma}\label{L:basic} Let $a,b$ in $\mathcal{R}$ be given such that $a<b$ and  let $H$ be a function from $[a,b]$ to $\mathcal{R}$. Let $z$ in $(a,b)$ be given such that $D^2H(z)$ is defined. Then: \begin{enumerate}[\upshape (i)] \item if $D^2H(z)>0$ then $\exists y \in[a,b][H(y) > H(z)]$, and, \item if $\forall y \in [a,b][H(y) \le H(z)]$, then $D^2H(z)\le 0$. \end{enumerate}\end{lemma}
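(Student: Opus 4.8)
The plan is to prove (i) directly from the definition of $D^2H(z)$ and then to derive (ii) in one line, using that $D^2H(z)\le_\mathcal{R}0$ is by definition the same as $\neg(0<_\mathcal{R}D^2H(z))$.

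For (i), I would assume $D^2H(z)>_\mathcal{R}0$. As this is a positive statement, fix a natural number $k$ with $D^2H(z)>_\mathcal{R}\frac{1}{2^k}$. Unwinding the limit defining $D^2H(z)$, produce a positive $h$ small enough that $z-h$ and $z+h$ both lie in $[a,b]$ --- possible because $z\in(a,b)$ --- and small enough that $\bigl|\frac{H(z+h)+H(z-h)-2H(z)}{h^2}-D^2H(z)\bigr|<_\mathcal{R}\frac{1}{2^{k+1}}$. Then $\frac{H(z+h)+H(z-h)-2H(z)}{h^2}>_\mathcal{R}\frac{1}{2^{k+1}}>_\mathcal{R}0$, hence $H(z+h)+H(z-h)>_\mathcal{R}2H(z)$, that is, $0<_\mathcal{R}\bigl(H(z+h)-H(z)\bigr)+\bigl(H(z-h)-H(z)\bigr)$.

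The one step needing constructive care is the passage from this last inequality to the disjunction $H(z+h)>_\mathcal{R}H(z)$ or $H(z-h)>_\mathcal{R}H(z)$. I would obtain it from co-transitivity of $<_\mathcal{R}$: applied to $0<_\mathcal{R}\bigl(H(z+h)-H(z)\bigr)+\bigl(H(z-h)-H(z)\bigr)$ with intermediate value $H(z+h)-H(z)$, it yields $0<_\mathcal{R}H(z+h)-H(z)$ or $H(z+h)-H(z)<_\mathcal{R}\bigl(H(z+h)-H(z)\bigr)+\bigl(H(z-h)-H(z)\bigr)$, and the second alternative simplifies to $0<_\mathcal{R}H(z-h)-H(z)$. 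In either case the corresponding point ($z+h$ or $z-h$, both in $[a,b]$) serves as the required witness $y$ with $H(y)>_\mathcal{R}H(z)$.

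For (ii), assume $\forall y\in[a,b][H(y)\le_\mathcal{R}H(z)]$ and, towards a contradiction, $0<_\mathcal{R}D^2H(z)$. Part (i) then supplies $y\in[a,b]$ with $H(z)<_\mathcal{R}H(y)$, contradicting $H(y)\le_\mathcal{R}H(z)$. Hence $\neg(0<_\mathcal{R}D^2H(z))$, i.e.\ $D^2H(z)\le_\mathcal{R}0$. I do not anticipate a serious obstacle here; the only genuinely non-classical point is the co-transitivity step just described, the rest being the routine extraction of a workable $h$ from the definition of the limit.
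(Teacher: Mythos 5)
Your proposal is correct and follows essentially the same route as the paper: extract a suitable $h$ from the definition of the limit so that the symmetric difference quotient is positive, split $0<(H(z+h)-H(z))+(H(z-h)-H(z))$ into one of the two summands being positive, and obtain (ii) from (i) by contraposition. You are somewhat more explicit than the paper in naming co-transitivity of $<_\mathcal{R}$ as the principle licensing the disjunctive split, but that is exactly what the paper uses implicitly.
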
  
 \begin{proof} (i) Assume: $a<z<b$ and $D^2H(z) >0$. Find $h$ such that $0<h$ and $(z-h, z+h)\subseteq (a, b)$ and $\frac{H(z+h) +H(z-h)-2H(z)}{h^2}=\frac{H(z+h) - H(z)}{h^2} + \frac{H(z-h) - H(z)}{h^2}>0$ and conclude: either $\frac{H(z+h) -H(z)}{h^2}>0$ or $\frac{H(z-h) -H(z)}{h^2}>0$, and therefore, either $H(z) <H(z+h)$ or $H(z) < H(z-h)$, so, in any case: $\exists y \in [a,b][H(z)<H(y)]$. 
 
 \smallskip
 (ii) This follows from (i), by contraposition.\end{proof}
 It is important that the positive statement \ref{L:basic}(i) is behind the negative and often used fact \ref{L:basic}(ii).
 
 \smallskip
 Note that Riemann's first result is already  a partial answer to the problem he tried to solve: \textit{if a function $F$ has a trigonometric expansion on $[-\pi,\pi]$, there must exist a continuous function $G$ from $[-\pi,\pi]$ to $\mathcal{R}$ such that $\forall x \in (-\pi,\pi)[F(x)=D^2G(x)]$}. Riemann draws  more sophisticated conclusions.

   \section{Riemann's constructive proofs of his two basic results }
   
  \subsection{Riemann's first result} We 
   prove, for the functions $F$ and $G$ introduced in Section \ref{S:riemann1}:  \textit{for every $x $ in  $(-\pi, \pi)$, if $F(x)$ is defined, then $F(x) = D^2G(x)$.} 
   
   \smallskip We  follow  Riemann's own argument. 
  
   Note that, for all $x$ in $[-\pi,\pi]$, for all $n>0$, for all $h\neq 0$, 
   $\sin n(x+2h) + \sin n(x-2h) - 2 \sin nx = 2\sin n x(\cos 2nh - 1)= -4\sin nx \sin^2 nh$, and 
    $\cos n(x+2h) + \cos n(x-2h) - 2 \cos nx = 2\cos n x(\cos 2nh - 1)= -4\cos nx \sin^2 nh$, and, therefore: 
    $\frac{\sin n(x+2h) + \sin n(x-2h) - 2 \sin nx}{(2h)^2}\cdot-\frac{1}{n^2} = \sin nx \bigl(\frac{\sin nh} {nh}\bigr)^2$ and

    $\frac{\cos n(x+2h) + \cos n(x-2h) - 2 \cos nx}{(2h)^2}\cdot-\frac{1}{n^2} = \cos nx \bigl(\frac{\sin nh} {nh}\bigr)^2$.
   
   Let $x$ in $(-\pi, \pi)$ be given.  Define $A_0 := \frac{1}{2}b_0$ and, for each $n>0$, $A_n :
   = a_0\sin nx + b_n \cos nx$.  Assume: $F(x) $ is defined, that is: $\sum A_n$ converges. Note: the infinite sequence $A_0, A_1, \ldots$ is bounded and, for all $h$, if $h\neq 0$ and $(x-h,x+h)\subseteq (-\pi,\pi)$, then  $S^0_h:=\frac{G(x+h) + G(x-h) -2G(x)}{h^2}=                                                                                                                                                                                                                                                                                                                                                                                                                                                                                                                                                                                                                                                                                                                                                                                                                                                                                                                                                                                                                                                                                                                                                                                                                                                                                                                                                                                                                                                                                                                                                                                                                                                                                                                                                                                                                                                                                                                                                                                                                                                                                                                                                                                                                                                                                                                                                                                                                                                                                                                                                                                                                                                                                                                                                                                                                                                                                                                                                                                                                             A_0+ \sum_{n>0} A_ n\bigl(\frac{\sin nh}{n h}\bigr)^2$ converges. 
Define, for each $n$, $R_n:=  \sum_{k=n+1}^{\infty} A_k$. 
   
   Let $\varepsilon >0$ be given. Find $N$ such that, for all $m\ge N$, $|R_m| < \frac{1}{6}\varepsilon$.  
   
   Note: for each $n$, $\lim_{h\rightarrow 0} \frac{\sin nh}{nh} = 1$ and find  $h_0$ such that, for each $h<h_0$, $|\sum_{n=0}^{n=N}A_n - \sum_{n=0}^{n=N}A_n \bigl(\frac{\sin nh}{nh}\bigr)^2
   |<\frac{1}{6}\varepsilon$. 
   
   Note:  for each $n$, $R_n - R_{n+1} = A_{n+1}$, and, for each $m$,  \\$S^{m+1}_h:=\sum_{n=m+1}^\infty A_ n\bigl(\frac{\sin nh}{n h}\bigr)^2 = \sum_{n=m}^\infty (R_n-R_{n+1})\bigl(\frac{\sin nh}{n h}\bigr)^2=\\
    R_m\bigl(\frac{\sin mh}{m h}\bigr)^2 +\sum_{n=m+1}^\infty R_n\bigl((\frac{\sin (n+1)h}{(n+1)h})^2-(\frac{\sin nh}{nh})^2  \bigr)$.

   Note:   the function $x \mapsto \frac{\sin(x)}{x}$ is strictly decreasing on the interval $(0, 4]$ and bounded by $1=\lim_{x\rightarrow 0}\frac{sin x}{x}$.
   
   Define $h_1 = \min(h_0, \frac{1}{2}, \frac{3}{N})$. Let $h$ be given such that $0<h<h_1$. Find $M$ in $\mathbb{N}$ such that  $3<Mh <4$. Note: $Nh< Nh_1 \le 3$ and $Mh>3$ and, therefore:  $M>N$. Also:  for all $n$, if  $n<M-1$, then $0<(n+1)h<4$ and  $\frac{\sin nh}{nh} > \frac{\sin (n+1)h}{(n+1)h}$ and $\bigl(\frac{\sin nh}{nh}\bigr)^2 > \bigl(\frac{\sin (n+1)h}{(n+1)h}\bigr)^2$. 
  Conclude: $|\sum_{n= N+1}^{M-1} R_n\bigl((\frac{\sin (n+1)h}{(n+1)h})^2-(\frac{\sin nh}{nh})^2  \bigr)|\le \frac{1}{6}\varepsilon\sum_{n=N+1}^{M-1}\bigl((\frac{\sin nh}{nh})^2-(\frac{\sin (n+1)h}{(n+1)h})^2\bigr)= \frac{1}{6}\varepsilon\bigl((\frac{\sin(N+1)h}{(N+1)h})^2-(\frac{\sin Mh}{Mh})^2\bigr) \le \frac{2}{6}\varepsilon$.
   
  Observe: for each $n$, $|\bigl(\frac{\sin (n+1)h}{(n+1)h}\bigr)^2-\bigl(\frac{\sin nh}{nh}\bigr)^2|\le|\bigl(\frac{\sin (n+1)h}{(n+1)h}\bigr)^2-\bigl(\frac{\sin(n+1)h}{nh}\bigr)^2|+|\bigl(\frac{\sin(n+1)h}{nh}\bigr)^2-\bigl(\frac{\sin nh}{nh}\bigr)^2|\le\frac{1}{h^2}\bigl((\frac{1}{n^2}-\frac{1}{(n+1)^2})+\frac{2h}{n^2}\bigr)$. 
  
  (We are using: for all $x$, $|(\sin (n+1)h)^2 - (\sin nh)^2| = \\|\sin (n+1)h + \sin nh|\cdot|\sin (n+1)h - \sin nh|\le 2h$.) 
  
  Conclude: $\sum_{n= M}^\infty |(\frac{\sin (n+1)h}{(n+1)h})^2-(\frac{\sin nh}{nh})^2  |\le \frac{1}{h^2}(\frac{1}{M^2}+\frac{2h}{M-1})$.  
  
  (We are using: $\sum_{n=M}^\infty \frac{1}{n^2} \le \sum_{n=M}^\infty \frac{1}{n(n-1)}\le\sum_{n=M}^\infty \frac{1}{n-1}-\frac{1}{n} = \frac{1}{M-1}$.)
  
  Conclude: $|\sum_{n= M}^\infty R_n\bigl((\frac{\sin (n+1)h}{(n+1)h})^2-(\frac{\sin nh}{nh})^2  \bigr)|\le  \frac{1}{6}\varepsilon \cdot\frac{1}{h^2}(\frac{1}{M^2}+\frac{2h}{M-1}) =\\ \frac{1}{6}\varepsilon\bigl(\frac{1}{(Mh)^2}+ \frac{2}{(M-1)h}\bigr)\le \frac{1}{6}\varepsilon(\frac{1}{9}+\frac{4}{5})<\frac{1}{6}\varepsilon$.
   
   (We are using: $0<h<\frac{1}{2}$ and $3<Mh<4$, and, therefore: $2\frac{1}{2}<(M-1)h<3\frac{1}{2}$ and $\frac{4}{7}<\frac{2}{(M-1)h}<\frac{4}{5}$.)
   
  Conclude: $|S^0_h - F(x)| \le  |\sum_{n=0}^{n=N}A_n - \sum_{n=0}^{n=N}A_n \bigl(\frac{\sin nh}{nh}\bigr)^2
   |+ |R_N| + |R_N \bigl(\frac{\sin Nh}{Nh}\bigr)^2|+\\ |\sum_{n= N+1}^{M-1}R_n\bigl((\frac{\sin (n+1)h}{(n+1)h})^2-(\frac{\sin nh}{nh})^2  \bigr)| +|\sum_{n=M}^\infty R_n\bigl((\frac{\sin (n+1)h}{(n+1)h})^2-(\frac{\sin nh}{nh})^2  \bigr)|<\frac{6}{6}\varepsilon=\varepsilon$.

   \smallskip
   We thus see: $\forall h<h_1[|S^0_h - F(x) |<\varepsilon]$ and:
   $\forall \varepsilon>0 \exists k\forall h<k[|S^0_h - F(x) |<\varepsilon]$, that is: $D^2G(x) = \lim_{h\rightarrow 0}S^0_h=F(x)$.
    \subsection{Riemann's second result} We 
   prove, for the function $G$ introduced in Section \ref{S:riemann1}:   \textit{for every $x $ in  $(-\pi, \pi)$, $ D^1G(x)=0$.}
   
   \smallskip
    Let $x$ in $(-\pi, \pi)$ be given.  Define $A_0 := \frac{1}{2}b_0$ and, for each $n>0$, $A_n :
   = a_0\sin nx + b_n \cos nx$.  Riemann's assumption implies: $\lim_{n\rightarrow \infty}A_n = 0$.  Therefore, the infinite sequence  $A_0, A_1, \ldots$ is bounded and, for all $h\neq 0$, 
    $S^0_h:=\frac{G(x+h) + G(x-h) -2G(x)}{h^2}=                                                                                                                                                                                                                                                                                                                                                                                                                                                                                                                                                                                                                                                                                                                                                                                                                                                                                                                                                                                                                                                                                                                                                                                                                                                                                                                                                                                                                                                                                                                                                                                                                                                                                                                                                                                                                                                                                                                                                                                                                                                                                                                                                                                                                                                                                                                                                                                                                                                                                                                                                                                                                                                                                                                                                                                                                                                                                                                                                                                                                              \sum A_ n\bigl(\frac{\sin nh}{n h}\bigr)^2$ converges. Note: $D^1G(x) = \lim_{h\rightarrow 0} hS^0_h$. 
    
   Let $\varepsilon>0$ be given.  Find $N$ such that, for all $n\ge N$, $|A_n |<\frac{1}{5}\varepsilon$.  Define $Q:=  \sum_{n=1}^N |A_n|$.  
  Let $h$ be given such that $0<h<\frac{1}{2}$ and $h< \frac{3}{N}$. Find $M$ such that $3< Mh < 4$.    Note: $M>N$, and:
   $|S_h^0|\le  \sum_{n=0}^N|A_n\bigl(\frac{\sin nh}{n h}\bigr)^2| + \sum_{n=N+1}^M|A_n\bigl(\frac{\sin nh}{n h}\bigr)^2|+\sum_{n=M+1}^\infty|A_n\bigl(\frac{\sin nh}{n h}\bigr)^2|\le Q + M \frac{1}{5}\varepsilon+ \sum_{n=M+1}^\infty |A_n|\bigl(\frac{1}{n h}\bigr)^2\le Q + M \frac{1}{5}\varepsilon+ \frac{1}{h^2}\frac{1}{5}\varepsilon\sum_{n=M+1}^\infty\frac{1}{n^2}\le Q + M \frac{1}{5}\varepsilon+\frac{1}{M}\frac{1}{5h^2}\varepsilon\le Q + \frac{4}{h}\frac{1}{5}\varepsilon + \frac{1}{3h}\frac{1}{5} \varepsilon$, and $|hS_h^0|\le Qh +\frac{13}{15}\varepsilon$. Conclude: for all $h$, if $0<h<\frac{2}{15}\frac{\varepsilon}{Q}$, then $|hS^0_h| < \varepsilon$. Conclude: $D^1G(x)=\lim_{h \rightarrow 0} hS^0_h = 0$.
  \section{Cantor's Uniqueness Theorem}
 \subsection{An  intuitionistic proof of the Cantor-Schwarz-lemma} 
  Cantor, after studying Riemann's \textit{Habilitationsschrift} \cite{riemann67}, tried to prove: for every function $F$ from $[-\pi,\pi]$ to $\mathcal{R}$ there exists \textit{at most one} infinite sequence of reals $b_0, a_0, b_1, \ldots$  such that, for all $x$ in $[-\pi, \pi]$, $F(x) =\frac{b_0}{2} +\sum_{n>0} a_n \sin nx + b_n \cos nx.$
  
  He quickly saw that this statement is equivalent to the statement: for every infinite sequence of reals $b_0, a_0, b_1, \ldots$, if,  for all $x$ in $[-\pi, \pi]$, $0 =\frac{b_0}{2} +\sum_{n>0} a_n \sin nx + b_n \cos nx$, then $b_0=0$ and, for all $n>0$, $a_n = b_n =0$.
  
  Cantor needed the following lemma. The proof is due to 
   H.A.~Schwarz, see \cite{cantor70}. The intuitionistic proof we give now is a nontrivial elaboration of the argument given by Schwarz and uses the \textit{Fan Theorem}. We found some  inspiration for this proof in the classical proof of a Lemma we shall consider later in this paper, Lemma \ref{L:csby}.
  
   The Fan Theorem appears in \cite{brouwer24} and \cite{brouwer27}. We now give a brief exposition.
   
   We let $Bin$ denote the set of all finite  sequences $a$ such that
   $\forall i<\mathit{length}(a)[a(i)=0 \;\vee \; a(i)=1]$. The elements of $Bin$ are called finite \textit{binary} sequences. Cantor space $\mathcal{C}$ is the set of all infinite sequences $\alpha$ such that $\forall i[\alpha(i) =0\;\vee\;\alpha(i) = 1]$.  For all $\alpha$ in $\mathcal{N}$, for all $n$, we define: $\overline \alpha n :=\bigl(\alpha(0), \alpha(1),  \ldots \alpha(n-1)\bigr)$, the \textit{initial part} of $\alpha$ of length $n$. A subset $B$ of $Bin$ is called a \textit{bar in $\mathcal{C}$} if and only if $\forall \alpha \in \mathcal{C}\exists n[\overline \alpha n \in B]$. The \textit{(unrestricted)}\footnote{One sometimes \textit{restricts} the Fan Theorem to \textit{decidable} subsets of $\mathit{Bin}$. Brouwer's argument seems to establish the unrestricted version, and, in any case, the unrestricted version follows from the restricted version with the help of a strong form of  Brouwer's Continuity Principle, see Section \ref{S:bcp}.} Fan Theorem is the statement that every subset of $Bin$ that is a bar in $\mathcal{C}$ has a finite subset that is a bar in $\mathcal{C}$. The argument Brouwer gives   is philosophical rather than mathematical.
  
    The Fan Theorem fails to be true in \textit{recursive} or \textit{computable} mathematics, see \cite{veldman11}.
   \begin{lemma}[Cantor-Schwarz]\label{L:cs} Let $a<b$ in $\mathcal{R}$ be given and let $G$ be a continuous function from $[a,b]$ to $\mathcal{R}$ such that $G(a) = G(b) =0$ and,  for all $x$ in $(a,b)$, $D^2G(x)$ exists. \begin{enumerate}[\upshape (i)] \item For each $\varepsilon >0$, if $\exists x \in [a,b][G(x)=\varepsilon]$, then $\exists z \in(a,b)[D^2G(z)\le -2\varepsilon]$, and,  \item if $\forall x \in (a,b)[D^2G(x) = 0]$, then $\forall x \in [a,b][G(x) = 0]$. \end{enumerate} \end{lemma}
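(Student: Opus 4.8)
The plan is to prove (i) first and then obtain (ii) by a short contradiction argument. For (i), suppose $\varepsilon>0$ and $x_0\in[a,b]$ with $G(x_0)=\varepsilon$; since $G(a)=G(b)=0$ we have $a<x_0<b$. The classical idea of Schwarz is to tilt $G$ by a parabola: for a parameter $\delta>0$ consider $H_\delta(x):=G(x)+\delta\bigl((x-x_0)^2-R^2\bigr)$ on a suitable closed subinterval $[x_0-R,x_0+R]\subseteq(a,b)$, or, more simply, work on all of $[a,b]$ with $H_\delta(x):=G(x)-\tfrac12\varepsilon\cdot\tfrac{(x-a)(b-x)}{(x_0-a)(b-x_0)}\cdot c$ type corrections; the effect is that $H_\delta$ is still continuous, still satisfies $D^2H_\delta(x)=D^2G(x)+2\delta$ at every $x\in(a,b)$ where $D^2G(x)$ exists (using the additivity of $D^2$ recorded before Lemma \ref{L:basic} and $D^2$ of a quadratic being its ordinary second derivative), and $H_\delta(x_0)$ stays close to $\varepsilon$ while $H_\delta$ is made negative near the endpoints. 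One then wants a point $z$ where $H_\delta$ attains a value $\ge H_\delta(x_0)>0$ that is not an endpoint, and applies Lemma \ref{L:basic}(ii) at that point to conclude $D^2H_\delta(z)\le0$, hence $D^2G(z)\le-2\delta$.

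The genuinely intuitionistic difficulty is that we cannot simply invoke ``$H_\delta$ attains its maximum'': a continuous function on $[a,b]$ need not attain its supremum constructively, and even if it did we could not decide whether the maximizer is interior. This is exactly where the Fan Theorem enters, as the excerpt announces. The strategy I would follow is: pick a rational $q$ with $0<q<\varepsilon$, and consider, for a binary-coded dyadic search over $[a,b]$, the set $B$ of finite binary sequences $s$ such that the dyadic subinterval $I_s$ coded by $s$ is contained in $(a,b)$ and has $D^2G(z)\le -q$ for\,---\,or rather, \emph{witnessed by}\,---\,some explicitly produced $z\in I_s$, together with the sequences $s$ for which $I_s$ lies in a region where $G<q$ everywhere. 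Using $G(a)=G(b)=0$, continuity of $G$, and $G(x_0)=\varepsilon>q$, one argues that along \emph{every} branch of the binary tree one eventually lands in $B$: either the interval shrinks into the ``$G<q$'' region, or, tracking a point where $G$ is large against the endpoint values where $G$ is small, Riemann's symmetric-derivative estimate via Lemma \ref{L:basic}(i) forces a witness $z$ with $D^2G(z)$ strictly negative and quantitatively below $-2\varepsilon$ up to the slack we built in. Hence $B$ is a bar in (a homeomorphic copy of) $\mathcal{C}$; the Fan Theorem yields a finite subbar, and from the finitely many intervals, one of which must contain $x_0$ by the pigeonhole on where $x_0$ sits, we extract the desired $z\in(a,b)$ with $D^2G(z)\le-2\varepsilon$. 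The bookkeeping that makes the bound come out as $-2\varepsilon$ rather than merely ``$<0$'' is the reason for introducing the auxiliary parabola $\delta((x-x_0)^2-\cdots)$ and letting the constructed slack $q$ absorb the parabolic correction; I would set the parameters at the end.

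Part (ii) is then immediate by contraposition in the style already used for Lemma \ref{L:basic}: assume $\forall x\in(a,b)[D^2G(x)=0]$ and suppose, for contradiction toward a positivity statement, that $G(y)\,\#\,0$ for some $y\in[a,b]$. If $G(y)>0$, put $\varepsilon:=G(y)>0$; by co-transitivity we may in fact find a rational $\varepsilon'$ with $0<\varepsilon'<G(y)$ and, by the intermediate-value behaviour of the continuous function $G$ on $[a,y]$ (using $G(a)=0$), a point $x$ with $G(x)=\varepsilon'$; then (i) delivers $z\in(a,b)$ with $D^2G(z)\le-2\varepsilon'<0$, contradicting $D^2G(z)=0$. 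The case $G(y)<0$ is handled symmetrically by applying the same reasoning to $-G$, which satisfies the hypotheses of (i) as well. Since $G(y)\,\#\,0$ is thereby refuted for every $y\in[a,b]$, we conclude $\forall x\in[a,b][G(x)=_\mathcal{R}0]$, as this is the negative statement $\neg(G(x)\,\#\,0)$. The main obstacle, as indicated, is the constructive replacement of the classical ``choose a maximum point'' step by a Fan-Theorem compactness argument with careful enough quantitative control to land on the constant $-2\varepsilon$; everything else is routine manipulation of the already-established additivity and quadratic-case facts for $D^2$.
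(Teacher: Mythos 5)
Your diagnosis of the constructive difficulty is right, and you correctly locate the two main ingredients the paper uses (a quadratic perturbation of $G$, and the Fan Theorem). But the proposed bar argument does not actually resolve the difficulty you identified, and this is a genuine gap.

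The problem is in the claim that your $B$ is a bar. Fix a branch $\alpha$ and let $p$ be the point to which the dyadic intervals $I_{\overline\alpha n}$ shrink. If $G(p)<q$ you can indeed detect this after finitely many approximations and enter $B$ via the ``$G<q$ everywhere on $I_s$'' clause. But if $p$ is (close to) the point where $H_\delta$ takes its supremum, you have no constructive way to produce a witness $z\in I_{\overline\alpha n}$ with $D^2G(z)\le -q$. Classically such a $z$ exists because the maximizer $z_{\max}$ of $H_\delta$ lies there and $D^2H_\delta(z_{\max})\le 0$; but \emph{locating} $z_{\max}$ is exactly the thing you cannot do, and your appeal to Lemma~\ref{L:basic}(i) goes in the wrong direction: it converts $D^2H(z)>0$ into $\exists y[H(y)>H(z)]$, i.e.\ it tells you a given point is \emph{not} a max, rather than producing a point where $D^2\le 0$. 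So proving $\forall\alpha\exists n[\overline\alpha n\in B]$ already presupposes the kind of pointwise information the Fan Theorem is supposed to deliver, and the argument is circular. (The Fan Theorem converts a bar into a finite subbar; it does not by itself certify that something is a bar.)

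What the paper does to escape this is not a bar argument over dyadic intervals, but the introduction of a second, \emph{linear} perturbation parameter: for $\rho\in[0,\tfrac12\varepsilon]$ it considers $H_\rho(y)=H(y)+\rho\,\frac{y-a}{b-a}$ (on top of the quadratic tilt $H(y)=G(y)-\varepsilon\frac{(b-y)(y-a)}{(b-a)^2}$, which plays the role of your $H_\delta$). Adding a linear function does not change $D^2$, but it shifts the supremum: if $\rho_0<\rho_1$, then $\sup_{[z_1,b]}H_{\rho_1}-\sup_{[a,z_0]}H_{\rho_1}$ exceeds the corresponding difference for $\rho_0$ by at least $(z_1-z_0)(\rho_1-\rho_0)$. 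This produces a genuine numerical gap that lets one \emph{decide} a disjunction (``the sup over the whole interval lives in the left two-thirds for all $\rho$ near $\rho_0$'' or ``\ldots in the right two-thirds for all $\rho$ near $\rho_1$''), and hence to run a coordinated trisection of the $z$-interval $[a_n,b_n]$ and the $\rho$-interval $[c_n,d_n]$ with controlled slack $\delta_n$. The nested intervals yield a $\rho$ and a $z$ such that $H_\rho$ is \emph{strictly} maximal at $z$ in the apartness sense; then Lemma~\ref{L:basic}(ii) gives $D^2H_\rho(z)\le 0$, i.e.\ $D^2G(z)\le -2\varepsilon$. The Fan Theorem only enters indirectly, to guarantee the existence of $\sup_{[a,b]}f$ for continuous $f$. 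Your scheme has no analogue of the parameter $\rho$ and hence no mechanism to force the decisions needed to close in on an interior strict maximizer.

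(The paper also offers a second, shorter proof via a negative Weierstrass-type theorem --- $\neg\forall z\exists y[H(z)<H(y)]$, obtained from Heine--Borel and a finite directed-graph argument --- which is closer in spirit to a compactness/covering argument, but it only yields part (ii), not the quantitative conclusion of part (i) that you are aiming for.)

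Your treatment of part (ii) from part (i) is essentially correct, modulo a small wrinkle: rather than going through the intermediate value behaviour of $G$, the paper simply applies (i) to $\varepsilon$ and, via $-G$, to $-\varepsilon$, and concludes $\forall\varepsilon>0\,\neg\exists x[G(x)=\varepsilon\vee G(x)=-\varepsilon]$, which is $\forall x[G(x)=0]$ by the negativity of $=_\mathcal{R}$.
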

   \begin{proof}(i) Assume we find $x$ in $[a,b]$ such that $G(x) =\varepsilon>0$. Let $H$ be the function from $[a,b]$ to $\mathcal{R}$ such that, for all $y$ in $[a,b]$, $H(y)= G(y) -\varepsilon\frac{(b-y)(y-a)}{(b-a)^2}$. Note: $H(x) \ge \frac{3}{4}\varepsilon$ and, for all $y$ in $(a,b)$, $D^2H(y)$ exists and $D^2H(y) = D^2G(y) +2\varepsilon$. 
   
   Cantor, upon the advice of Schwarz, now used  Weierstrass's result that a continuous function defined on a closed interval assumes at some point its greatest value and produced $z$ in $[a,b]$ such that, for all $y$ in $[a,b]$, $H(y) \le H(z)$. The constructive mathematician knows that such a $z$ may not always be found and  has to adapt the  argument.
   
   We use the fact that, for all $a,b$ in $\mathcal{R}$ such that $a<b$, for every continuous function $f$ from $[a,b]$ to $\mathcal{R}$, one may find $s$ in $\mathcal{R}$ such that (i) for all $y$ in $[a,b]$, $f(y) \le s$, and (ii) for every $\varepsilon >0$  there exists $y$ in $[a,b]$ such that $f(y) > s-\varepsilon$. This number $s$ is the \textit{supremum} or \textit{least upper bound} of the set $\{f(y)|y\in [a,b]\}$, notation: $\sup_{[a,b]}(f)$. The fact that  this number exists is a consequence of the (restricted) Fan Theorem, see \cite{veldman11}.
   
   For each real number $\rho$ we 
   let $H_\rho$ be the function from $[a,b]$ to $\mathcal{R}$ such that, for all $y$ in $[a,b]$, $H_\rho(y) = H(y) + \rho \frac{y-a}{b-a}$.  Note: for each $\rho$, for all $y$ in $(a,b)$, $D^2H_\rho(y)$ exists  and $D^2H_\rho(y) =D^2H(y) = D^2G(y) + 2\varepsilon$. Also note: for all $\rho$ in $[0, \frac{1}{2}\varepsilon]$, $H_\rho(x) =H(x) +\rho\frac{x-a}{b-a}\ge \frac{3}{4}\varepsilon >0= H_\rho(a)$ and $H_\rho(x)\ge\frac{3}{4}\varepsilon > \rho=H_\rho(b)$. 
   
   We now simultaneously construct $\rho$ in $[0, \frac{1}{2}\varepsilon]$ and $z$ in $[a,b]$ such that, for all $y$ in $[a,b]$,  if $y \;\#_\mathcal{R}\; z$, then $H_\rho(y) < H_\rho(z)$.

   To this end, we define two infinite sequences of pairs of reals, $(a_0, b_0), (a_1, b_1), \ldots$ and $(c_0, d_0), (c_1, d_1), \ldots$ and an infinite sequence $\delta_0, \delta_1,\ldots$ of reals such that 
 $(a_0 ,b_0) =(a, b)$  and  $(c_0 ,d_0) = (0,\frac{1}{2}\varepsilon)$, and, for each $n$, \begin{enumerate}[\upshape (i)]   
\item  $a_n\le a_{n+1}<b_{n+1}\le b_n$ and  $c_n\le c_{n+1}<d_{n+1}\le d_n$, and, \item  $\delta_n=\frac{1}{81}(b_n-a_n)(d_n-c_n)$, and, \item for each $\rho$ in $[c_{n+1}, d_{n+1}]$, $\sup_{[a_{n+1}, b_{n+1}]}H_\rho = \sup_{[a_{n}, b_{n}]}H_\rho$  and, 
 for each $y$ in $[a,b]$, if $y \notin [a_{n+1}, b_{n+1}]$, then $H_\rho(y) +\delta_n\le \sup_{[a_{n+1}, b_{n+1}]}H_\rho $.
\end{enumerate}

 We do so as follows. Suppose: $n \in \mathbb{N}$ and we defined $a_n, b_n, c_n, d_n$.  Define $z_0:= \frac{2}{3}a_n + \frac{1}{3}b_n$ and $z_1:= \frac{1}{3}a_n + \frac{2}{3}b_n$ and   consider $[a_n, z_0]$ and $[z_1, b_n]$, the \textit{left third part} and the \textit{right third part} of $[a,b]$. Define $\rho_0 := \frac{2}{3} c_n + \frac{1}{3}d_n$ and $\rho_1 := \frac{1}{3} c_n + \frac{2}{3}d_n$. 
 
  Note: $\rho_0<\rho_1$, and,    for all $x$ in $[a,b]$, $H_{\rho_0}(x) +(\rho_1-\rho_0)x = H_{\rho_1}(x)$, and, therefore, $$\sup_{[a_n, z_0]}(H_{\rho_1}) \le \sup_{[a_n, z_0]}(H_{\rho_0})+ z_0(\rho_1 -\rho_0),$$ and also: $$\sup_{[z_1, b_n]}(H_{\rho_1}) \ge \sup_{[z_1, b_n]}(H_{\rho_0})+z_1(\rho_1-\rho_0).$$ Conclude:  $$\sup_{[z_1, b_n]}(H_{\rho_1})-\sup_{[a_n, z_0]}(H_{\rho_1}) \ge \sup_{[z_1, b_n]}(H_{\rho_0})-\sup_{[a_n, z_0]}(H_{\rho_0})+(z_1-z_0)(\rho_1 -\rho_0).$$ Define $\delta_n := \frac{1}{9}(z_1-z_0)(\rho_1-\rho_0)$. Then: $$\bigl(\sup_{[z_1, b_n]}(H_{\rho_1})-\sup_{[a_n, z_0]}(H_{\rho_1})\bigr)+ \bigl( \sup_{[a_n, z_0]}(H_{\rho_0})-\sup_{[z_1, b_n]}(H_{\rho_0})\bigr)> 6\delta_n,$$ and, therefore, either: $$\sup_{[z_1, b_n]}(H_{\rho_1})-\sup_{[a_n, z_0]}(H_{\rho_1})>3\delta_n$$ or: $$\sup_{[a_n,z_0]}(H_{\rho_0})-\sup_{[z_1,b_n]}(H_{\rho_0})>3\delta_n.$$ We thus have two cases.
 
   \textit{Case (i)}. $\sup_{[z_1, b_n]}(H_{\rho_1})>\sup_{[a_n, z_0]}(H_{\rho_1})+3\delta_n$. Define $(a_{n+1} ,b_{n+1}) :=(z_0, b_n)$ and $(c_{n+1} ,d_{n+1}) := \bigl(\sup(\rho_0,\rho_1 -\delta_n), \inf(d_n, \rho_1 +\delta_n)\bigr)$. 
   
   Note: for each $\rho$ in  $[c_{n+1}, d_{n+1}]$,  $\forall y \in [a,b][H_{\rho_1}(y) - H_\rho(y)|\le |\rho_1 -\rho|\frac{y-a}{b-a}]$ and $|\sup_{[z_1, b_n]}(H_{\rho_1})-\sup_{[z_1, b_n]}(H_{\rho})|\le\delta_n$ and $|\sup_{[a_n, z_0]}(H_{\rho_1})-\sup_{[a_n, z_0]}(H_{\rho})|\le\delta_n$, and, therefore: $\sup_{[z_1, b_n]}(H_{\rho})>\sup_{[a_n, z_0]}(H_{\rho})+\delta_n$ and: $\sup_{[a_{n+1}, b_{n+1}]}H_\rho =\sup_{[z_0, b_n]}H_\rho = \sup_{[a_{n}, b_{n}]}H_\rho$.
   
   Also note: for each $\rho$ in  $[c_{n+1}, d_{n+1}]$, for each $y$ in $[a,b]$, if $y \notin [a_{n+1}, b_{n+1}]$, then $\exists z \in [a_{n+1}, b_{n+1}][H_\rho(y) +\delta_n< H_\rho(z)]$.
   \smallskip
   
   \textit{Case (ii)}. $\sup_{[z_1, b_n]}(H_{\rho_0})+3\delta_n <\sup_{[a_n, z_0]}(H_{\rho_0})$. Define $(a_{n+1} ,b_{n+1}) :=(a_n,z_1)$ and $(c_{n+1} ,d_{n+1}) := \bigl(\sup(c_n,\rho_0 -\delta_n), \inf(\rho_1, \rho_0 +\delta_n)\bigr)$. 
   
    Note: for each $\rho$ in  $[c_{n+1}, d_{n+1}]$, $\forall y \in [a,b][H_{\rho_0}(y) - H_\rho(y)|\le |\rho_0 -\rho|\frac{y-a}{b-a}]$ and $|\sup_{[z_1, b_n]}(H_{\rho_0})-\sup_{[z_1, b_n]}(H_{\rho})|\le\delta_n$ and $|\sup_{[a_n, z_0]}(H_{\rho_0})-\sup_{[a_n, z_0]}(H_{\rho})|\le\delta_n$, and, therefore: $\sup_{[z_1, b_n]}(H_{\rho})+\delta_n<\sup_{[a_n, z_0]}(H_{\rho})$ and: $\sup_{[a_{n+1}, b_{n+1}]}H_\rho =\sup_{[a_n,z_1]}H_\rho = \sup_{[a_{n}, b_{n}]}H_\rho$.
    
     Again: for each $\rho$ in  $[c_{n+1}, d_{n+1}]$, for each $y$ in $[a,b]$, if $y \notin [a_{n+1}, b_{n+1}]$, then $\exists z \in [a_{n+1}, b_{n+1}][H_\rho(y)+\delta_n < H_\rho(z)]$.
    \smallskip
   
   Note that  \textit{Case (i)} does not exclude case \textit{Case (ii)}. We have to make  choices. One may provide details as to how to make these choices in terms of $a, b$ and $\varepsilon$, real numbers that are given to us as infinite sequences of rational approximations. There is no need to apply an Axiom of Countable Choice.

   This completes the description of our construction. Note: for each $n$, $b_{n+1} -a_{n+1} = \frac{2}{3}(b_n -a_n)$ and $d_{n+1} -c_{n+1} \le \frac{2}{3}(d_n -c_n)$. Applying the Cantor Intersection Theorem, we find $\rho,z$ such that, for all $n$, $c_n \le \rho \le d_n$ and  $a_n \le z \le b_n$.   Assume: $y \in [a,b]$ and $y \;\#_\mathcal{R} \; z$.  Find $n$ such that $y \notin[a_n, b_n]$ and conclude: $H_\rho(y) +\delta_n \le H_\rho(z)$. The function $H_\rho$ thus assumes its greatest value at $z$.
 
 As we observed earlier,   $ H_\rho(a) < H_\rho(x) \le H_\rho(z)$ and  also $H_\rho(b) < H_\rho(x) \le H_\rho(z)$, and, therefore\footnote{For this step, see Lemma \ref{L:helphelpcsby}(i).}:  $a<z<b$ and $D^2H_\rho(z)$ exists.  Use Lemma \ref{L:basic}(ii) and conclude:  $D^2(H_\rho)(z) \le 0$, and:  $D^2G(z) =D^2(H_\rho)(z)-2\varepsilon\le -2\varepsilon$. 
  
   \smallskip (ii) Consider the function $-G$. Applying (i),   conclude:   $$\forall\varepsilon >0[\exists x \in [a,b][G(x)=-\varepsilon] \rightarrow\exists z \in(a,b)[D^2G(z) \ge 2\varepsilon]].$$ Assume $\forall x \in (a,b)[D^2G(x) = 0]$, and conclude $$\forall \varepsilon>0\neg\exists x \in [a,b][G(x) =\varepsilon \;\vee\;G(x) = -\varepsilon]],$$ that is:  $\forall x \in [a,b][G(x) = 0]$.\end{proof}
   \begin{corollary}\label{cor:cs} Let $a,b$ be real numbers such that $a<b$  and let $G$ be a function from $[a,b]$ to $\mathcal{R}$ such that,  for all $x$ in $(a,b)$, $D^2G(x)=0$. Then $G$ is linear on $[a,b]$, that is: for all $x$ in $[a,b]$, $G(x)=G(a) +\frac{x-a}{b-a}\bigl(G(b)-G(a)\bigr)$. \end{corollary}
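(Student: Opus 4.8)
The plan is to subtract off the affine interpolant of $G$ at the endpoints and then quote Lemma~\ref{L:cs}(ii). Concretely, let $L$ be the function from $[a,b]$ to $\mathcal{R}$ given by $L(x) = G(a) + \frac{x-a}{b-a}\bigl(G(b)-G(a)\bigr)$, and put $\widetilde G := G - L$. The statement to be proved is precisely $\forall x \in [a,b][\widetilde G(x) = 0]$, so it suffices to verify that $\widetilde G$ meets the hypotheses of Lemma~\ref{L:cs}(ii).

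First I would record that $L$ is a polynomial of degree at most $1$, hence twice differentiable with $L''(x)=0$ for every $x$; by the observation in Section~\ref{S:riemann1} that $D^2H(x)=H''(x)$ whenever $H''(x)$ exists, this gives $D^2L(x)=0$ for all $x$ in $(a,b)$. Next, using the additivity of the operator $D^2$ also noted in Section~\ref{S:riemann1} (applied to $G$ and $-L$), $D^2\widetilde G(x)$ exists and equals $D^2G(x) - D^2L(x) = 0 - 0 = 0$ for every $x$ in $(a,b)$. Since $G$ and $L$ are continuous, so is $\widetilde G$, and by construction $\widetilde G(a) = G(a) - L(a) = 0$ and $\widetilde G(b) = G(b) - L(b) = 0$. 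Thus $\widetilde G$ satisfies the hypotheses of Lemma~\ref{L:cs}, and part~(ii) of that lemma yields $\widetilde G(x) = 0$, i.e.\ $G(x) = L(x) = G(a) + \frac{x-a}{b-a}\bigl(G(b)-G(a)\bigr)$, for all $x$ in $[a,b]$.

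There is essentially no serious obstacle here: the argument is a routine reduction, and every ingredient — additivity of $D^2$, the fact that ordinary second differentiability implies the existence and agreement of $D^2$, and Lemma~\ref{L:cs}(ii) itself — is already in hand. The one point worth a word of care is that Lemma~\ref{L:cs} is phrased for continuous functions, so one should make sure $\widetilde G$ is continuous; this is immediate from the continuity of $G$ and of the affine map $L$. Note also that at this level no appeal to an intermediate value theorem or to the attainment of a maximum is needed: all of that effort has been absorbed into the proof of Lemma~\ref{L:cs}.
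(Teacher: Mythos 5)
Your proof is correct and is essentially identical to the paper's: the paper also subtracts the affine interpolant, defining $G^\ast(x) := G(x) - G(a) - \frac{x-a}{b-a}\bigl(G(b)-G(a)\bigr)$, and then applies Lemma~\ref{L:cs}(ii). You simply spell out the routine verifications (additivity of $D^2$, vanishing at the endpoints, continuity) that the paper leaves implicit.
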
 
  
   \begin{proof} Define, for each $x$ in $[a,b]$:  $$G^\ast(x):= G(x)-G(a) -\frac{x-a}{b-a}\bigl(G(b)-G(a)\bigr)$$ and conclude, using Lemma \ref{L:cs}(ii): for all $x$ in $[a,b]$, $G^\ast(x) = 0$. \end{proof}   
\subsection{A second proof of the Cantor-Schwarz-Lemma}\subsubsection{}\label{SSS:hb} Let $a,b$ in $\mathcal{R}$ be given such that $a<b$. We define a function $D$ associating to every $s$ in $Bin$ a pair $D(s) =\bigl(D_0(s), D_1(s)\bigr)$ of real numbers, such that 
    $D(( \;)) := (a,b)$ and, for all $s \in Bin$,  $D(s\ast( 0))=\bigl(D_0(s), \frac{1}{3}D_0(s) + \frac{2}{3}D_1(s)\bigr)$ and $D(s\ast( 1))=\bigl(\frac{2}{3}D_0(s) + \frac{1}{3}D_1(s), D_1(s)\bigr)$.  
 Let $\phi$ be a function from Cantor space $\mathcal{C}$ to $[a,b]$ such that, for every $\alpha$ in $\mathcal{C}$, for every $n$, $D_0(\overline \alpha n)<\phi(\alpha)<D_1(\overline \alpha n)$. Note that $\phi$ is a surjective map from $\mathcal{C}$ onto $[a,b]$. Using this function and applying the unrestricted Fan Theorem, one may prove
   the following statement,   the \textit{unrestricted Heine-Borel Theorem}: \begin{quote}{\it Let $a,b$ in $\mathcal{R}$ be given such that $a<b$. Let $\mathcal{B}$ be a subset of $\mathcal{R}^2$ such that  $\forall x \in [a,b]\exists ( c, d) \in \mathcal{B}[c<x<d]$. Then  there exist $n$ in $\mathbb{N}$, $( c_0, d_0), ( c_1, d_1), \ldots, ( c_{n}, d_{n})  $ in $\mathcal{B}$ such that $\forall x \in [a,b]\exists i\le n [c_i<x<d_i]$.}\end{quote}

The proof is as follows. Let $B$ be the set of all $a$ in $Bin$ such that, for some $(c,d)$ in $\mathcal{B}$, $c<D_0(a) < D_1(a)<d$. Note that $B$ is a bar in Cantor space $\mathcal{C}$. Find a finite subset of $B$ that is bar in $\mathcal{C}$ and enumerate its elements: $a_0, a_1, \ldots a_n$. Then find   $( c_0, d_0), ( c_1, d_1), \ldots, ( c_{n}, d_{n})  $ in $\mathcal{B}$ such that, for each $i\le n$, $c_i<D_0(a_i) < D_1(a_i)<d_i$. This finite sequence of elements of $\mathcal{B}$ satisfies the requirements.
 
 In \cite{veldman11}, the restricted Heine-Borel Theorem is derived from the restricted Fan theorem. 
  
  \subsubsection{} Weierstrass's Theorem, saying  that a continuous function defined on a closed interval assumes at some point its greatest value, fails constructively. We have the following negative substitute:
  
  \begin{theorem}\label{T:weierstr} Let $a<b$ in $\mathcal{R}$ be given and let $H$ be a continuous function from $[a,b]$ to $\mathcal{R}$. Then $\neg\forall z \in[a,b]\exists y \in[a,b][H(z) <H(y)]$. \end{theorem}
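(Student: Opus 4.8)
The plan is to assume $\forall z\in[a,b]\,\exists y\in[a,b][H(z)<H(y)]$ and derive a contradiction. Classically one would here invoke Weierstrass's maximum theorem: pick a point $z$ where $H$ attains its greatest value, then the hypothesis produces a point with a strictly larger value, which is absurd. That route is closed to us, since Weierstrass's theorem fails constructively. Instead I would use the \emph{unrestricted Heine-Borel Theorem} of Section~\ref{SSS:hb} to replace the (non-existent) single maximum point by a finite list of ``local peaks'' $y_0,\dots,y_n$, and then argue that $H$ is bounded on $[a,b]$ by the finite supremum $m$ of the values $H(y_0),\dots,H(y_n)$, while at the same time each $H(y_i)$ is strictly below $m$ --- which is impossible for a supremum of finitely many reals.

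In more detail, the first step is a continuity argument producing an open cover. Let $z$ in $[a,b]$ be given and let $y$ in $[a,b]$ satisfy $H(z)<H(y)$. From $H(z)<H(y)$ one reads off rationals $p<q$ with $H(z)\le p$ and $q\le H(y)$, and then continuity of $H$ at $z$, applied to $\frac{q-p}{2}$, yields $\delta>0$ such that $H(x)<H(y)$ for every $x$ in $[a,b]$ with $|x-z|<\delta$. Let $\mathcal{B}$ be the set of pairs $(c,d)$ in $\mathcal{R}^2$ for which there exists $y$ in $[a,b]$ with $\forall x\in[a,b][c<x<d\rightarrow H(x)<H(y)]$. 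The construction just described shows $\forall x\in[a,b]\,\exists(c,d)\in\mathcal{B}[c<x<d]$ (take $z:=x$).

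The second step applies the unrestricted Heine-Borel Theorem to $\mathcal{B}$, yielding finitely many $(c_0,d_0),\dots,(c_n,d_n)$ in $\mathcal{B}$ with $\forall x\in[a,b]\,\exists i\le n[c_i<x<d_i]$; for each $i\le n$ we pick a witness $y_i$ in $[a,b]$ (finitely many choices, so no choice principle is needed). Put $m:=\sup\bigl(H(y_0),\dots,H(y_n)\bigr)$. Since every $x$ in $[a,b]$ lies in some $(c_i,d_i)$, we get, for some $i\le n$, that $H(x)<H(y_i)\le m$, hence $\forall x\in[a,b][H(x)<m]$; in particular $H(y_i)<m$ for every $i\le n$.

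The last step is the delicate one, and the place where constructivity actually bites: from $H(y_i)<m$ one extracts, for each $i\le n$, a rational $s_i$ with $H(y_i)\le s_i$ and $s_i<m$; the rational $s:=\max(s_0,\dots,s_n)$ then satisfies $s<m$ while $H(y_i)\le s$ for all $i\le n$, so that $s$ is an upper bound of $\{H(y_0),\dots,H(y_n)\}$ and therefore $m\le s$ --- contradicting $s<m$. (Equivalently, one proves $\neg\,\forall i\le n[H(y_i)<\sup(H(y_0),\dots,H(y_n))]$ by induction on $n$, using that $\neg\bigl(u<\sup(u,v)\wedge v<\sup(u,v)\bigr)$.) Apart from this point --- where one must resist the temptation to ``let $i$ be such that $H(y_i)$ is largest'', which is not available since the order on $\mathcal{R}$ is undecidable --- the argument is routine once the unrestricted Heine-Borel Theorem is in hand, and I expect the only real work to be the slightly fiddly bookkeeping with rational approximations in the continuity step and in this final step.
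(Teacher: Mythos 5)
Your proof is correct, and it follows the paper's own strategy through the first two steps: a continuity argument producing an open cover, then the unrestricted Heine--Borel Theorem to extract a finite subcover with witnesses $y_0,\dots,y_n$. Where you diverge is in the final contradiction. The paper does not form a supremum at all: it defines a relation $<_\ast$ on the finite index set $\{0,\dots,n\}$ by $i <_\ast j \leftrightarrow \exists y \in (c_j,d_j)\cap[a,b]\,\forall z\in(c_i,d_i)\cap[a,b][H(z)<H(y)]$, checks it is irreflexive and transitive, observes (because the witness $y_i$ must itself lie in some $(c_j,d_j)$) that every $i$ has some $j$ with $i <_\ast j$, and concludes: a finite strict partial order in which every element has a strict successor is impossible. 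This is a purely order-combinatorial punchline that never mentions the real number $\sup_i H(y_i)$. Your route instead forms $m:=\sup(H(y_0),\dots,H(y_n))$, shows $H(y_i)<m$ for each $i$, and then contradicts the least-upper-bound property of $m$ via rational approximants; the step of passing to rationals is exactly what makes the ``pick the $i$ with largest $H(y_i)$'' move constructively licit, and you are right to flag it as the delicate point. Both finishes are sound; the paper's is a bit leaner in that it sidesteps $\sup$ and the rational-approximation bookkeeping, while yours is perhaps closer to what a classical reader would expect and makes the constructive obstruction (undecidability of $\le_\mathcal{R}$) very visible.
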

  
  \begin{proof} Assume: $\forall z \in [a,b]\exists y \in [a,b][H(z) < H(y)]$. Let $\mathcal{B}$ be the set of all  pairs $(c,d)$ of reals such that $\exists z\in(a,b)[c<z<d]$  and $\exists y \in [a,b]\forall v \in (c,d)\cap [a,b][H(v)<H(y)]$.  Assume: $z\in [a,b]$. Find $y \in [a,b]$ such that $H(z) <H(y)$. Using the continuity of $H$, find $c,d$ such that $c<z<d$ and $\forall v \in (c,d)\cap [a,b][H(v)<H(y)]$. Note: $(c,d) \in \mathcal{B}$. We thus see: $\forall z \in [a,b]\exists (c,d) \in \mathcal{B}[c<z<d]$.
  
  Applying the Heine-Borel Theorem, find  $n$ in $\mathbb{N}$, $( c_0, d_0), ( c_1, d_1), \ldots, ( c_{n}, d_{n})  $ in $\mathcal{B}$ such that $\forall z \in [a,b]\exists i\le n [c_i<z<d_i]$.
  
  Define a binary relation $<_\ast$ on the set $\{0,1,\ldots,n\}$ such that, for all $i,j\le n,$ $$i<_\ast j \leftrightarrow \exists y \in (c_j,d_j)\cap[a,b]\forall z \in(c_i,d_i)\cap[a,b][H(z) <H(y)].$$
  
  Note: $\forall i\le n[\neg(i <_\ast i)]$ and $\forall i\le n\forall j \le n\forall k \le n[(i <_\ast j \;\wedge\; j<_\ast k)\rightarrow i <_\ast k]$ and $\forall i \le n \exists j \le n[ i<_\ast j]$. 
  
  That is impossible. Conclude: $\neg\forall z \in [a,b]\exists y \in [a,b][H(z) < H(y)]$.\end{proof}
\subsubsection{} The  proof\footnote{The proof of the Cantor-Schwarz Lemma in \cite{veldman03} is wrong.} of the Cantor-Schwarz Lemma we now are to set forth is shorter than the first one but, from a constructive point of view, it is less informative, as the conclusion of Lemma \ref{L:cs}(i) is not obtained. \begin{lemma}[Cantor-Schwarz, version II]\label{L:cs2} Let $a<b$ in $\mathcal{R}$ be given and let $G$ be a continuous function from $[a,b]$ to $\mathcal{R}$ such that $G(a) = G(b) =0$. \\If $\forall x \in (a,b)[D^2G(x)= 0]$, then $\forall x \in [a,b][G(x) = 0]$.  \end{lemma}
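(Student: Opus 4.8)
The plan is to re-use the auxiliary function from the first proof of Lemma~\ref{L:cs}(i), but to replace the intricate Fan-Theorem construction that there located its supremum by a single appeal to Theorem~\ref{T:weierstr}. First, since $=_\mathcal{R}$ is a negative relation and $-G$ satisfies the same hypotheses as $G$ (indeed $D^2(-G)(x)=-D^2G(x)=0$ for each $x$ in $(a,b)$, and $(-G)(a)=(-G)(b)=0$), it suffices to prove $\forall y\in[a,b][G(y)\le_\mathcal{R}0]$, that is, $\forall y_0\in[a,b]\,\neg(0<_\mathcal{R}G(y_0))$: granting this, applying it to both $G$ and $-G$ yields $G(y)\le_\mathcal{R}0$ and $G(y)\ge_\mathcal{R}0$, hence $G(y)=_\mathcal{R}0$, for every $y$ in $[a,b]$. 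So I would fix $y_0$ in $[a,b]$, assume $0<_\mathcal{R}G(y_0)$, choose a rational $\varepsilon$ with $0<\varepsilon<_\mathcal{R}G(y_0)$, and head for a contradiction.

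Following Lemma~\ref{L:cs}(i), introduce the continuous function $H$ on $[a,b]$ defined by $H(y):=G(y)-\varepsilon\frac{(b-y)(y-a)}{(b-a)^2}$. Then $H(a)=H(b)=0$; since $\frac{(b-y_0)(y_0-a)}{(b-a)^2}\le\frac14$, one has $H(y_0)>\frac34\varepsilon>0$; and, for every $x$ in $(a,b)$, $D^2H(x)$ exists and equals $D^2G(x)+\frac{2\varepsilon}{(b-a)^2}=\frac{2\varepsilon}{(b-a)^2}>0$. The crux is the claim $\forall z\in[a,b]\exists y\in[a,b][H(z)<H(y)]$, which contradicts Theorem~\ref{T:weierstr} and so refutes the assumption $0<_\mathcal{R}G(y_0)$. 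To prove the claim, let $z$ in $[a,b]$ be given; by co-transitivity of $<_\mathcal{R}$, either $H(z)<\frac12\varepsilon$ or $\frac13\varepsilon<H(z)$. In the first case $H(z)<\frac34\varepsilon<H(y_0)$, so $y:=y_0$ does the job. In the second case $H(z)>\frac13\varepsilon>0=H(a)=H(b)$; using the continuity of $H$ at $a$ and at $b$ one concludes $a<_\mathcal{R}z$ and $z<_\mathcal{R}b$, and then Lemma~\ref{L:basic}(i), applicable because $D^2H(z)=\frac{2\varepsilon}{(b-a)^2}>0$, produces $y$ in $[a,b]$ with $H(z)<H(y)$.

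I expect the one genuinely delicate point to be the verification of the key claim for \emph{every} $z$ in $[a,b]$, in particular for those $z$ that may lie, undecidably, at or near the endpoints. Since one may not case-split on ``$z\in(a,b)$ versus $z\in\{a,b\}$'', one must extract the \emph{positive} information $a<_\mathcal{R}z<_\mathcal{R}b$ from $H(z)>\frac13\varepsilon$ together with the smallness of $H$ near $a$ and $b$; this is precisely the kind of step for which the first proof appeals to Lemma~\ref{L:helphelpcsby}(i). The rest — the elementary inequality $\frac{(b-y)(y-a)}{(b-a)^2}\le\frac14$ and the computation of $D^2H$ via the additivity of $D^2$ — is routine.
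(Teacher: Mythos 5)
Your proposal is correct and takes essentially the same route as the paper's proof of this lemma: both introduce the quadratic correction $H$, show $D^2H>0$ on $(a,b)$, verify $\forall z\in[a,b]\exists y\in[a,b][H(z)<H(y)]$ by splitting into an endpoint-neighbourhood case (where continuity of $H$ keeps $H$ small) and an interior case (where Lemma~\ref{L:basic}(i) applies), and then contradict Theorem~\ref{T:weierstr}. The only cosmetic difference is that you effect the case split via co-transitivity on the value $H(z)$ (and invoke Lemma~\ref{L:helphelpcsby}(i) to extract $a<z<b$), whereas the paper covers $[a,b]$ as $[a,a+\delta)\cup(a,b)\cup(b-\delta,b]$ for a $\delta$ obtained from continuity at the endpoints — the same idea in a slightly different dress.
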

   \begin{proof}(i) Assume we find $x$ in $[a,b]$ such that $G(x) >0$. Define a function $H$ from $[a,b]$ to $\mathcal{R}$ such that, for all $y$ in $[a,b]$, $H(y)= G(y) -G(x)\frac{(b-y)(y-a)}{(b-a)^2}$. Note: $H(x) \ge \frac{3}{4}G(x) >0$ and, for all $y$ in $(a,b)$, $D^2H(y)$ exists and $D^2H(y) = D^2G(y) +2G(x)>0$. We are going to show that every point in $[a,b]$ positively refuses to be the point where $H$ assumes its greatest value, and this, according to Theorem \ref{T:weierstr}, is impossible.

First, use the fact that $H$ is continuous at $a$ and at $b$ and that $H(a) = H(b) =0$ and find $\delta>0$ such that  $\forall z \in [a,a+\delta)\cup(b-\delta, b][H(z)<H(x)]$. 
Then, assume: $z\in(a, b)$. Note: $D^2H(z) >0$. Use Lemma \ref{L:basic} and conclude: $\exists y \in [a,b][H(z)<H(y)]$. 
Observe: $[a,b] = [a,a+\delta)\cup(a, b)\cup(b-\delta, b]$, and  conclude: $\forall z \in [a,b]\exists y \in[a,b][H(z) < H(y)]$.  Contradiction, according to Theorem \ref{T:weierstr}. 

Therefore: $\neg \exists x \in [a,b][G(x) >0]$. 

One may prove in the same way: $\neg \exists x \in [a,b][G(x) <0]$ and conclude: $\forall x \in[a,b][G(x) = 0]$. \end{proof}

   \subsection{The Uniqueness Theorem} \begin{theorem}[Cantor's Uniqueness Theorem, see \cite{cantor70}]\label{T:cu} For every infinite sequence of reals $b_0, a_1, b_1, \ldots$, if, for all $x$ in $[-\pi,\pi]$, $$F(x): = \frac{b_0}{2}+\sum_{n>0} a_n \sin nx + b_n \cos nx =0,$$ then $b_0=0$ and, for all $n>0$, $a_n = b_n =0$. \end{theorem}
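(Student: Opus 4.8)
I would derive the vanishing of all the coefficients from Riemann's function of Section~\ref{S:riemann1} together with Corollary~\ref{cor:cs}. The first point to notice is that the hypothesis ``$F(x)=0$ for all $x$ in $[-\pi,\pi]$'' is considerably stronger than Riemann's standing assumption: for each such $x$ the series $\sum_{n>0}(a_n\sin nx + b_n\cos nx)$ converges, namely to $-\frac{b_0}{2}$, so its terms tend to $0$; and, $F$ being $2\pi$-periodic, $F$ in fact vanishes at every real number. In particular the sequences $a_n,b_n$ are bounded, so Riemann's function $G(x)=\frac{1}{4}b_0x^2+\sum_{n>0}\bigl(\frac{-a_n}{n^2}\sin nx+\frac{-b_n}{n^2}\cos nx\bigr)$ is well-defined and continuous on $[-\pi,\pi]$ and, via the same uniformly convergent series, extends to a continuous function on all of $\mathcal{R}$, still written $G$, of the shape $G(x)=\frac{1}{4}b_0x^2+H(x)$ with $H$ continuous and $2\pi$-periodic.

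Next I would bring in Riemann's first result. Its proof in Section~3.1 uses only the convergence of $\sum_n A_n$ at the point under consideration, so it applies, at every real $x$, to the extended function $G$, and yields $D^2G(x)=F(x)=0$ for all $x\in\mathcal{R}$. By Corollary~\ref{cor:cs}, applied on each interval $[-N,N]$, the function $G$ is affine on every bounded interval, hence (the affine pieces agreeing on overlaps) $G(x)=c+dx$ for suitable real numbers $c,d$ and all $x$. Therefore $H(x)=G(x)-\frac{1}{4}b_0x^2=c+dx-\frac{1}{4}b_0x^2$ for every $x$; but $H$, being the sum of a uniformly convergent trigonometric series, is bounded, and an affine-minus-quadratic function is bounded on $\mathcal{R}$ only if $b_0=0$ and $d=0$. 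This gives $b_0=0$ and $H\equiv c$.

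It then remains to recover the other coefficients. We have reached $\sum_{n>0}\bigl(\frac{-a_n}{n^2}\sin nx+\frac{-b_n}{n^2}\cos nx\bigr)=c$ for all $x$, with uniform convergence, so the series may be integrated term by term over $[-\pi,\pi]$. Multiplying first by $\cos mx$ and then by $\sin mx$, for $m\ge 1$, and using the orthogonality relations for the trigonometric system, one finds $-\frac{b_m}{m^2}\pi=\int_{-\pi}^{\pi}c\cos mx\,dx=0$ and $-\frac{a_m}{m^2}\pi=\int_{-\pi}^{\pi}c\sin mx\,dx=0$, whence $a_m=b_m=0$. Together with $b_0=0$ this is the assertion of the theorem.

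The step I expect to be the real obstacle is the passage to $b_0=0$: this is precisely where the $2\pi$-periodicity of the trigonometric data is indispensable, and the route above extracts it by running Riemann's first result on intervals other than $(-\pi,\pi)$ (legitimate, since the relevant part of that proof is local). An alternative uses Riemann's second result: Corollary~\ref{cor:cs} already gives $G$ affine on $[-\pi,\pi]$, hence, since $H(-\pi)=H(\pi)$, constant there, so that $H(x)=c-\frac{1}{4}b_0x^2$ on $[-\pi,\pi]$; by $2\pi$-periodicity the extended $G$ then has slope $0$ immediately to the left of $\pi$ and slope $b_0\pi$ immediately to the right, and since a symmetric first derivative can exist at such a corner only when the one-sided slopes coincide, the relation $D^1G(\pi)=0$ forces $b_0=0$. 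Finally, a point that is classically automatic but deserves attention in a constructive reading is the boundedness of $a_n,b_n$, equivalently the continuity of $G$, which must be read off from the convergence of the series $F$ by a Cantor--Lebesgue type argument, and which is used both for Corollary~\ref{cor:cs} and for the term-by-term integration.
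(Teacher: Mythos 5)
Your proposal runs along the same lines as the paper up to the linearity of $G$: you form Riemann's function $G$, apply Riemann's first result to get $D^2G=0$, and invoke Corollary~\ref{cor:cs} to conclude $G$ is affine. Your route from there to $b_0=0$ is genuinely different from the paper's: you extend $G$ to all of $\mathcal{R}$, apply Corollary~\ref{cor:cs} on each $[-N,N]$, and then argue that the periodic part $H=G-\tfrac14 b_0x^2$ cannot be bounded unless $b_0=0$, whereas the paper stays on $[-\pi,\pi]$, reads off $b_n=(-1)^n b_0$ from term-by-term integration, and uses $\lim_n b_n=0$ to kill $b_0$. Both variants are correct, and your alternative via $D^1G(\pi)=0$ at the ``corner'' of the periodic extension is a neat observation.

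The genuine gap is in the sentence ``In particular the sequences $a_n,b_n$ are bounded,'' which you promote to a fact needed both for the uniform convergence of the series defining $G$ and for the term-by-term integration. Pointwise vanishing of $a_n\sin nx+b_n\cos nx$ for every $x$ does \emph{not} yield boundedness of $(a_n),(b_n)$ without a further argument; you acknowledge at the end that ``a Cantor--Lebesgue type argument'' is required, but that argument is precisely what the paper refuses to invoke here: intuitionistically the Cantor--Lebesgue theorem (Lemma~\ref{L:cl}(i)) uses Brouwer's Continuity Principle, and the whole point of the proof of Theorem~\ref{T:cu} is to avoid that principle (it is brought in only in Section~10, where the paper notes it would indeed trivialize the second half of the proof). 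The paper circumvents the difficulty with the Riemann--Kronecker reduction: one first proves the theorem under the \emph{provisional assumption} that $b_0,a_1,b_1,\dots$ is bounded, and then, for each fixed $x\in[-\pi,\pi]$, applies the provisional result to $K(t):=F(x+t)+F(x-t)=b_0+2\sum_{n>0}(a_n\sin nx+b_n\cos nx)\cos nt$, whose coefficient sequence $n\mapsto a_n\sin nx+b_n\cos nx$ \emph{is} bounded because it tends to $0$ by the hypothesis at $x$ itself. This yields $b_0=0$ and $a_n\sin nx+b_n\cos nx=0$ for all $x$, from which $a_n=b_n=0$ follows. Without this reduction (or an explicit appeal to the Continuity Principle, which would need to be flagged as such), your proof is incomplete from the constructive standpoint the paper adopts.
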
 
   
   \begin{proof} Assume: the infinite sequence of reals $b_0, a_1, b_1, \ldots$ satisfies the conditions of the theorem. Then also, for each $x$ in $[-\pi,\pi]$, $\lim_{n\rightarrow \infty} a_n\sin nx + b_n \cos nx = 0$, that is: Riemann's condition is satisfied. Taking $x=0$, we also see: $\lim_{n\rightarrow \infty} b_n =0$.
   
   \smallskip
   
   \textit{We make a provisional assumption}: the infinite sequence  $b_0, a_1, b_1, \ldots$ is bounded.
   
   Consider $$G(x) := \frac{1}{4}b_0x^2+\sum_{n>0} \frac{-a_n}{n^2} \sin nx + \frac{-b_n}{n^2} \cos nx. $$ According to Riemann's first result, for all $x$ in $(-\pi,\pi)$, $D^2G(x) =0$. Use Corollary \ref{cor:cs} and find $a, b$ such that, for all $x$ in $[-\pi,\pi]$, $G(x) = ax + b$. 
   
   Note: $G(\pi)=G(-\pi)$ and conclude: $a=0$ and, for all $x$ in $[-\pi,\pi]$, $G(x) =b$.
   
   Note: for each $n>0$, $\frac{1}{\pi}\int_{-\pi}^\pi x^2 \cos nx dx = \frac{-1}{n\pi}\int_{-\pi}^\pi  2x \sin nx dx=\frac{2x}{n^2\pi}\cos nx \mid^\pi_{-\pi}= (-1)^{n}\frac{4}{n^2}$, and, for each $n>0$, $\frac{1}{\pi}\int_{-\pi}^\pi x^2 \sin nx dx =0$.
   
   Define, for each $N>0$ in $\mathbb{N}$, $G_N(x) := \frac{1}{4}b_0x^2+\sum_{n=1}^N \frac{-a_n}{n^2} \sin nx + \frac{-b_n}{n^2} \cos nx$. 
   
   As the infinite sequence $b_0, a_1, b_1, \dots$ is bounded, the sequence of functions $G_1, G_2, \ldots$ converges to the function $G$ \textit{uniformly on $[-\pi,\pi]$}.
   
    Conclude: for each $n>0$, $0=\frac{1}{\pi}\int_{-\pi}^\pi b \sin nxdx=\frac{1}{\pi}\int_{-\pi}^\pi G(x) \sin nxdx= \\\lim_{N\rightarrow \infty}\frac{1}{\pi}\int_{-\pi}^\pi G_N(x) \sin nxdx=\frac{-a_n}{n^2}$, so $a_n=0 $. 
   
   Also conclude: for each $n>0$, $0=\frac{1}{\pi}\int_{-\pi}^\pi b \cos nxdx=\frac{1}{\pi}\int_{-\pi}^\pi G(x) \cos nxdx= \\\lim_{N\rightarrow \infty}\frac{1}{\pi}\int_{-\pi}^\pi G_N(x) \cos nxdx=\frac{(-1)^nb_0-b_n}{n^2}$, so $b_n=(-1)^nb_0 $. As $\lim_{n \rightarrow \infty}b_n =0$, we conclude: for all $n$, $b_n =0$. 
   
   \smallskip \textit{One may do without the provisional assumption.} We no longer assume that  the infinite sequence $b_0, a_1, b_1, \ldots$ is bounded.
   
   Using a suggestion made by Riemann himself in par. 12 of \cite{riemann67} and repeated by L.~Kronecker in a letter to Cantor, see the footnote in \cite{cantor71}, we continue as follows.
   
    Assume: $x \in [-\pi,\pi]$. Define for each $t$ in $[-\pi, \pi]$, $$ K(t) = F(x+t) + F(x-t)$$ and note: $$K(t) = b_0 + 2\sum_{n>0}(a_n\sin nx + b_n \cos nx)\cos nt,$$
    
    as, for each $n$, for each $t$, $\sin(nx + nt) + \sin(nx - nt) = 2\sin nx \cos nt$ and $\cos(nx + nt) + \cos(nx - nt) = 2\cos nx \cos nt$.
    Note: for each $t$ in $[-\pi,\pi]$, $K(t)=0$, and: the sequence $n \mapsto a_n\sin nx + b_n \cos nx$ converges and is bounded. Using the first part of the proof,  conclude: $b_0 = 0$ and, for each $n>0$,  $ a_n\sin nx + b_n \cos nx=0$. This conclusion holds for all $x$ in $[-\pi,\pi]$.  Conclude: $b_0 = 0$ and, for each $n>0$, $a_n = b_n = 0$.  \end{proof}

   \section{Every co-finite subset of $[-\pi,\pi]$ guarantees uniqueness}  Let $\mathcal{X}$ be a subset of $[-\pi,\pi]$. We define: \textit{$\mathcal{X}$ guarantees uniqueness}\footnote{  In classical mathematics, one uses the expression: \textit{$[-\pi,\pi]\setminus\mathcal{X}$ is a set of uniqueness} where we say: \textit{$\mathcal{X}$ guarantees uniqueness}.} if and only if, for every infinite sequence of reals $b_0, a_1, b_1, \ldots$, if for all $x$ in $\mathcal{X
   }$, $F(x) := \frac{b_0}{2}+\sum_{n>0} a_n \sin nx + b_n \cos nx =0$, then $b_0=0$ and, for all $n>0$, $a_n = b_n =0$.

   \smallskip
   We define: a subset $\mathcal{X}$ of $\mathcal{R}$ is \textit{co-finite} if and only if there exist $n$ in $\mathbb{N}$, $x_0, x_1, \ldots, x_{n-1}$ in $\mathcal{R}$ such that, for all $x$ in $\mathcal{R}$, if $\forall i<n[x\;\#_\mathcal{R}\; x_i]$, then $x \in \mathcal{X}$. 
   
   A set of the form $\{x_0, x_1, \ldots, x_{n-1}\}$ is called a \textit{finitely enumerable} set of real numbers. As the relation $=_\mathcal{R}$ of real coincidence is not a decidable relation, we often are unable to determine the number of elements of a finitely enumerable set of reals; that is, constructively, a finitely enumerable set of reals is not necessarily a \textit{finite} set of reals. 
   
   \medskip
   Cantor saw, how to prove, with the help of Riemann's second result, the following extension of his Uniqueness Theorem, see \cite{cantor71}.
   
   \begin{theorem}\label{T:cucofinite} Every co-finite subset of $[-\pi,\pi]$ guarantees uniqueness.\end{theorem}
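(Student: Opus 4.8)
The plan is to imitate the proof of Theorem~\ref{T:cu}: first settle the case in which the coefficient sequence is bounded, and then lift that restriction by Riemann's substitution $t\mapsto F(x+t)+F(x-t)$. Fix reals $x_0,\ldots,x_{n-1}$ witnessing the co-finiteness, so that $F(x)=0$ for every $x$ in $[-\pi,\pi]$ with $\forall i<n[x\;\#_\mathcal{R}\;x_i]$; at every such $x$ Riemann's condition $\lim_{m\to\infty}(a_m\sin mx+b_m\cos mx)=0$ holds.

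\emph{The bounded case.} Assume also that $b_0,a_1,b_1,\ldots$ is bounded, and form, as in Theorem~\ref{T:cu},
$$G(x):=\frac{1}{4}b_0x^2+\sum_{m>0}\frac{-a_m}{m^2}\sin mx+\frac{-b_m}{m^2}\cos mx,$$
a continuous function on $[-\pi,\pi]$ (the series converging uniformly). By Riemann's first result $D^2G(x)=F(x)=0$, and by Riemann's second result $D^1G(x)=0$, for every $x$ in $(-\pi,\pi)$ with $\forall i<n[x\;\#_\mathcal{R}\;x_i]$. The heart of the proof is the claim that $G$ is linear on $[-\pi,\pi]$; granting it, the rest of the proof of Theorem~\ref{T:cu} goes through unchanged, so that $G(\pi)=G(-\pi)$ forces $G$ to be constant, the uniformly convergent partial sums of $G$ yield $a_m=0$ for $m>0$ and $b_m=(-1)^mb_0$, and Riemann's condition at a point $x$ of the co-finite set at which $\cos mx$ does not tend to $0$ gives $b_0=0$, whence $b_m=0$ for all $m$.

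\emph{That $G$ is linear.} By Corollary~\ref{cor:cs}, $G$ is linear on every closed subinterval $[c,d]$ of $[-\pi,\pi]$ with $\forall i<n\,\forall x\in[c,d][x\;\#_\mathcal{R}\;x_i]$, so linearity can fail only at the finitely enumerably many points $x_i$. To glue the pieces across an $x_i$ in $(-\pi,\pi)$ I would first show that Riemann's condition holds at $x_i$ too: from the identity $2(a_m\sin mx_i+b_m\cos mx_i)\cos mt=\bigl(a_m\sin m(x_i+t)+b_m\cos m(x_i+t)\bigr)+\bigl(a_m\sin m(x_i-t)+b_m\cos m(x_i-t)\bigr)$, valid for all $t$ and having right-hand side tending to $0$ for every $t$ in the co-finite set $\{t:\forall i<n[(x_i+t)\;\#_\mathcal{R}\;x_i\;\wedge\;(x_i-t)\;\#_\mathcal{R}\;x_i]\}$, one squares, integrates over $t\in[0,\pi]$, and uses $\int_0^\pi\cos^2 mt\,dt=\frac{\pi}{2}$ together with the boundedness of the integrand and a constructive bounded-convergence argument to conclude $(a_m\sin mx_i+b_m\cos mx_i)^2\to 0$; Riemann's second result then gives $D^1G(x_i)=0$. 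Applying now to $G$ minus its chord on $[-\pi,\pi]$ an \emph{extended Cantor--Schwarz Lemma} --- if $G$ is continuous on $[a,b]$ with $G(a)=G(b)=0$, and $D^2G(x)=0$ for $x$ in $(a,b)$ outside a finitely enumerable set $S$ while $D^1G(x)=0$ for $x$ in $S\cap(a,b)$, then $G\equiv 0$ --- yields that $G$ is linear. This extended Lemma is proved by adapting the argument of Lemma~\ref{L:cs}: the perturbed function $H_\rho$ again attains its greatest value at some $z$ in $(a,b)$; if $z$ is apart from every element of $S$ one contradicts $D^2H_\rho(z)=2\varepsilon>0$ via Lemma~\ref{L:basic}, and if $z$ coincides with some $x_i$ in $S$ then the vanishing of $D^1H_\rho$ at $z$, together with the strict convexity supplied on either side of $z$ by $D^2H_\rho=2\varepsilon>0$, forces $H_\rho$ to be constant near $z$, again a contradiction.

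\emph{Lifting the boundedness, and the main obstacle.} Now drop the extra hypothesis. Given $x$ in $[-\pi,\pi]$ with $\forall i<n[x\;\#_\mathcal{R}\;x_i]$, we have $(a_m\sin mx+b_m\cos mx)_m\to 0$, so $K(t):=F(x+t)+F(x-t)=b_0+2\sum_{m>0}(a_m\sin mx+b_m\cos mx)\cos mt$ is a cosine series with bounded coefficients that vanishes for every $t$ in $[-\pi,\pi]$ outside a finitely enumerable set; by the bounded case, $b_0=0$ and $a_m\sin mx+b_m\cos mx=0$ for all $m>0$. This holds throughout the dense co-finite set, and each $x\mapsto a_m\sin mx+b_m\cos mx$ is continuous, so it vanishes identically: $a_m=b_m=0$. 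The main obstacle is the proof that $G$ is linear: both the constructive bounded-convergence step transporting Riemann's condition to the exceptional points, and the adaptation of the Cantor--Schwarz argument in the presence of finitely enumerably many points that one cannot separate or linearly order, demand care --- the latter being a simple instance of the machinery developed later in the paper for co-enumerable open sets.
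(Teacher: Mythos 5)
Your overall plan (first settle the bounded case, then lift the restriction by the Riemann--Kronecker substitution $K(t)=F(x+t)+F(x-t)$) is the same as the paper's, and your second half, applying the bounded case to $K$, is right. But your treatment of the bounded case contains two steps that do not survive constructive scrutiny. First, the bounded-convergence argument by which you propose to ``transport'' Riemann's condition to the exceptional points $x_i$: constructively, pointwise convergence at a co-finite set of $t$'s plus uniform boundedness of the integrand does \emph{not} license the conclusion $\int_0^\pi f_m\,dt\to 0$ (there is no constructive bounded-convergence theorem of that strength, as it would require an Egorov-type uniformity one simply does not have). Indeed what you are attempting here is precisely a direct constructive Cantor--Lebesgue-type conclusion, and the paper explicitly remarks, after Lemma~\ref{L:cl}, that the author suspects such a direct proof, avoiding Brouwer's Continuity Principle, to be impossible. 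Fortunately this step is also \emph{unnecessary}: once you pass to $K(t)=b_0+2\sum_{m>0}(a_m\sin mx+b_m\cos mx)\cos mt$ with $x$ in the co-finite set, the coefficient sequence of $K$ tends to $0$ (because $F(x)$ converges), so Riemann's condition for $K$ holds at \emph{every} $t\in[-\pi,\pi]$, exceptional points included; the bounded case is only ever invoked in this favourable situation, and no transporting is needed.

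Second, in your sketch of the ``extended Cantor--Schwarz Lemma'' the case distinction --- ``if $z$ is apart from every element of $S$ \dots, and if $z$ coincides with some $x_i$ in $S$ \dots'' --- is not constructively exhaustive: $\neg\forall i[z\;\#_\mathcal{R}\;x_i]$ does not entail $\exists i[z=_\mathcal{R}x_i]$. The correct way around this, used in Lemma~\ref{L:csby}, is to \emph{construct} the maximizing point $z$ so as to be positively apart from every point of the enumerated exceptional set, by a diagonal argument over Cantor space; this is considerably more work than a case split. The paper's own route for the finite case is more elementary than either of your two devices: it makes a first provisional assumption that the exceptional points are pairwise equal-or-apart (so that they can be listed in increasing order as $-\pi<c_0<\dots<c_{m-1}<\pi$), applies the ordinary Corollary~\ref{cor:cs} on each $(c_i,c_{i+1})$, glues the linear pieces with $D^1G(c_i)=0$, and at the end removes the provisional assumption by a double-negation argument appealing to the stability of equality of reals. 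Your extended lemma is essentially a special case of Lemma~\ref{L:csby}, so the idea is recoverable, but as sketched both the convergence step and the case split are genuine gaps.
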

   
   \begin{proof} Find $n$ in $\mathbb{N}$ and  $x_0, x_1, \ldots,x_{n-1}$ in $\mathcal{R}$ such that, for all $x$ in $[-\pi,\pi]$, if $\forall i<n[x\;\#_\mathcal{R}\; x_i]$, then $x \in \mathcal{X}$.
   
   The numbers $x_0, x_1, \ldots,x_{n-1}$ may not belong to $[-\pi,\pi]$. We solve this little problem as follows.  Define, for each $i<n$, $y_i :=\inf (\pi, \sup (x_i, -\pi)\bigr)$. Note: for each $i<n$, $y_i \in [-\pi,\pi]$ and, for all $x$ in $[-\pi,\pi]$, $\forall i<n[ x \;\#_\mathcal{R}\; x_i]$ if and only if   $\forall i<n[ x \;\#_\mathcal{R}\; y_i]$. Define $\mathcal{Y}:=\{\pi,-\pi,y_0,y_1,\ldots,y_{n-1}\}$.

  We make a  \textit{provisional assumption: $\forall x \in \mathcal{Y}\forall y \in \mathcal{Y}[x=_\mathcal{R} y \;\vee\; x\;\#_\mathcal{R}\;y]$.}
   
   We then may determine $m\le n$ and $c_0,c_1,\ldots,c_{m-1}$ in $\mathcal{X}$ such that $-\pi<c_0<\dots<c_{m-1}<\pi$ and $\mathcal{Y} =\{-\pi,c_0,\dots,c_{m-1},\pi\}$. 
   
   Now let $b_0,a_1,b_1, \ldots$ be given such that, for all $x$ in $\mathcal{X}$, $$F(x):=\frac{b_0}{2}+\sum_{n>0}a_n\sin nx + b_n\cos nx=0.$$

   We make a \textit{second provisional assumption:} the infinite sequence $b_0, a_1, b_1, \ldots$ is \textit{bounded}.
   
   The function $$G(x) := \frac{1}{4}b_0x^2 +\sum_{n>0}\frac{- a_n}{n^2} \sin nx + \frac{-b_n}{n^2} \cos nx$$ then is defined everywhere on $[-\pi,\pi]$ and everywhere continuous, 
    and, according to Riemann's first result, for all $x$ in $\mathcal{X}$, $D^2G(x) =0$. Find $n_0$ such that $-\pi +\frac{1}{2^{n_0-1}} < c_0 $. According to Corollary \ref{cor:cs}, for each $n\ge n_0$, $G$ is linear on $[-\pi+\frac{1}{2^n}, c_0-\frac{1}{2^n}]$. Conclude: $G$ is linear on $(-\pi, c_0)$. For similar reasons, $G$ is linear on each of the intervals $(c_0, c_1)$, $(c_1, c_2), \ldots , (c_{m-1}, \pi)$. 
    
    Find $d_0, e_0, d_1, e_1$ such that, for every $x$ in $(-\pi, c_0)$, $G(x) = d_0x +e_0$ and, for every $x$ in $(c_0,c_1)$, $G(x) = d_1x + e_1$.  According to Riemann's second result,  $D^1G(c_0)=0$, and, therefore: $d_0 = d_1$ and also: $e_0=e_1$. We thus see: $G$ is linear on $(-\pi, c_1)$ and, continuing the argument, we find: $G$ is linear on $(-\pi, \pi)$ and thus also on $[-\pi,\pi]$. Now conclude, as in the proof of Theorem \ref{T:cu}: $b_0=0$ and, for all $n>0$, $a_n = b_n =0$.
    
    \smallskip \textit{ One may do without he first provisional assumption}. As the set $\mathcal{Y}$ is finitely enumerable, one may prove intuitionistically\footnote{One may prove this by induction on the length of the enumeration, using: $\neg \neg(x \;\#_\mathcal{R}\; y \;\vee\; x =_\mathcal{R} y)$ and $(\neg\neg A \;\wedge \;\neg\neg B) \rightarrow \neg\neg(A\;\wedge\;B)$.}: $\neg\neg\bigl(\forall x \in \mathcal{Y}\forall y \in \mathcal{Y}[x=_\mathcal{R} y \;\vee\; x\;\#_\mathcal{R}\;y]\bigr)$. The proof thus far established: $$\forall x \in \mathcal{Y}\forall y \in \mathcal{Y}[x=_\mathcal{R} y \;\vee\; x\;\#_\mathcal{R}\;y]\rightarrow b_0 =0.$$
    Taking two times the contraposition, we obtain:  $$\neg\neg\forall x \in \mathcal{Y}\forall y \in \mathcal{Y}[x=_\mathcal{R} y \;\vee\; x\;\#_\mathcal{R}\;y]\rightarrow \neg\neg(b_0 =0).$$ Therefore: $\neg\neg(b_0=0)$. As is well-known, equality on the reals is a  \textit{stable} relation, that is, one may conclude:  $b_0 =0$.
    
     For similar reasons: for all $n>0$, $a_n = b_n = 0$.
    
     \smallskip \textit{One may do without the second provisional assumption}. We no longer assume that  the infinite sequence $b_0, a_1, b_1, \ldots$ is bounded.
   
   Using the suggestion made by Riemann and  Kronecker, we reason as follows.
   
    Assume: $x \in [-\pi,\pi]$ and $\forall i<n[x\;\#_\mathcal{R}\; x_i]$. Define for each $t$ in $[-\pi, \pi]$, $$ K(t) := F(x+t) + F(x-t)$$ and note: $$K(t) = b_0 + 2\sum_{n>0}(a_n\sin nx + b_n \cos nx)\cos nt,$$
    
    Let $\mathcal{Y}$ be the set of all numbers of one of the forms  $x_i-x$, $x_i+x$, where $i<n$.
   Note: for each $t$ in $[-\pi,\pi]$, if $\forall y \in \mathcal{Y}[ t\;\#_\mathcal{R}\;y]$ then $\forall i<n[x+t\;\#_\mathcal{R}\; x_i \;\wedge\; x-t\;\#_\mathcal{R}\; x_i]$ and $K(t)=0$. Also, as $\forall i<n[x\;\#_\mathcal{R}\; x_i]$,  the sequence $n \mapsto a_n\sin nx + b_n \cos nx$ converges and is bounded. Using the first part of the proof, we conclude: $b_0 = 0$ and, for each $n>0$,  $ a_n\sin nx + b_n \cos nx=0$. This conclusion holds for all $x$  in $[-\pi,\pi]$ such that $\forall i<n[x\;\#_\mathcal{R}\; x_i]$.  We conclude: $b_0 = 0$ and, for each $n>0$, $a_n = b_n = 0$. \end{proof}

   \section{Every  open subset of $[-\pi,\pi]$ that is eventually full guarantees uniqueness}
   
  \subsection{An important Lemma} 
   
   Let $\mathcal{G}$ be a subset of $\mathcal{R}$. $\mathcal{G}$ is \textit{open} if and only if, for each $x$ in $\mathcal{G}$, there exists $n$ such that $(x-\frac{1}{2^n}, x+\frac{1}{2^n})\subseteq \mathcal{G}$.  
   
   Let $a,b$ in $\mathcal{R}$ be given such that $a<b$. Let $\mathcal{G}$ be an open subset of $(a,b)$ and let $H$ be a function from $(a,b)$ to $\mathcal{R}$. We define: $H$ \textit{is locally linear on} $\mathcal{G}$ if and only if, for each $x $ in $\mathcal{G}$ there exists $n$ such that $(x-\frac{1}{2^n}, x+\frac{1}{2^n})\subseteq \mathcal{G}$ and $H$ is linear on $(x-\frac{1}{2^n}, x+\frac{1}{2^n})$.
  \begin{lemma}\label{L:lll} For all $a,b$ in $\mathcal{R}$ such that $ a<b$, for every function $H$ from $(a,b)$ to $\mathcal{R}$, if $H$ is  locally linear on $(a,b)$, then $H$ is  linear on $(a,b)$.
    \end{lemma}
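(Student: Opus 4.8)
The plan is to reduce the lemma to Corollary \ref{cor:cs}, the consequence of the Cantor--Schwarz Lemma stating that a function with vanishing symmetric second derivative on an open interval is linear on the corresponding closed interval.

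First I would establish the following claim: for all $c,d$ in $\mathcal{R}$ with $a<c<d<b$, $H$ is linear on $[c,d]$. Indeed, $[c,d]\subseteq(a,b)$, so by local linearity every point of $[c,d]$ has a neighbourhood on which $H$ agrees with an affine function; in particular $H$ is continuous at every point of $[c,d]$, hence continuous on $[c,d]$. Moreover, for each $x$ in $(c,d)$ we may find $n$ with $(x-\frac{1}{2^n},x+\frac{1}{2^n})\subseteq(a,b)$ and real numbers $p,q$ with $H(y)=py+q$ for all $y$ in $(x-\frac{1}{2^n},x+\frac{1}{2^n})$; then $H(x+h)+H(x-h)-2H(x)=0$ for all $h$ with $|h|<\frac{1}{2^n}$, so $D^2H(x)$ exists and equals $0$. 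Corollary \ref{cor:cs} now applies and gives that $H$ is linear on $[c,d]$.

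Next I would patch these pieces together. Fix $c_0,d_0$ with $a<c_0<d_0<b$ and, using the claim, let $p_0,q_0$ be such that $H(y)=p_0y+q_0$ for all $y$ in $[c_0,d_0]$. Let $x$ in $(a,b)$ be given. Since $a<\inf(c_0,x)$ and $\sup(d_0,x)<b$, one may find $c,d$ with $a<c<\inf(c_0,x)$ and $\sup(d_0,x)<d<b$; then $[c_0,d_0]\subseteq[c,d]$ and $x\in[c,d]$, and by the claim there are $p,q$ with $H(y)=py+q$ for all $y$ in $[c,d]$. As $py+q=H(y)=p_0y+q_0$ holds for the two distinct points $y=c_0$ and $y=d_0$, we get $p=p_0$ and $q=q_0$, so $H(x)=p_0x+q_0$. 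Since $x$ in $(a,b)$ was arbitrary, $H$ is linear on $(a,b)$.

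Since the substantial work is contained in Corollary \ref{cor:cs} --- and hence in the Cantor--Schwarz Lemma and the Fan Theorem --- I do not expect a real obstacle here; the only point requiring a little care is to notice that $H$ is continuous on the whole closed interval $[c,d]$ while $D^2H$ vanishes only on the open interval $(c,d)$, which is precisely the hypothesis pattern of Corollary \ref{cor:cs}. (A more self-contained alternative would cover a closed subinterval $[c,d]\subseteq(a,b)$ by finitely many intervals on which $H$ is affine, via the Heine--Borel Theorem of \ref{SSS:hb}; arranging these intervals to have rational endpoints makes their order relations decidable, and one can then chain the affine pieces, consecutive ones overlapping in a nondegenerate interval and therefore coinciding. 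This works but is longer.)
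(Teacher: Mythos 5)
Your proof is correct, but it takes a genuinely different route from the paper's, and the difference matters from the paper's own point of view. The paper's proof is an elementary successive-bisection argument: assuming two overlapping affine pieces had apart slopes $d_0\;\#_\mathcal{R}\;d_1$, one repeatedly bisects and at each stage retains the apartness, converging to a point where the local linearity of $H$ yields a contradiction. The author explicitly flags this as a feature: \emph{``The proof we give is elementary in the sense that it avoids the use of the Heine-Borel Theorem, or, equivalently, the Fan Theorem.''} You instead route through Corollary \ref{cor:cs}, whose proof rests on Lemma \ref{L:cs}, which uses the existence of $\sup_{[a,b]}(f)$ and hence the Fan Theorem. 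Your steps are sound: on any $[c,d]\subseteq(a,b)$, local linearity gives continuity on $[c,d]$ and $D^2H=0$ on $(c,d)$ (you are right that Corollary \ref{cor:cs} tacitly needs continuity, even though its statement omits it, and you verify it); Corollary \ref{cor:cs} gives linearity on $[c,d]$; and your patching --- picking a fixed nondegenerate $[c_0,d_0]$, enclosing any given $x$ together with $[c_0,d_0]$ in a larger $[c,d]$, and comparing affine coefficients at the two points $c_0,d_0$ --- is constructively unobjectionable ($a<\inf(c_0,x)$ and $\sup(d_0,x)<b$ hold effectively). What you buy is brevity and reuse of machinery already established; what you lose is the paper's deliberate economy of hypotheses for this particular lemma. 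Since Corollary \ref{cor:cs} is in any case invoked in the later theorems that apply Lemma \ref{L:lll} (e.g.\ Theorem \ref{T:evfullunique}), the global dependence of the development is unchanged, so your proof is a legitimate alternative --- just not the Fan-Theorem-free one the paper prefers.
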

    
    \begin{proof}The proof we give is elementary in the sense that it avoids the use of the Heine-Borel Theorem, or, equivalently, the Fan Theorem.
    
     Assume: $a<x_0<x_1<b$.  Find $m$ in $\mathbb{N}$, $d_0, d_1, e_0, e_1$ in $\mathcal{R}$, such that: $(x_0 -\frac{1}{2^{m}}, x_0 + \frac{1}{2^{m}})\subseteq \mathcal{G}$ and, for all $x$ in  $( x_0 - \frac{1}{2^{m}}, x_0 + \frac{1}{2^{m}})$, $H(x) = d_0x+e_0$, and: $(x_1 -\frac{1}{2^{m}}, x_1 + \frac{1}{2^{m}})\subseteq \mathcal{G}$, and, for all $x$ in  $( x_1 - \frac{1}{2^{m}}, x_1 + \frac{1}{2^{m}})$, $H(x) = d_1x+e_1$. We want to prove: $d_0 = d_1$ and $e_0=e_1$. Assume: $d_0 \;\#_\mathcal{R}\;d_1$. We will obtain a contradiction by the method of \textit{successive bisection}. 
    
    We define an infinite sequence $(a_n, d_{n,0}, e_{n,0},  b_n,  d_{n,1}, e_{n,1})_{n\in \mathbb{N}}$ 
    of sextuples of reals  such that $(a_0, d_{0,0}, e_{0,0},  b_0,  d_{0,1}, e_{0,1})=(x_0, d_0, e_0, x_1, d_1, e_1)$, and, for each $n$, \begin{enumerate}[\upshape 1.]  \item 
     either: $(a_{n+1}, b_{n+1}) = (a_n,\frac{1}{2}(a_n + b_n))$, or: $(a_{n+1}, b_{n+1}) = (\frac{1}{2}(a_n + b_n), b_n)$, and \item there exists $m$ such that: $( a_n - \frac{1}{2^{m}}, a_n + \frac{1}{2^{m}}) \subseteq \mathcal{G}$ and,   for all $x$ in  $( a_n - \frac{1}{2^{m}}, a_n + \frac{1}{2^{m}}) $, $H(x) = d_{n,0}x+e_{n,0}$,  and: $( b_n - \frac{1}{2^{m}}, b_n + \frac{1}{2^{m}})\subseteq \mathcal{G}$ and, for all $x$ in  $( b_n - \frac{1}{2^{m}}, b_n + \frac{1}{2^{m}})$,  $H(x) = d_{n,1}x+e_{n,1}$, and \item  $d_{n,0} \;\#_\mathcal{R}\; d_{n,1}$.\end{enumerate} We do so as follows.  Suppose $n$ is given and $(a_n, d_{n,0}, e_{n,0},  b_n,  d_{n,1},e_{n,1})$  has been defined. Define $c_n := \frac{a_n+b_n}{2}$ and find $d,e,n$ such that  for all $x$ in  $( c - \frac{1}{2^{n}}, c + \frac{1}{2^{n}})$, $H(x) = dx+e$. Note: either $d \;\#_\mathcal{R} \; d_{n,0}$ or $d \;\#_\mathcal{R} \; d_{n,1}$. If you first discover  $d \;\#_\mathcal{R} \; d_{n,0}$, define: $a_{n+1} := a_n$, $d_{n+1,0} :=d_{n,0}$ and $e_{n=1,0} := e_{n,0}$ and $b_{n+1} :=c$, $d_{n+1,1} := d$ and $e_{n+1,1} := e$, and,  if you first discover  $d \;\#_\mathcal{R} \; d_{n,1}$, define: $a_{n+1} := c$, $d_{n+1,0} :=d$ and $e_{n=1,0} := e$ and $b_{n+1} :=b_n$, $d_{n+1,1} := d_{n,1}$ and $e_{n+1,1} := e_{n,1}$. We use the expression \textit{`first discover'} in order to indicate that one may formulate a rule as to which of the two alternatives we should choose in terms of the way the numbers $d, d_{n,0}$ and $d_{n,1}$ are given to us as infinite sequences of rational approximations. One may avoid the use of an Axiom of Countable Choice.
    
    Now find $z$ such that, for each $n$, $a_n \le z \le b_n$. Find $d,e,m$ such that  $( z - \frac{1}{2^{m}}, z + \frac{1}{2^{m}})\subseteq \mathcal{G}$ and, for all $x$ in  $( z - \frac{1}{2^{m}}, z + \frac{1}{2^{m}})$, $H(x) = dx+e$. Find $n_0$ such that $b_{n_0} - a_{n_0} < \frac{1}{2^m}$ and conclude: $d=d_{n_0, 0}=d_{n_0,1}$. Contradiction.
    
    We have to admit: $d_0 = d_1$, and: $e_0 = e_1$, and: $G$ is linear on $(a,b)$.\end{proof}
    
 \subsection{The co-derivative extension of an open set}  Let $\mathcal{G}$ be an open subset of $\mathcal{R}$. We define: $$\mathcal{G}^+:=\{x\in\mathcal{R}|\exists n\in\mathbb{N}\exists y\in\mathcal{R}[|x-y|<\frac{1}{2^n}\;\wedge\;\forall z[(|x-z|< \frac{1}{2^n}\;\wedge\; z\;\#_\mathcal{R}\; y) \rightarrow z \in \mathcal{G}]]\}.$$  $\mathcal{G}^+$ is called: the \textit{(first) co-derivative extension of $\mathcal{G}$}. 
 
 Note: $x\in\mathcal{G}^+$ if and only if all members of  some neighbourhood of $x$ are in  $\mathcal{G}$ \textit{with one possible and well-known exception}. 
   
   Note: $\mathcal{G} \subseteq \mathcal{G}^+$ and: $\mathcal{G}^+$ is an open subset of $\mathcal{R}$.

  \smallskip Note: if $\mathcal{G}^+$ is \textit{inhabited}, that is, one may effectively find an element of $\mathcal{G}^+$, then so is $\mathcal{G}$.
  
  \smallskip
   The next Lemma explains the use Cantor is making of Riemann's second result. 
   \begin{lemma}\label{L:cv} For all $a,b$ in $\mathcal{R}$ such that $a<b$, for  every open subset $\mathcal{G}$ of $(a,b)$,  for every function $H$ from $(a,b)$ to $\mathcal{R}$, if, for all $x$ in $(a,b)$, $D^1H(x) =0$, and $H$ is locally linear on $\mathcal{G}$, then $H$ is locally linear on the co-derivative extension $\mathcal{G}^+$ of $\mathcal{G}$.\end{lemma}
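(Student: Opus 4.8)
The plan is to unfold the definition of $\mathcal{G}^+$ at a given point, split the resulting neighbourhood at its one admissible exceptional point into a left and a right half that both lie inside $\mathcal{G}$, apply Lemma~\ref{L:lll} on each half, and then glue the two linear pieces together across the exceptional point using the hypothesis $D^1H=0$.

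In detail: let $x\in\mathcal{G}^+$ be given and find $n$ in $\mathbb{N}$ and $y$ in $\mathcal{R}$ with $|x-y|<\frac{1}{2^n}$ and $\forall z[(|x-z|<\frac{1}{2^n}\;\wedge\;z\;\#_\mathcal{R}\;y)\rightarrow z\in\mathcal{G}]$. From $|x-y|<\frac{1}{2^n}$ we get $x-\frac{1}{2^n}<y<x+\frac{1}{2^n}$, so the open intervals $I^-:=(x-\frac{1}{2^n},y)$ and $I^+:=(y,x+\frac{1}{2^n})$ are inhabited; and every $z$ in $I^-$ has $|x-z|<\frac{1}{2^n}$ and $z<_\mathcal{R}y$, hence $z\;\#_\mathcal{R}\;y$, so $z\in\mathcal{G}$, and likewise $I^+\subseteq\mathcal{G}$. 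Taking a witness from $I^-$ and one from $I^+$, both lying in $\mathcal{G}\subseteq(a,b)$, we see $a<y<b$; and the whole ball $(x-\frac{1}{2^n},x+\frac{1}{2^n})$ is contained in $\mathcal{G}^+$ (each of its points is either apart from $y$, hence in $\mathcal{G}$, or near enough to $y$ to admit $y$ itself as exceptional point). Since $H$ is locally linear on $\mathcal{G}$, its restriction to $I^-$ is locally linear on $I^-$ --- shrink each witnessing neighbourhood so that it stays inside $I^-$ --- so Lemma~\ref{L:lll} provides $d_0,e_0$ with $H(z)=d_0z+e_0$ on $I^-$, and similarly $d_1,e_1$ with $H(z)=d_1z+e_1$ on $I^+$.

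Now comes the step I expect to be the real work. Since $a<y<b$, $D^1H(y)=0$; for small $h>0$ we have $y+h\in I^+$ and $y-h\in I^-$, so $\frac{H(y+h)+H(y-h)-2H(y)}{h}=\frac{(d_0+d_1)y+e_0+e_1-2H(y)}{h}+(d_1-d_0)$. If $(d_0+d_1)y+e_0+e_1-2H(y)$ were apart from $0$, the right-hand side would be unbounded as $h\downarrow0$, contradicting that the limit is $0$; hence $(d_0+d_1)y+e_0+e_1-2H(y)=0$, and, the quotient then being equal to $d_1-d_0$ for all small $h>0$, also $d_0=_\mathcal{R}d_1=:d$. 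Using in addition the continuity of $H$ at $y$ --- so that $H(y)$ equals both $\lim_{z\uparrow y,\,z\in I^-}H(z)=dy+e_0$ and $\lim_{z\downarrow y,\,z\in I^+}H(z)=dy+e_1$ --- we conclude $e_0=_\mathcal{R}e_1=:e$, and then $H(y)=dy+e$ by the identity just found. Therefore $H(z)=dz+e$ for every $z$ in $(x-\frac{1}{2^n},x+\frac{1}{2^n})$, so $H$ is linear on this ball, which lies in $\mathcal{G}^+$; as $x\in\mathcal{G}^+$ was arbitrary, $H$ is locally linear on $\mathcal{G}^+$. The delicate point is exactly this gluing: $D^1H(y)=0$ by itself yields only that the two slopes coincide and that $H(y)$ is the average of the two one-sided values of $H$ at $y$, and it is precisely the continuity of $H$ there --- the feature that Riemann's second result supplies in Cantor's setting --- that forces the two intercepts to agree and hence genuine linearity across $y$.
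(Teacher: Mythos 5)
Your proposal follows the same route as the paper: unfold $\mathcal{G}^+$ at $x$ to get a ball with one admissible exceptional point $y$, observe that $(x-\frac{1}{2^n},y)$ and $(y,x+\frac{1}{2^n})$ lie in $\mathcal{G}$, apply Lemma~\ref{L:lll} to each half, and glue the two linear pieces across $y$ using $D^1H(y)=0$. So the approach is the same; the only additions are your explicit checks that the two half-intervals are indeed subsets of $\mathcal{G}$ and that $a<y<b$.

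What you have done that the paper does not, and which is worth saying out loud, is unpack the gluing step honestly. The paper's proof states flatly ``As $D^1H(y)=0$, $d_0=d_1$ and also $e_0=e_1$,'' but, as your computation shows, $D^1H(y)=0$ alone yields only $d_0=d_1$ together with $2H(y)=(d_0+d_1)y+e_0+e_1$; a step function taking the midpoint value at $y$ has $D^1=0$ everywhere yet is not linear through $y$. Your appeal to continuity of $H$ at $y$ is exactly what is needed to conclude $e_0=e_1$. Notice, however, that Lemma~\ref{L:cv} as stated does not hypothesize that $H$ is continuous (unlike Lemma~\ref{L:cs}, which does say ``continuous function''), so your proof --- and, implicitly, the paper's --- uses a hypothesis that is missing from the statement. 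The Lemma ought to read ``continuous function $H$''; in the only place it is applied (to Riemann's doubly-primitived $G$ in Theorem~\ref{T:evfullunique}), $H$ is indeed continuous, so nothing downstream is affected, but the statement as printed is slightly too strong and your observation pins down exactly where.
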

\begin{proof} Assume: $x \in \mathcal{G}^+$. Find $n,y$ such that $|x-y|<\frac{1}{2^n}$ and $\forall z[(|x-z|< \frac{1}{2^n}\;\wedge\; z\;\#_\mathcal{R}\; y) \rightarrow z \in \mathcal{G}]$.
 Note: both $ (x-\frac{1}{2^n}, y)$ and $ (y, x+\frac{1}{2^n})$ are subsets of $\mathcal{G}$, so $H$ is locally linear on both  $ (x-\frac{1}{2^n}, y)$ and $ (y, x+\frac{1}{2^n})$. Using Lemma \ref{L:lll}, we conclude that $H$ is  linear on both  $ (x-\frac{1}{2^n}, y)$ and $ (y, x+\frac{1}{2^n})$. Find $d_0, e_0, d_1, e_1$ such that: for every $x$ in $ (x-\frac{1}{2^n}, y)$, $H(x) = d_0x +e_0$ and: for every $x$ in $ (y, x+\frac{1}{2^n})$, $H(x) = d_1x + e_1$. As $D^1H(y)=0$,  $d_0 = d_1$ and also: $e_0=e_1$.  Therefore, $H$ is linear on $(x-\frac{1}{2^n}, x+\frac{1}{2^n})$.
 
 We thus see: $H$ is locally linear on $\mathcal{G}^+$.
\end{proof}
\subsection{Repeating the operation of taking the co-derivative extension} Let $\mathcal{G}$ be an open subset of $\mathcal{R}$. We let $Ext_\mathcal{G}^{<\omega}$ be the smallest collection $\mathcal{E}$ of subsets of $\mathcal{R}$ such that \begin{enumerate}[\upshape (i)] \item $\mathcal{G} \in \mathcal{E}$, and \item for every $\mathcal{H}$ in $\mathcal{E}$, also $\mathcal{H}^+ \in \mathcal{E}$.\end{enumerate}

We let $Ext_\mathcal{G}$ be the smallest collection $\mathcal{E}$ of subsets of $\mathcal{R}$ such that \begin{enumerate}[\upshape (i)] \item $\mathcal{G} \in \mathcal{E}$, and \item for every $\mathcal{H}$ in $\mathcal{E}$, also $\mathcal{H}^+ \in \mathcal{E}$, and \item for every infinite sequence $\mathcal{H}_0, \mathcal{H}_1, \mathcal{H}_2, \ldots$ of elements of $\mathcal{E}$, also $\bigcup\limits_{n \in \mathbb{N}} \mathcal{H}_n \in \mathcal{E}$.\end{enumerate} The elements of $Ext_\mathcal{G}$ are called
\textit{the (co-derivative) extensions of $\mathcal{G}$}. 

A definition by \textit{transfinite} or \textit{generalized induction} like the definition of $Ext_\mathcal{G}$ we just gave is acceptable in intuitionistic mathematics, although Brouwer has not always  been  clear on this point. We may use the following \textit{principle of transfinite induction on $Ext_\mathcal{G}$}:

For every collection $\mathcal{E}'$ of subsets of $\mathcal{R}$, if \begin{enumerate}[\upshape (i)] \item $\mathcal{G} \in \mathcal{E}'$, and \item for every $\mathcal{H}$ in $\mathcal{E}'$, also $\mathcal{H}^+ \in \mathcal{E}'$, and \item for every infinite sequence $\mathcal{H}_0, \mathcal{H}_1, \mathcal{H}_2, \ldots$ of elements of $\mathcal{E}'$, also $\bigcup\limits_{n \in \mathbb{N}} \mathcal{H}_n \in \mathcal{E}'$,\end{enumerate} then $\mathcal{E} \subseteq \mathcal{E}'$.

\smallskip
 The elements of $Ext_\mathcal{G}^{<\omega}$ are called
\textit{the (co-derivative) extensions of $\mathcal{G}$ of finite rank}.

Note: for all $a,b$ in $\mathcal{R}$ such that $a<b$, if $\mathcal{G}\subseteq (a,b)$, then, for every $\mathcal{H}$ in $Ext_\mathcal{G}$, $\mathcal{H} \subseteq (a,b)$.

Also: for every $\mathcal{H}$ in $Ext_\mathcal{G}$, if $\mathcal{H}$ is inhabited, then so is $\mathcal{G}$.

One may prove these facts by  transfinite induction on $Ext_\mathcal{G}$.

  \begin{lemma}\label{L:cv2}For all $a,b$ in $\mathcal{R}$ such that $a<b$, for every open subset $\mathcal{G}$ of $(a,b)$, for every function $H$ from $(a,b)$ to $\mathcal{R}$, if, for all $x$ in $(a,b)$, $D^1H(x) = 0$, and  $H$ is  locally linear on $\mathcal{G}$, then $H$ is locally linear on every element of $Ext_\mathcal{G}$.\end{lemma}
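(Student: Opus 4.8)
The plan is to apply the \emph{principle of transfinite induction on $Ext_\mathcal{G}$} stated just above. Fix $a,b$ in $\mathcal{R}$ with $a<b$, an open subset $\mathcal{G}$ of $(a,b)$, and a function $H$ from $(a,b)$ to $\mathcal{R}$ such that $D^1H(x)=0$ for all $x$ in $(a,b)$ and $H$ is locally linear on $\mathcal{G}$. Let $\mathcal{E}'$ be the collection of all open subsets $\mathcal{H}$ of $(a,b)$ such that $H$ is locally linear on $\mathcal{H}$. I would verify that $\mathcal{E}'$ satisfies the three closure conditions (i), (ii), (iii) of the induction principle; it then follows that $Ext_\mathcal{G}\subseteq\mathcal{E}'$, which is exactly the assertion of the lemma.

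Condition (i), $\mathcal{G}\in\mathcal{E}'$, is one of the hypotheses. For condition (ii), suppose $\mathcal{H}\in\mathcal{E}'$. Then $\mathcal{H}^+$ is again an open subset of $\mathcal{R}$, and it is contained in $(a,b)$ by the remark (proved by transfinite induction on $Ext_\mathcal{G}$) that every element of $Ext_\mathcal{G}$ lies in $(a,b)$; and Lemma \ref{L:cv}, applied to $\mathcal{H}$, gives that $H$ is locally linear on $\mathcal{H}^+$, so $\mathcal{H}^+\in\mathcal{E}'$. For condition (iii), let $\mathcal{H}_0,\mathcal{H}_1,\ldots$ be elements of $\mathcal{E}'$. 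Then $\bigcup_{n\in\mathbb{N}}\mathcal{H}_n$ is an open subset of $(a,b)$, and it belongs to $\mathcal{E}'$ because local linearity is a purely local property: given $x\in\bigcup_{n}\mathcal{H}_n$, pick $n$ with $x\in\mathcal{H}_n$; since $H$ is locally linear on $\mathcal{H}_n$, there is $m$ with $(x-\frac{1}{2^m},x+\frac{1}{2^m})\subseteq\mathcal{H}_n\subseteq\bigcup_{n}\mathcal{H}_n$ and $H$ linear on this interval.

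I do not expect a genuine obstacle: the mathematical substance has already been discharged in Lemma \ref{L:cv} (which itself rests on Lemma \ref{L:lll} and Riemann's second result). The only points requiring a word of care are bookkeeping: making sure the notion ``locally linear on $\mathcal{H}$'' is meaningful for each $\mathcal{H}$ in $Ext_\mathcal{G}$, i.e.\ that each such $\mathcal{H}$ is an open subset of $(a,b)$ (noted right before the statement), and checking closure under countable unions, which is immediate from the local character of the definition. Hence $H$ is locally linear on every element of $Ext_\mathcal{G}$.
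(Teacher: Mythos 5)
Your proof is correct and takes the same route as the paper, whose own proof is just the one-line remark that one applies Lemma \ref{L:cv} together with transfinite induction on $Ext_\mathcal{G}$; you have supplied exactly those details (the induction class $\mathcal{E}'$, the successor step via Lemma \ref{L:cv}, and closure under countable unions by the local character of local linearity).
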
 
  \begin{proof} The proof uses Lemma \ref{L:cv} and is by  transfinite induction on $Ext_\mathcal{G}$. \end{proof}
  Let $a,b$ in $\mathcal{R}$ be given such that $a<b$ and let $\mathcal{G}$  be an open subset of $(a,b)$. $\mathcal{G}$ is called \textit{swiftly full in $(a,b)$} if and only if $(a,b) \in Ext_\mathcal{G}^{<\omega}$. $\mathcal{G}$ is called \textit{eventually full in $(a,b)$} if and only if $(a,b) \in Ext_\mathcal{G}$.
 \footnote{ Cantor says: \textit{$[a,b]\setminus \mathcal{G}$ is reducible} where we use the expression: \textit{$\mathcal{G}$ is swiftly/eventually full in $(a,b)$}.} 
 
 Note: if $\mathcal{G}$ is eventually full in $(a,b)$, then $\mathcal{G}$ is inhabited. 
  
  \smallskip
  
  For every open subset $\mathcal{G}$ of $(-\pi,\pi)$, for each $x$ in $(-\pi,\pi)$,  we  let $x+_\pi\mathcal{G}$ be the set of all $t$ in $(-\pi,\pi)$ such that either: $-x+t\in\mathcal{G}$ or: $-x+t+2\pi\in\mathcal{G}$ or: $-x+t-2\pi\in\mathcal{G}$.
  
  We need the following  observation.
  
  \begin{lemma}\label{L:help1}For every open subset $\mathcal{G}$ of $(-\pi,\pi)$, for every $x$ in $(-\pi,\pi)$,
  \begin{enumerate}[\upshape (i)] \item  $x+_\pi\mathcal{G}$ is an open subset of $(-\pi,\pi)$, and \item $Ext_{x+_\pi\mathcal{G}} = \{x+_\pi\mathcal{Y}|\mathcal{Y} \in Ext_\mathcal{G}\}$, and \item  if $\mathcal{G}$ is eventually full in $(-\pi,\pi)$, then $x+_\pi\mathcal{G}$ is eventually full in $(-\pi,\pi)$. \end{enumerate} \end{lemma}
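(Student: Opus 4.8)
All three parts rest on the observation that $\mathcal{Y}\mapsto x+_\pi\mathcal{Y}$ is, locally, nothing but a translation by one of the three numbers $-x$, $-x+2\pi$, $-x-2\pi$; the threefold disjunction in the definition is there precisely to cope with the single point at which rotation by $x$ identifies the two ends of $(-\pi,\pi)$. For \emph{(i)}, let $t\in x+_\pi\mathcal{G}$, so that one of $-x+t$, $-x+t+2\pi$, $-x+t-2\pi$ lies in $\mathcal{G}$; by the evident symmetry assume it is $-x+t$. Since $\mathcal{G}$ is open and $t\in(-\pi,\pi)$, I would pick $n$ so large that $(-x+t-\frac{1}{2^n},-x+t+\frac{1}{2^n})\subseteq\mathcal{G}$ and also $(t-\frac{1}{2^n},t+\frac{1}{2^n})\subseteq(-\pi,\pi)$; then every $s$ in the latter interval has $-x+s\in\mathcal{G}$, hence $s\in x+_\pi\mathcal{G}$. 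Thus $x+_\pi\mathcal{G}$ is open, and it is contained in $(-\pi,\pi)$ by construction.

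For \emph{(ii)} I would first record two commutation properties of the transport operation. First, it distributes over countable unions, $x+_\pi(\bigcup_n\mathcal{H}_n)=\bigcup_n(x+_\pi\mathcal{H}_n)$, which is immediate once the threefold disjunction is pushed through the union. Second, it essentially commutes with the co-derivative extension: for open $\mathcal{H}\subseteq(-\pi,\pi)$ the sets $x+_\pi(\mathcal{H}^+)$ and $(x+_\pi\mathcal{H})^+$ agree at every $t$ for which $-x+t$ is apart from both $\pi$ and $-\pi$, because there exactly one of the three disjuncts in the definition is forced and the co-derivative near $t$ is computed by the corresponding translation; so they can differ only at the one glueing point, and since the co-derivative of a set is unchanged by adjoining or deleting a single point it follows, in particular, that $\mathcal{H}^+=(-\pi,\pi)$ implies $(x+_\pi\mathcal{H})^+=(-\pi,\pi)$. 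Granting these, the equality $Ext_{x+_\pi\mathcal{G}}=\{x+_\pi\mathcal{Y}|\mathcal{Y}\in Ext_{\mathcal{G}}\}$ is obtained by the two natural transfinite inductions: the inclusion $\supseteq$ by applying the principle of transfinite induction on $Ext_{\mathcal{G}}$ to the collection $\{\mathcal{Y}|x+_\pi\mathcal{Y}\in Ext_{x+_\pi\mathcal{G}}\}$, and $\subseteq$ by applying the principle of transfinite induction on $Ext_{x+_\pi\mathcal{G}}$ to the collection $\{x+_\pi\mathcal{Y}|\mathcal{Y}\in Ext_{\mathcal{G}}\}$, the required closure of each collection under $\mathcal{H}\mapsto\mathcal{H}^+$ and under countable unions being exactly what the two commutation properties supply, with the one-point discrepancy at the glueing point absorbed by one extra application of $\mathcal{H}\mapsto\mathcal{H}^+$ (which both $Ext_{\mathcal{G}}$ and $Ext_{x+_\pi\mathcal{G}}$ are closed under).

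Part \emph{(iii)} then falls out of \emph{(ii)}: if $\mathcal{G}$ is eventually full then $(-\pi,\pi)\in Ext_{\mathcal{G}}$, hence $x+_\pi(-\pi,\pi)\in Ext_{x+_\pi\mathcal{G}}$; but $x+_\pi(-\pi,\pi)$ is all of $(-\pi,\pi)$ save at most the glueing point, so $(x+_\pi(-\pi,\pi))^+=(-\pi,\pi)$, and as $Ext_{x+_\pi\mathcal{G}}$ is closed under the co-derivative extension we get $(-\pi,\pi)\in Ext_{x+_\pi\mathcal{G}}$, i.e.\ $x+_\pi\mathcal{G}$ is eventually full in $(-\pi,\pi)$. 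I expect the one real obstacle to be precisely the behaviour at the glueing point: it is constructively undecidable whether a given $t$ equals, or lies left or right of, the point where $\pi$ and $-\pi$ are identified, so the case analyses behind the two commutation properties must be routed through the co-transitivity of $<_\mathcal{R}$ and $\#_\mathcal{R}$, and the resulting one-point mismatch must be cleaned up by the constructively valid fact that a set and the set obtained by adjoining one point to it have the same co-derivative extension. Everything else is bookkeeping with the principle of transfinite induction on $Ext_{\mathcal{G}}$.
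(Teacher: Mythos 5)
Part (i) is fine, but there is a genuine gap in your parts (ii) and (iii), and indeed part (ii), read as an equality of the two collections, is \emph{false}; since you then derive (iii) from (ii), the derivation of (iii) is not sound as written, although (iii) itself is true. (The paper offers no proof to compare against: it declares the lemma ``straightforward and left to the reader.'') The decisive wrong step is your claim that ``the co-derivative of a set is unchanged by adjoining or deleting a single point,'' from which you infer that $\mathcal{H}^+=(-\pi,\pi)$ implies $(x+_\pi\mathcal{H})^+=(-\pi,\pi)$. Take $\mathcal{H}:=\{z\in(-\pi,\pi)\mid\forall n[z\;\#_\mathcal{R}\;(-\pi+\frac{1}{n})]\}$, $x:=1$, $t^*:=1-\pi$. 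Then $\mathcal{H}$ is open (every $z\in(-\pi,\pi)$ is apart from $-\pi$, hence from all but finitely many $-\pi+\frac{1}{n}$) and $\mathcal{H}^+=(-\pi,\pi)$, but $x+_\pi\mathcal{H}$ omits $t^*$ \emph{and} all the points $t^*+\frac{1}{n}$, which accumulate at $t^*$, so no single puncture works and $t^*\notin(x+_\pi\mathcal{H})^+$: translating across the glueing point welds the two one-sided punctured neighbourhoods of $-\pi$ and $\pi$ into a two-sided neighbourhood of $t^*$ in which the omitted points need not be isolated. For (ii) itself: the glueing point $t^*$ is never in $x+_\pi\mathcal{Y}$ for \emph{any} $\mathcal{Y}\subseteq(-\pi,\pi)$, because $-x+t^*$ and $-x+t^*\pm2\pi$ are $\pm\pi$ or $\pm3\pi$, all outside $(-\pi,\pi)$; so $(-\pi,\pi)\notin\{x+_\pi\mathcal{Y}\mid\mathcal{Y}\in Ext_\mathcal{G}\}$ once $x\;\#_\mathcal{R}\;0$, whereas $(-\pi,\pi)\in Ext_{x+_\pi\mathcal{G}}$ whenever (iii) holds, e.g.\ already for $\mathcal{G}=(-\pi,\pi)$. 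Thus (ii) and (iii) cannot both be correct, and the route (ii)~$\Rightarrow$~(iii) is blocked.

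What \emph{does} hold, and is all that the paper uses later in the proof of Theorem~\ref{T:evfullunique}, is (iii), and the right intermediate statement to prove is a one-sided domination rather than an equality: by transfinite induction on $Ext_\mathcal{G}$, for every $\mathcal{Y}\in Ext_\mathcal{G}$ there is $\mathcal{Z}\in Ext_{x+_\pi\mathcal{G}}$ with $x+_\pi\mathcal{Y}\subseteq\mathcal{Z}$. The base case is $\mathcal{Z}:=x+_\pi\mathcal{G}$; the union step uses your first commutation property $x+_\pi\bigcup_n\mathcal{Y}_n=\bigcup_n(x+_\pi\mathcal{Y}_n)$; and the ${}^+$-step needs only the \emph{one-sided} inclusion $x+_\pi(\mathcal{Y}^+)\subseteq(x+_\pi\mathcal{Y})^+$ (proved as in your part (i), after deciding, for the given puncture $y$, whether $|(-x+t)-y|<\frac{1}{2^m}$ or $|(-x+t)-y|>\frac{1}{2^{m+1}}$) together with the monotonicity of ${}^+$ under $\subseteq$: if $x+_\pi\mathcal{Y}\subseteq\mathcal{Z}$ then $x+_\pi(\mathcal{Y}^+)\subseteq(x+_\pi\mathcal{Y})^+\subseteq\mathcal{Z}^+\in Ext_{x+_\pi\mathcal{G}}$. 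Taking $\mathcal{Y}=(-\pi,\pi)$ yields $\mathcal{Z}\in Ext_{x+_\pi\mathcal{G}}$ containing $x+_\pi(-\pi,\pi)=\{t\in(-\pi,\pi)\mid t\;\#_\mathcal{R}\;(x-\pi)\;\wedge\;t\;\#_\mathcal{R}\;(x+\pi)\}$. Since $x-\pi$ and $x+\pi$ are $2\pi$ apart, at most one of them can lie near any given $t\in(-\pi,\pi)$, and the same ``close-or-apart'' decision produces a puncture showing $(x+_\pi(-\pi,\pi))^+=(-\pi,\pi)$; hence $\mathcal{Z}^+=(-\pi,\pi)\in Ext_{x+_\pi\mathcal{G}}$, which is (iii). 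This is close to your sketch in spirit, but it rests on a genuine one-sided inclusion at the ${}^+$-step, not on the incorrect one-point principle, and it yields a domination rather than the (false) identity (ii).
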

 \begin{proof} The proof is straightforward and left to the reader. \end{proof} 
\begin{lemma}\label{L:prepext} For all open subsets $\mathcal{G}_0, \mathcal{G}_1$ of  $\mathcal{R}$, $({\mathcal{G}_0})^+\cap\mathcal{G}_1\subseteq (\mathcal{G}_0\cap \mathcal{G}_1)^+$.\end{lemma}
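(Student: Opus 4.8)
The plan is to unfold the definitions of $\mathcal{G}^+$ and of openness and then argue by a case distinction obtained from the co-transitivity of $<_\mathcal{R}$. So let $x$ in $(\mathcal{G}_0)^+\cap\mathcal{G}_1$ be given. First I would use $x\in(\mathcal{G}_0)^+$ to find $n$ and $y$ with $|x-y|<\frac{1}{2^n}$ and $\forall z[(|x-z|<\frac{1}{2^n}\;\wedge\;z\;\#_\mathcal{R}\;y)\rightarrow z\in\mathcal{G}_0]$, and then use that $\mathcal{G}_1$ is open to find $m$ with $(x-\frac{1}{2^m},x+\frac{1}{2^m})\subseteq\mathcal{G}_1$. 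Let $p:=\max(n,m)$, so that $\frac{1}{2^p}\le\frac{1}{2^n}$ and $\frac{1}{2^p}\le\frac{1}{2^m}$.

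Classically one would now simply shrink to radius $\frac{1}{2^p}$ and keep $y$ as the single possible exceptional point. The obstacle, and the only delicate point of the proof, is that constructively one cannot decide whether $y$ lies in the smaller interval; I would therefore apply co-transitivity of $<_\mathcal{R}$ to the (true) inequality $\frac{1}{2^{p+2}}<\frac{1}{2^{p+1}}$, obtaining: either $|x-y|<\frac{1}{2^{p+1}}$, or $\frac{1}{2^{p+2}}<|x-y|$.

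In the first case I would exhibit $p+1$ and $y$ as witnesses for $x\in(\mathcal{G}_0\cap\mathcal{G}_1)^+$: indeed $|x-y|<\frac{1}{2^{p+1}}$, and for every $z$ with $|x-z|<\frac{1}{2^{p+1}}$ and $z\;\#_\mathcal{R}\;y$ we have $z\in\mathcal{G}_0$ (since $\frac{1}{2^{p+1}}<\frac{1}{2^n}$) and $z\in\mathcal{G}_1$ (since $(x-\frac{1}{2^{p+1}},x+\frac{1}{2^{p+1}})\subseteq(x-\frac{1}{2^m},x+\frac{1}{2^m})\subseteq\mathcal{G}_1$). In the second case -- the point where the argument genuinely departs from the classical one -- I would observe that the interval $(x-\frac{1}{2^{p+3}},x+\frac{1}{2^{p+3}})$ has no exceptional point at all: if $|x-z|<\frac{1}{2^{p+3}}$, then $|z-y|\ge|x-y|-|x-z|>\frac{1}{2^{p+2}}-\frac{1}{2^{p+3}}=\frac{1}{2^{p+3}}>0$, so $z\;\#_\mathcal{R}\;y$ and hence $z\in\mathcal{G}_0$; as also $z\in\mathcal{G}_1$, this shows $(x-\frac{1}{2^{p+3}},x+\frac{1}{2^{p+3}})\subseteq\mathcal{G}_0\cap\mathcal{G}_1$, so that $x\in\mathcal{G}_0\cap\mathcal{G}_1\subseteq(\mathcal{G}_0\cap\mathcal{G}_1)^+$ (one may equally take $p+3$ and $x$ as witnesses). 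Either way $x\in(\mathcal{G}_0\cap\mathcal{G}_1)^+$, as desired. Note that only the openness of $\mathcal{G}_1$ is used.
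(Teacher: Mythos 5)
Your proof is correct and takes essentially the paper's approach: unfold the definitions, then invoke co-transitivity of $<_\mathcal{R}$ to decide whether the possible exceptional point $y$ is near enough to $x$ to remain the exceptional point inside a shrunk neighbourhood, or is far enough away that a shrunk neighbourhood lies entirely in $\mathcal{G}_0\cap\mathcal{G}_1$. The paper first arranges $y\;\#_\mathcal{R}\;x-\frac{1}{2^m}$ and $y\;\#_\mathcal{R}\;x+\frac{1}{2^m}$ and then splits into three cases on the position of $y$; this is a cosmetic variant of your two-way split on the size of $|x-y|$.
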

   
  \begin{proof} Assume: $x \in (\mathcal{G}_0)^+\cap \mathcal{G}_1$. Find $n, y$ such that $|x-y|<\frac{1}{2^n}$ and $(x-\frac{1}{2^n}, y)\cup(y, x+\frac{1}{2^n})\subseteq \mathcal{G}_0$. Find $m>n$ such that $(x-\frac{1}{2^m}, x +\frac{1}{2^m})\subseteq \mathcal{G}_1
  $ and $y \;\#_\mathcal{R}\; x-\frac{1}{2^m}$ and  $y\;\#_\mathcal{R}\; x+\frac{1}{2^m}$. Then \textit{either:} $y<x -\frac{1}{2^m}$, and: $x\in \mathcal{G}_0\cap\mathcal{G}_1$, \textit{or:} $x-\frac{1}{2^m} < y<x +\frac{1}{2^m}$ and: $x \in \bigl(\mathcal{G}_0\cap\mathcal{G}_1\bigr)^+$, \textit{or:} $y>x +\frac{1}{2^m}$, and: $x\in \mathcal{G}_0\cap\mathcal{G}_1$.  In any case: $x \in \bigl(\mathcal{G}_0 \cap\mathcal{G}_1\bigr)^+$.\end{proof}

 \begin{lemma}\label{L:help2}Let $a,b$ in $\mathcal{R}$ be given such that $a<b$. 
 \begin{enumerate}[\upshape (i)]\item For all open subsets $\mathcal{G}_0, \mathcal{G}_1$ of $(a,b)$, for all $\mathcal{X}$ in $Ext_{\mathcal{G}_0}$, there exists $\mathcal{Y}$ in  $Ext_{\mathcal{G}_0\cap\mathcal{G}_1}$ such that $\mathcal{X} \cap \mathcal{G}_1 \subseteq \mathcal{Y}$. \item For all open subsets $\mathcal{G}_0, \mathcal{G}_1$ of $(a,b)$, if both $\mathcal{G}_0$ and $\mathcal{G}_1$ are eventually full in $(a,b)$, then $\mathcal{G}_0 \cap \mathcal{G}_1$ is eventually full in $(a,b)$. \end{enumerate}\end{lemma}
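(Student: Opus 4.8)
The plan is to prove (i) by transfinite induction on $Ext_{\mathcal{G}_0}$, and then to derive (ii) as a quick corollary by applying (i) twice together with Lemma \ref{L:prepext}. For part (i), fix open subsets $\mathcal{G}_0,\mathcal{G}_1$ of $(a,b)$ and consider the collection $\mathcal{E}'$ of all subsets $\mathcal{X}$ of $(a,b)$ for which there exists $\mathcal{Y}$ in $Ext_{\mathcal{G}_0\cap\mathcal{G}_1}$ with $\mathcal{X}\cap\mathcal{G}_1\subseteq\mathcal{Y}$. I must check the three closure conditions from the principle of transfinite induction on $Ext_{\mathcal{G}_0}$. The base case $\mathcal{G}_0\in\mathcal{E}'$ is immediate, taking $\mathcal{Y}:=\mathcal{G}_0\cap\mathcal{G}_1$, which lies in $Ext_{\mathcal{G}_0\cap\mathcal{G}_1}$ by clause (i) of that definition. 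The limit case is also routine: if $\mathcal{X}_0,\mathcal{X}_1,\ldots$ are in $\mathcal{E}'$ with witnesses $\mathcal{Y}_0,\mathcal{Y}_1,\ldots$ in $Ext_{\mathcal{G}_0\cap\mathcal{G}_1}$, then $\bigcup_n\mathcal{Y}_n$ is again in $Ext_{\mathcal{G}_0\cap\mathcal{G}_1}$ by clause (iii), and it contains $\bigl(\bigcup_n\mathcal{X}_n\bigr)\cap\mathcal{G}_1=\bigcup_n(\mathcal{X}_n\cap\mathcal{G}_1)$.

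The successor step is where the real content sits, and it is exactly what Lemma \ref{L:prepext} was proved for. Suppose $\mathcal{X}\in\mathcal{E}'$, with witness $\mathcal{Y}\in Ext_{\mathcal{G}_0\cap\mathcal{G}_1}$, so $\mathcal{X}\cap\mathcal{G}_1\subseteq\mathcal{Y}$. I want a witness for $\mathcal{X}^+$. First note $\mathcal{X}^+\cap\mathcal{G}_1\subseteq(\mathcal{X}\cap\mathcal{G}_1)^+$ by Lemma \ref{L:prepext} (applied with $\mathcal{G}_0:=\mathcal{X}$, $\mathcal{G}_1:=\mathcal{G}_1$), and then $(\mathcal{X}\cap\mathcal{G}_1)^+\subseteq\mathcal{Y}^+$ since the co-derivative-extension operation is monotone (from $\mathcal{X}\cap\mathcal{G}_1\subseteq\mathcal{Y}$ one reads off directly from the definition of $(\cdot)^+$ that $(\mathcal{X}\cap\mathcal{G}_1)^+\subseteq\mathcal{Y}^+$). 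Finally $\mathcal{Y}^+\in Ext_{\mathcal{G}_0\cap\mathcal{G}_1}$ by clause (ii) of that definition, so $\mathcal{Y}^+$ is the desired witness for $\mathcal{X}^+$, and $\mathcal{X}^+\in\mathcal{E}'$. By transfinite induction, $Ext_{\mathcal{G}_0}\subseteq\mathcal{E}'$, which is precisely statement (i).

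For (ii), assume both $\mathcal{G}_0$ and $\mathcal{G}_1$ are eventually full in $(a,b)$, i.e.\ $(a,b)\in Ext_{\mathcal{G}_0}$ and $(a,b)\in Ext_{\mathcal{G}_1}$. Apply (i) with $\mathcal{X}:=(a,b)\in Ext_{\mathcal{G}_0}$ to obtain $\mathcal{Y}\in Ext_{\mathcal{G}_0\cap\mathcal{G}_1}$ with $(a,b)\cap\mathcal{G}_1\subseteq\mathcal{Y}$, that is $\mathcal{G}_1\subseteq\mathcal{Y}\subseteq(a,b)$. Now I would like to conclude that $\mathcal{Y}$, being a co-derivative extension of $\mathcal{G}_0\cap\mathcal{G}_1$ that contains $\mathcal{G}_1$, is itself eventually full because $\mathcal{G}_1$ is; concretely, one runs through $Ext_{\mathcal{G}_1}$ and checks by transfinite induction that each element of $Ext_{\mathcal{G}_1}$ is contained in some element of $Ext_{\mathcal{G}_0\cap\mathcal{G}_1}$ — but this is just (i) again, now with the roles of the indices swapped and starting from the inclusion $\mathcal{G}_0\cap\mathcal{G}_1\subseteq\mathcal{G}_1$ viewed the other way. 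More cleanly: applying (i) in the symmetric form to $\mathcal{G}_1$ and $\mathcal{G}_0$, with $\mathcal{X}:=(a,b)\in Ext_{\mathcal{G}_1}$, gives $\mathcal{Z}\in Ext_{\mathcal{G}_1\cap\mathcal{G}_0}=Ext_{\mathcal{G}_0\cap\mathcal{G}_1}$ with $(a,b)=(a,b)\cap\mathcal{G}_0\subseteq\mathcal{Z}\subseteq(a,b)$, hence $\mathcal{Z}=(a,b)$ and $(a,b)\in Ext_{\mathcal{G}_0\cap\mathcal{G}_1}$, i.e.\ $\mathcal{G}_0\cap\mathcal{G}_1$ is eventually full in $(a,b)$. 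The one point that deserves care — the main (though modest) obstacle — is verifying the monotonicity of $(\cdot)^+$ and the correct bookkeeping of which set plays the role of $\mathcal{G}_0$ versus $\mathcal{G}_1$ in the successor step, since Lemma \ref{L:prepext} is stated asymmetrically; everything else is a direct unwinding of the inductive definition of $Ext$.
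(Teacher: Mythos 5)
Your proof of part (i) is correct and follows the paper exactly: transfinite induction on $Ext_{\mathcal{G}_0}$, with Lemma \ref{L:prepext} supplying the successor step via $\mathcal{X}^+\cap\mathcal{G}_1\subseteq(\mathcal{X}\cap\mathcal{G}_1)^+\subseteq\mathcal{Y}^+$.

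Your part (ii), however, contains a genuine error, and it is precisely at the spot you flag as ``the one point that deserves care.'' After obtaining $\mathcal{Y}\in Ext_{\mathcal{G}_0\cap\mathcal{G}_1}$ with $\mathcal{G}_1\subseteq\mathcal{Y}$, you correctly say that what remains is to show, by transfinite induction on $Ext_{\mathcal{G}_1}$, that every $\mathcal{X}\in Ext_{\mathcal{G}_1}$ is contained in some element of $Ext_{\mathcal{G}_0\cap\mathcal{G}_1}$. But then you claim this is ``just (i) again'' with the roles swapped, and in the ``more cleanly'' version you write $(a,b)=(a,b)\cap\mathcal{G}_0$. That identity is false in general: $\mathcal{G}_0$ is merely an open \emph{subset} of $(a,b)$ that is eventually full; it need not equal $(a,b)$. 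What (i) with the roles swapped actually yields, taking $\mathcal{X}=(a,b)\in Ext_{\mathcal{G}_1}$, is some $\mathcal{Z}\in Ext_{\mathcal{G}_0\cap\mathcal{G}_1}$ with $\mathcal{G}_0=(a,b)\cap\mathcal{G}_0\subseteq\mathcal{Z}$, which does not give $(a,b)\subseteq\mathcal{Z}$.

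The second transfinite induction needed here is therefore \emph{not} an instance of (i). It proves the stronger inclusion $\mathcal{X}\subseteq\mathcal{Z}$ (not $\mathcal{X}\cap\mathcal{G}_0\subseteq\mathcal{Z}$), and it can do so only because the first application of (i) feeds in the extra hypothesis $\mathcal{G}_1\subseteq\mathcal{Y}$: this supplies the base case ($\mathcal{G}_1\subseteq\mathcal{Y}\in Ext_{\mathcal{G}_0\cap\mathcal{G}_1}$), and then the successor step uses plain monotonicity of $(\cdot)^+$ (if $\mathcal{X}\subseteq\mathcal{Z}$ then $\mathcal{X}^+\subseteq\mathcal{Z}^+$) rather than Lemma \ref{L:prepext}, while the limit step is again a union. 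Your ``more cleanly'' shortcut discards the datum $\mathcal{G}_1\subseteq\mathcal{Y}$ entirely, and without it the argument collapses. So: keep your first, less clean, sketch, but do not identify it with (i) --- write out the separate induction with the correct base case coming from $\mathcal{Y}$.
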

  \begin{proof} (i) We use  transfinite induction on $Ext_{\mathcal{G}_0}$.
  
   \begin{enumerate}[\upshape 1.] \item $\mathcal{G}_0\cap\mathcal{G}_1\subseteq\mathcal{G}_0\cap\mathcal{G}_1$.
   
   \item Assume: $\mathcal{X} \in Ext_{\mathcal{G}_0}$ and $\mathcal{Y} \in Ext_{\mathcal{G}_0\cap \mathcal{G}_1}$ and $\mathcal{X} \cap \mathcal{G}_1 \subseteq \mathcal{Y}$. Use Lemma \ref{L:prepext} and note: $\mathcal{X}^+ \cap \mathcal{G}_1 \subseteq (\mathcal{X} \cap \mathcal{G}_1)^+\subseteq \mathcal{Y}^+$. \item Let $\mathcal{X}_0, \mathcal{X}_1, \ldots$ and $\mathcal{Y}_0, \mathcal{Y}_1, \ldots$ be infinite sequences of elements of $Ext_{\mathcal{G}_0}$ and $Ext_{\mathcal{G}_0 \cap \mathcal{G}_1}$, respectively, such that, for each $n$, $\mathcal{X}_n \cap \mathcal{G}_1 \subseteq \mathcal{Y}_n$. Note: $\bigcup\limits_{n \in \mathbb{N}} \mathcal{X}_n \in Ext_{\mathcal{G}_0}$ and  $\bigcup\limits_{n \in \mathbb{N}} \mathcal{Y}_n \in Ext_{\mathcal{G}_0\cap\mathcal{G}_1}$ and  $\bigl(\bigcup\limits_{n \in \mathbb{N}} \mathcal{X}_n\bigr)\cap \mathcal{G}_1 \subseteq \bigcup\limits_{n \in \mathbb{N}} \mathcal{Y}_n$.\end{enumerate} 
   
   \smallskip (ii) Assume: $\mathcal{G}_0$ and $\mathcal{G}_1$ are eventually full in $(a,b)$. Then: $(a,b) \in Ext_{\mathcal{G}_0}$. Using (i), find $\mathcal{Y}$ in $Ext_{\mathcal{G}_0\cap\mathcal{G}_1}$ such that $\mathcal{G}_1 \subseteq \mathcal{Y}$. One may prove now, by transfinite induction on  $Ext_{\mathcal{G}_1}$:   for each $\mathcal{X}$ in $Ext_{\mathcal{G}_1}$ there exists $\mathcal{Z}$ in $Ext_{\mathcal{G}_0\cap\mathcal{G}_1}$ such that $\mathcal{X} \subseteq \mathcal {Z}$.  In particular, there exists $\mathcal{Z}$ in $Ext_{\mathcal{G}_0 \cap\mathcal{G}_1}$ such that $(a,b)\subseteq \mathcal{Z}$, and, therefore: $(a,b) \in  Ext_{\mathcal{G}_0 \cap \mathcal{G}_1}$, that is:   $\mathcal{G}_0 \cap \mathcal{G}_1$ is eventually full in $(a,b)$.   \end{proof} \begin{theorem}\label{T:evfullunique}Every  open subset of $(-\pi,\pi)$ that is eventually full in $(\pi,\pi)$ guarantees uniqueness. \end{theorem}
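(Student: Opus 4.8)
The plan is to follow the pattern of the proof of Theorem~\ref{T:cucofinite}, replacing the special linear-on-the-complement argument there by the machinery of co-derivative extensions developed in this section. So let $b_0, a_1, b_1, \ldots$ be an infinite sequence of reals and let $\mathcal{G}$ be an open subset of $(-\pi,\pi)$ that is eventually full in $(-\pi,\pi)$; assume that $F(x) = \frac{b_0}{2} + \sum_{n>0} a_n \sin nx + b_n \cos nx = 0$ for all $x$ in $\mathcal{G}$. As before I would first dispose of the case where the sequence is bounded, and then remove that provisional assumption by the Riemann--Kronecker trick, exactly as in Theorems~\ref{T:cu} and~\ref{T:cucofinite}. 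Under the boundedness assumption, form the twice-primitived function
$$G(x) := \frac{1}{4}b_0x^2 + \sum_{n>0} \frac{-a_n}{n^2}\sin nx + \frac{-b_n}{n^2}\cos nx,$$
which is defined and continuous on all of $[-\pi,\pi]$. By Riemann's first result, $D^2G(x)=0$ for all $x$ in $\mathcal{G}$; by Corollary~\ref{cor:cs} (applied on small subintervals sitting inside $\mathcal{G}$), $G$ is locally linear on $\mathcal{G}$. By Riemann's second result, $D^1G(x)=0$ for \emph{all} $x$ in $(-\pi,\pi)$.

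Now I would invoke Lemma~\ref{L:cv2}: since $D^1G$ vanishes throughout $(-\pi,\pi)$ and $G$ is locally linear on $\mathcal{G}$, the function $G$ is locally linear on every element of $Ext_\mathcal{G}$. Because $\mathcal{G}$ is eventually full, $(-\pi,\pi) \in Ext_\mathcal{G}$, so $G$ is locally linear on $(-\pi,\pi)$. By Lemma~\ref{L:lll}, $G$ is therefore linear on $(-\pi,\pi)$, hence on $[-\pi,\pi]$ by continuity. From this point the argument is identical to the endgame of Theorem~\ref{T:cu}: one uses $G(\pi)=G(-\pi)$ to conclude the linear function is constant, then computes the Fourier coefficients of the constant function by integrating the uniformly convergent series $G_N \to G$ term by term against $\sin nx$ and $\cos nx$, obtaining $a_n=0$ and $b_n = (-1)^n b_0$ for all $n>0$; since $\lim_n b_n = 0$ (from $x=0$ in Riemann's condition, which $F\equiv 0$ on $\mathcal{G}$ delivers once we know $\mathcal{G}$ is inhabited, or from the general theory), all coefficients vanish.

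To drop the boundedness assumption I would repeat the substitution $K(t) := F(x+t) + F(x-t) = b_0 + 2\sum_{n>0}(a_n\sin nx + b_n\cos nx)\cos nt$ for a suitable $x$. The subtlety compared with Theorem~\ref{T:cucofinite} is which $x$ to pick: there, $x$ had only to avoid finitely many bad points, whereas here I need $t \mapsto K(t)$ to vanish on an eventually-full open subset of $(-\pi,\pi)$. The natural choice is to fix any $x$ in the inhabited set $\mathcal{G}$ and observe that $K(t)=0$ whenever both $x+t$ and $x-t$ lie in $\mathcal{G}$; the set of such $t$ contains an eventually-full open set, using Lemma~\ref{L:help1} (translation/reflection preserve eventual fullness) together with Lemma~\ref{L:help2}(ii) (the intersection of two eventually-full open sets is eventually full) applied to $(-x +_\pi \mathcal{G})$ and its reflection. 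Applying the bounded case to $K$ then gives $b_0 = 0$ and $a_n\sin nx + b_n\cos nx = 0$ for each $n>0$; letting $x$ range over $\mathcal{G}$ — which, being eventually full, is "rich enough" — forces $a_n = b_n = 0$.

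The main obstacle I anticipate is the last point in the unbounded case: verifying carefully that the set $\{t \in (-\pi,\pi) : x+t \in \mathcal{G} \text{ and } x-t \in \mathcal{G}\}$ is eventually full in $(-\pi,\pi)$ (this is where Lemmas~\ref{L:help1} and~\ref{L:help2} earn their keep), and then extracting $a_n = b_n = 0$ from the family of identities $a_n \sin nx + b_n \cos nx = 0$ indexed by $x \in \mathcal{G}$ — one needs two values $x, x'$ in $\mathcal{G}$ with $\sin nx \cos nx' - \sin nx' \cos nx$ apart from $0$, and finding such a pair constructively requires knowing $\mathcal{G}$ contains enough points, which again follows from eventual fullness (e.g.\ $\mathcal{G}$ is dense, or at least co-enumerable-dense, in $(-\pi,\pi)$). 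Everything else is a routine reassembly of results already proved.
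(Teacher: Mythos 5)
Your proposal matches the paper's proof step for step: the bounded case via Riemann's first and second results, Corollary~\ref{cor:cs}, Lemmas~\ref{L:cv2} and~\ref{L:lll}, then the Riemann--Kronecker substitution handled via Lemmas~\ref{L:help1} and~\ref{L:help2} for the unbounded case. The only place you overcomplicate is the very last extraction of $a_n = b_n = 0$: no density property of $\mathcal{G}$ is needed, since eventual fullness implies $\mathcal{G}$ is inhabited, so being open it contains a whole interval $(c,d)$, on which $a_n\sin nx + b_n\cos nx = 0$ trivially forces $a_n = b_n = 0$.
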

   \begin{proof} Let $\mathcal{G}$ be an open subset of $(-\pi, \pi)$ that is eventually full.
    Let $b_0,a_1,b_1, \ldots$ be given such that, for all $x$ in $\mathcal{G}$, $F(x):=\frac{b_0}{2}+\sum_{n>0}a_n\sin nx + b_n\cos nx=0$.
   
   \smallskip
    \textit{We make a provisional assumption}: the infinite sequence $b_0, a_1, b_1, \ldots$ is bounded.
   
   The function $$G(x) := \frac{1}{4}b_0x^2 +\sum_{n>0}\frac{- a_n}{n^2} \sin nx + \frac{-b_n}{n^2} \cos nx$$ is therefore defined everywhere on $[-\pi,\pi]$ and everywhere continuous, 
    and, according to Riemann's first result, for all $x$ in $\mathcal{G}$, $D^2G(x) =0$.  Using Corollary \ref{cor:cs}, we conclude: $G$ is locally linear on $\mathcal{G}$. Riemann's second result 
  is  that, for all $x$ in $[-\pi,\pi]$, $D^1G(x) =0$. Using Lemma \ref{L:cv2}, we conclude: $G$ is locally linear on every co-derivative extension of $\mathcal{G}$. As $\mathcal{G}$ is eventually full, $(-\pi,\pi)$ is such an extension, and $G$ is locally linear on $(-\pi,\pi)$, and therefore, according to Lemma \ref{L:lll}, $G$ is linear on $[-\pi, \pi]$.
   As in the proof of Theorem \ref{T:cu}, we conclude: $b_0 =0$ and, for all $n>0$, $a_n = b_n = 0$.
  
   \smallskip \textit{One may do without  the  provisional assumption}. We no longer assume that  the infinite sequence $b_0, a_1, b_1, \ldots$ is bounded.
   
   Again using the suggestion made by Riemann and  Kronecker, we reason as follows.
   
    Assume: $x \in \mathcal{G}$. Define for each $t$ in $[-\pi, \pi]$, $$ K(t) := F(x+t) + F(x-t)$$ and note: $$K(t) = b_0 + 2\sum_{n>0}(a_n\sin nx + b_n \cos nx)\cos nt,$$

    Note: for each $t$ in $[-\pi,\pi]$, if both $x+t \in \mathcal{G}$ and $x-t\in \mathcal{G}$ then $K(t)=0$. Note that $-\mathcal{G} :=\{-y|y \in \mathcal{G}\}$ is, like $\mathcal{G}$ itself, an eventually full open subset of $\mathcal{G}$.  The set $(-x+_\pi \mathcal{G}) \cap (x+_\pi (-\mathcal{G}))$ is eventually full, according to Lemmas \ref{L:help1} and \ref{L:help2}, and, for  all $t$ in $(-x+_\pi \mathcal{G}) \cap (x+_\pi(- \mathcal{G}))$, $K(t)=0$. In addition, as $x\in \mathcal{G}$, $F(x)=0$  and the sequence $n \mapsto a_n\sin nx + b_n \cos nx$ converges and is bounded. Using the first part of the proof, we conclude: $b_0 = 0$ and, for each $n>0$,  $ a_n\sin nx + b_n \cos nx=0$. This conclusion holds for all $x$ in  $\mathcal{G}$. As $\mathcal{G}$ is eventually full, $\mathcal{G}$ is inhabited and there exist $c,d$ such that $c<d$ and, for all $n>0$, for all $x$ in $(c,d)$,  $ a_n\sin nx + b_n \cos nx=0$. We conclude:  for all $n>0$, $a_n = b_n = 0$.\end{proof}
    
    Cantor proved, in \cite{cantor72}, (the classical equivalent of the statement) that every \textit{swiftly} full open subset of $(-\pi,\pi)$ guarantees uniqueness.
    
    Note that we gave an \textit{ordinal-free} treatment of the statement that every \textit{eventually} full open subset of $(-\pi,\pi)$ guarantees uniqueness. The classical version of this result,  clearly present in Cantor's mind, appears in print in a paper by H.~Lebesgue, see \cite{lebesgue03}. As we just saw, one does not need ordinals in order to obtain this result. The set $Ext_\mathcal{G}$ had to be introduced by a (generalized) inductive definition. Using countable ordinals does not make it possible to avoid such definitions, as the class of all countable ordinals itself has to be introduced by one.

    \section{Some examples}
 In this Section, we want to show  that there exists a rich variety of open subsets of $(-\pi,\pi)$ that are eventually full. \subsection{Cantor-Bendixson sets}  For all $a,b$ in $\mathcal{R}$ such that $a<b$, we define  a collection  $En_{(a,b)}$ of functions from  $\mathbb{N}$ to $[a,b]$. 
    
    \begin{enumerate}[\upshape (i)]\item For all $a,b$ in $\mathcal{R}$ such that $a<b$, the function $f$ satisfying: $f(0)=a$ and, for all $n$, $f(n+1) = b$, belongs to $En_{(a,b)}$, and,
     
    \item for all $a,b,c$ in $\mathcal{R}$ such that $a<c<b$, for all infinite sequences of reals $a=a_0, a_1, a_2, \ldots$ and $b=b_0,b_1,b_2,\ldots$ such that $\forall n[a_n < a_{n+1}<c<b_{n+1}<b_n]$ and $\forall m \exists n[b_n-a_n<\frac{1}{2^m}]$,  for every infinite sequence $f_0, f_1, f_2, \ldots$ such that, for all $n$, $f_{2n} \in En_{(a_n, a_{n+1})}$ and $f_{2n+1} \in En_{(b_{n+1}, b_{n})}$, the function $f$ satisfying $f(0) = a$, $f(1) = b$ and $f(2) =c$, and, for all $n$, for all $m$, $f\bigl(2^n(2m+1)+2\bigr) = f_n(m)$, belongs to $En_{(a,b)}$.    
     \end{enumerate}
   
   The elements of $En_{(a,b)}$ are called the \textit{Cantor-Bendixson enumerations associated with the open interval $(a,b)$}.

   For every $a,b$ in $\mathcal{R}$ such that $a<b$, for every $f$ in $En_{(a,b)}$, we let $\mathcal{G}_{(a,b)}^f$ be the set of all $x$ in $[a,b]$ such that, for all $n$, $f(n) \;\#_\mathcal{R}\; x$. 
   
   Note that, in general, if $f$ is a function from $\mathbb{N}$ to $(a,b)$, then the set of all $x$ in $(a,b)$ such that, for all $n$, $f(n) \;\#_\mathcal{R}\;x$, is not an open subset of $(a,b)$ but a countable intersection of open subsets of $(a,b)$.

    \begin{theorem} Let $a,b$ in $\mathcal{R}$ be given such that $a<b$.
    
    For every $f$ in $En_{(a,b)}$, the set $\mathcal{G}_{(a,b)}^f$ is an open subset of $(a,b)$.   \end{theorem}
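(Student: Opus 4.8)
The plan is to prove, by the generalized induction that comes with the (simultaneous) inductive definition of the collections $En_{(a,b)}$, the statement that for all $a,b$ in $\mathcal{R}$ with $a<b$ and every $f$ in $En_{(a,b)}$ the set $\mathcal{G}_{(a,b)}^f$ is an \emph{open subset of $(a,b)$}. The first thing I would record is the elementary observation that every Cantor-Bendixson enumeration $g$ in $En_{(p,q)}$ has $g(0)=p$ and $g(1)=q$; this holds by inspection of both clauses of the definition. Since $f(0)=a$ and $f(1)=b$, it follows at once that every $x$ in $\mathcal{G}_{(a,b)}^f$ satisfies $x\;\#_\mathcal{R}\;a$ and $x\;\#_\mathcal{R}\;b$, hence $a<_\mathcal{R}x$ and $x<_\mathcal{R}b$; so the inclusion $\mathcal{G}_{(a,b)}^f\subseteq(a,b)$ is free, and only openness has to be handled by the induction.

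For the base case, where $f(0)=a$ and $f(n+1)=b$ for all $n$, the range of $f$ is $\{a,b\}$ and hence $\mathcal{G}_{(a,b)}^f=\{x\in[a,b]\mid x\;\#_\mathcal{R}\;a\;\wedge\;x\;\#_\mathcal{R}\;b\}=(a,b)$, which is open.

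The induction step is the core of the proof. Suppose $f$ is built as in clause (ii) from $a<c<b$, sequences $a=a_0,a_1,\ldots$ and $b=b_0,b_1,\ldots$ with $a_n<a_{n+1}<c<b_{n+1}<b_n$ and $\lim_n(b_n-a_n)=0$, and enumerations $f_0,f_1,\ldots$ with $f_{2n}\in En_{(a_n,a_{n+1})}$ and $f_{2n+1}\in En_{(b_{n+1},b_n)}$; as induction hypothesis, every $\mathcal{G}_{(a_n,a_{n+1})}^{f_{2n}}$ and every $\mathcal{G}_{(b_{n+1},b_n)}^{f_{2n+1}}$ is open. I would then establish the identity $$\mathcal{G}_{(a,b)}^f=\bigcup_{n\in\mathbb{N}}\Bigl(\mathcal{G}_{(a_n,a_{n+1})}^{f_{2n}}\cup\mathcal{G}_{(b_{n+1},b_n)}^{f_{2n+1}}\Bigr)$$ and conclude, since a union of open subsets of $(a,b)$ is again an open subset of $(a,b)$. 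The inclusion $\supseteq$ is the easier half: if $x$ lies in, say, $\mathcal{G}_{(a_n,a_{n+1})}^{f_{2n}}$, then $a\le a_n<x<a_{n+1}<c<b$ (using $f_{2n}(0)=a_n$ and $f_{2n}(1)=a_{n+1}$), while every value $f_j(m)$ with $j\neq 2n$ lies in some $[a_k,a_{k+1}]$ with $k\neq n$ or in some $[b_{k+1},b_k]$, each of which is, in an explicit way, on one side apart from $x$; hence $x$ is apart from every value of $f$, that is, $x\in\mathcal{G}_{(a,b)}^f$.

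The inclusion $\subseteq$ contains the one step I expect to need real care, a constructive localisation. Given $x$ in $\mathcal{G}_{(a,b)}^f$, the value $c=f(2)$ forces $x\;\#_\mathcal{R}\;c$, so $x<_\mathcal{R}c$ or $c<_\mathcal{R}x$; I treat the former, the latter being symmetric. The point to exploit is that every subdivision point $a_n$ is itself a value of $f$, since $a_n=f_{2n}(0)=f(2^{2n}+2)$, and hence $x\;\#_\mathcal{R}\;a_n$ for every $n$. From $x<_\mathcal{R}c$ together with $\lim_n a_n=c$ one finds $N$ with $x<_\mathcal{R}a_N$; then, starting from $a_0=a<_\mathcal{R}x$ and deciding at each stage whether $a_j<_\mathcal{R}x$ or $x<_\mathcal{R}a_j$ (legitimate because $x\;\#_\mathcal{R}\;a_j$), a finite search yields $n<N$ with $a_n<_\mathcal{R}x<_\mathcal{R}a_{n+1}$. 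Finally, every $f_{2n}(m)$ is again a value of $f$, so $x$ is apart from all of them and $x\in\mathcal{G}_{(a_n,a_{n+1})}^{f_{2n}}$. The subtlety worth emphasising is exactly this last manoeuvre: the finite search works only because the points $a_n$ (and $b_n$) were deliberately placed among the values of $f$, so that membership in $\mathcal{G}_{(a,b)}^f$ hands us the apartness needed to decide where $x$ lies. The rest is routine bookkeeping.
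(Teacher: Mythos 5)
Your argument follows the same route as the paper: split on whether $f$ was built by clause (i) or clause (ii) of the inductive definition of $En_{(a,b)}$, and in the inductive case use the identity $\mathcal{G}_{(a,b)}^f = \bigcup_{n}\bigl(\mathcal{G}_{(a_n,a_{n+1})}^{f_{2n}}\cup\mathcal{G}_{(b_{n+1},b_n)}^{f_{2n+1}}\bigr)$ together with the induction hypothesis. The paper merely asserts this identity and leaves the verification to the reader; you supply the details, and the one genuinely non-trivial point you flag — that the subdivision points $a_n$, $b_n$ (and $c=f(2)$) all occur among the values of $f$, so membership in $\mathcal{G}_{(a,b)}^f$ hands over exactly the apartnesses needed to locate $x$ in one of the subintervals by a finite search — is indeed the constructive heart of the inclusion $\subseteq$. (Incidentally, you correctly write $c=f(2)$; the paper's line ``Define $c:=f(0)$'' is a slip, since the definition of $En_{(a,b)}$ sets $f(2)=c$.)
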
 
 \begin{proof}  We use induction.  Assume: $f\in En_{(a,b)}$. Note: either $f(2)=b$ or $f(2)<b$. 
 
    If   $f(2) = b$, then  $\mathcal{G}_{(a,b)}^f = (a,b)$.
    
    Assume $f(2)<b$.  Define $c:=f(0)$. Define, for each $n$ a function $f_n$ from $\mathbb{N}$ to $[a,b]$ such that, for all $m$, $f_n(m) :=f\bigl(2^n(2m+1)+2\bigr)$. Define, for each $n$, $a_n := f_{2n}(0)$ and $b_n:=f_{2n+1}(1)$.  Note that, for all $n$, $f_{2n} \in En_{(a_n, a_{n+1})}$ and $f_{2n+1} \in En_{(b_{n+1}, b_{n})}$, and:
   $\mathcal{G}_{(a,b)}^f= \bigcup\limits_{n \in \mathbb{N}} \mathcal{G}_{(a_n,a_{n+1})}^{f_{2n}}\cup  \mathcal{G}_{(b_{n+1},b_n)}^{f_{2n+1}}$. Assuming that, for each $n$, $\mathcal{G}_{(a_n,a_{n+1})}^{f_{2n}}$ and $\mathcal{G}_{(b_{n+1},b_n)}^{f_{2n+1}}$ are open subsets of $(a_n, a_{n+1})$ and $(b_{n+1}, b_n)$, respectively, we conclude that $\mathcal{G}_{(a,b)}^f$ is an open subset of $(a,b)$. 
   \end{proof}

   \begin{lemma}\label{L:ext} Let $a,b,c,d$ in $\mathcal{R}$ be given such that $a<c<d<b$. Let $\mathcal{G}$ be an open subset of $(a,b)$. Then $Ext_{\mathcal{G}\cap(c,d)} = \{\mathcal{X} \cap (c,d)|\mathcal{X} \in Ext_\mathcal{G}\}$. \end{lemma}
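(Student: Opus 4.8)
The plan is to establish the two inclusions separately, in each case by the principle of transfinite induction on a suitable $Ext$-collection, the real work being done by Lemma~\ref{L:prepext}. First one notes that $\mathcal{G}\cap(c,d)$ is an open subset of $(c,d)$, so that $Ext_{\mathcal{G}\cap(c,d)}$ makes sense. The pivotal observation is then the identity $(\mathcal{X}\cap(c,d))^+ = \mathcal{X}^+\cap(c,d)$, valid for every open $\mathcal{X}$. Indeed, $\subseteq$ holds because $(\mathcal{X}\cap(c,d))^+\subseteq\mathcal{X}^+$ by monotonicity of the operation $\mathcal{H}\mapsto\mathcal{H}^+$ (immediate from its definition), while $(\mathcal{X}\cap(c,d))^+\subseteq(c,d)$ since $(\mathcal{X}\cap(c,d))^+$ belongs to $Ext_{\mathcal{X}\cap(c,d)}$ and $\mathcal{X}\cap(c,d)\subseteq(c,d)$, so the earlier remark that co-derivative extensions of a subset of an interval stay inside that interval applies; and $\supseteq$ is exactly Lemma~\ref{L:prepext} with $\mathcal{G}_0:=\mathcal{X}$ and $\mathcal{G}_1:=(c,d)$.

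For the inclusion $Ext_{\mathcal{G}\cap(c,d)}\subseteq\{\mathcal{X}\cap(c,d)\mid\mathcal{X}\in Ext_\mathcal{G}\}$, I would show that the collection $\mathcal{E}':=\{\mathcal{X}\cap(c,d)\mid\mathcal{X}\in Ext_\mathcal{G}\}$ satisfies the three closure conditions defining $Ext_{\mathcal{G}\cap(c,d)}$, so that the induction principle yields $Ext_{\mathcal{G}\cap(c,d)}\subseteq\mathcal{E}'$. Condition (i) holds since $\mathcal{G}\in Ext_\mathcal{G}$. For (ii), if $\mathcal{H}=\mathcal{X}\cap(c,d)$ with $\mathcal{X}\in Ext_\mathcal{G}$ (so $\mathcal{X}$ is open), then by the identity $\mathcal{H}^+=\mathcal{X}^+\cap(c,d)$, and $\mathcal{X}^+\in Ext_\mathcal{G}$, hence $\mathcal{H}^+\in\mathcal{E}'$. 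Condition (iii) follows from $\bigcup_n(\mathcal{X}_n\cap(c,d))=(\bigcup_n\mathcal{X}_n)\cap(c,d)$ and closure of $Ext_\mathcal{G}$ under countable unions.

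For the reverse inclusion I would apply transfinite induction on $Ext_\mathcal{G}$ to the collection $\mathcal{E}'':=\{\mathcal{X}\mid\mathcal{X}\text{ is open and }\mathcal{X}\cap(c,d)\in Ext_{\mathcal{G}\cap(c,d)}\}$; the openness clause guarantees that Lemma~\ref{L:prepext} is applicable at the inductive step. Here (i) holds because $\mathcal{G}\cap(c,d)$ is the base of $Ext_{\mathcal{G}\cap(c,d)}$; (ii) holds because, for open $\mathcal{X}\in\mathcal{E}''$, the set $\mathcal{X}^+$ is again open and $\mathcal{X}^+\cap(c,d)=(\mathcal{X}\cap(c,d))^+\in Ext_{\mathcal{G}\cap(c,d)}$; and (iii) is again distribution of intersection over countable unions. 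Thus $Ext_\mathcal{G}\subseteq\mathcal{E}''$, which gives the desired inclusion. The only point requiring care is the identity $(\mathcal{X}\cap(c,d))^+=\mathcal{X}^+\cap(c,d)$, and once one notices that its substantive half is precisely Lemma~\ref{L:prepext}, no genuine obstacle remains: both halves of the proof are then routine transfinite inductions.
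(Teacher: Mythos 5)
Your proposal is correct and follows essentially the same route as the paper: the heart of the matter is the identity $(\mathcal{X}\cap(c,d))^+ = \mathcal{X}^+\cap(c,d)$, with Lemma~\ref{L:prepext} supplying the nontrivial inclusion, and then transfinite induction carries it through successor and limit stages. The only cosmetic difference is that you run two separate inductions (one per inclusion), whereas the paper phrases it as a single induction maintaining the correspondence $\mathcal{X}=\mathcal{Y}\cap(c,d)$ in parallel; your version also spells out explicitly why $(\mathcal{X}\cap(c,d))^+\subseteq(c,d)$ and where the openness hypothesis of Lemma~\ref{L:prepext} is used, which the paper leaves implicit.
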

   \begin{proof} We use induction. Clearly, $(c,d) = (a,b)\cap(c,d)$.
   Note, using Lemma \ref{L:prepext}, for all $\mathcal{X}$ in $Ext_{\mathcal{G}\cap(c,d)}$, for all $\mathcal{Y}$ in $Ext_\mathcal{G}$, if $\mathcal{X} = \mathcal{Y} \cap(c,d)$, then $\mathcal{X}^+ = \mathcal{Y}^+ \cap(c,d)$. Finally, if $\mathcal{X}_0, \mathcal{X}_1, \ldots$ and $\mathcal{Y}_0, \mathcal{Y}_1, \ldots$ are infinite sequences of elements of $Ext_{\mathcal{G}\cap(c,d)}$ and $Ext_\mathcal{G}$, respectively, such that, for each $n$, $\mathcal{X}_n = \mathcal{Y}_n \cap(c,d)$, then $\bigcup\limits_{n \in \mathbb{N}} \mathcal{X}_n =(\bigcup\limits_{n \in \mathbb{N}} \mathcal{Y}_n)\cap(c,d)$. \end{proof}
   \begin{theorem}\label{T:cbevfull} Let $a,b$ in $\mathcal{R}$ be given such that $a<b$.
    
     For every $f$ in $En_{(a,b)}$, the set $\mathcal{G}_{(a,b)}^f$ is eventually full in $(a,b)$.  \end{theorem}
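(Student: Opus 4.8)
The plan is to argue by induction on the generation of $En_{(a,b)}$, proving the assertion simultaneously for all $a<b$ and all $f\in En_{(a,b)}$. If $f$ arises from clause (i) of the definition of $En_{(a,b)}$, then every value of $f$ equals $a$ or $b$, so $\mathcal{G}_{(a,b)}^f=\{x\in[a,b]\mid x\;\#_\mathcal{R}\;a\;\wedge\;x\;\#_\mathcal{R}\;b\}=(a,b)$, which belongs to $Ext_{\mathcal{G}_{(a,b)}^f}^{<\omega}$ outright; hence $\mathcal{G}_{(a,b)}^f$ is (swiftly, hence) eventually full in $(a,b)$.

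Now let $f$ arise from clause (ii), built from $c$ with $a<c<b$, from sequences $a=a_0<a_1<\cdots$ and $b=b_0>b_1>\cdots$ with $a_n,b_n\to c$, and from functions $f_{2n}\in En_{(a_n,a_{n+1})}$ and $f_{2n+1}\in En_{(b_{n+1},b_n)}$. Put $\mathcal{U}_n:=\mathcal{G}_{(a_n,a_{n+1})}^{f_{2n}}$ and $\mathcal{V}_n:=\mathcal{G}_{(b_{n+1},b_n)}^{f_{2n+1}}$. As established in the proof, above, of the theorem that $\mathcal{G}_{(a,b)}^f$ is an open subset of $(a,b)$, one has $\mathcal{G}_{(a,b)}^f=\bigcup_{n}(\mathcal{U}_n\cup\mathcal{V}_n)$; moreover $\mathcal{U}_n\subseteq(a_n,a_{n+1})$ and $\mathcal{V}_n\subseteq(b_{n+1},b_n)$, since $f_{2n}(0)=a_n$, $f_{2n}(1)=a_{n+1}$, and similarly for $f_{2n+1}$. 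By the induction hypothesis, $(a_n,a_{n+1})\in Ext_{\mathcal{U}_n}$ and $(b_{n+1},b_n)\in Ext_{\mathcal{V}_n}$ for each $n$. I next use a monotonicity property of $Ext$: if $\mathcal{A}\subseteq\mathcal{B}$ are open subsets of $\mathcal{R}$, then for every $\mathcal{X}\in Ext_\mathcal{A}$ there is $\mathcal{Y}\in Ext_\mathcal{B}$ with $\mathcal{X}\subseteq\mathcal{Y}$; this follows by transfinite induction on $Ext_\mathcal{A}$, using that $\mathcal{P}\subseteq\mathcal{Q}$ implies $\mathcal{P}^+\subseteq\mathcal{Q}^+$ (immediate from the definition of the co-derivative extension), and one can let the induction produce $\mathcal{Y}$ explicitly from the generation of $\mathcal{X}$, so that no Countable Choice is needed. (Alternatively, for $n\ge 1$ one may extract $\mathcal{X}_n$ from Lemma~\ref{L:ext}, accommodating the endpoints $a,b$ by an extra countable union.) Applying this with $\mathcal{A}=\mathcal{U}_n$ and $\mathcal{B}=\mathcal{G}_{(a,b)}^f$, I obtain $\mathcal{X}_n\in Ext_{\mathcal{G}_{(a,b)}^f}$ with $(a_n,a_{n+1})\subseteq\mathcal{X}_n$, and likewise $\mathcal{Y}_n\in Ext_{\mathcal{G}_{(a,b)}^f}$ with $(b_{n+1},b_n)\subseteq\mathcal{Y}_n$. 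Since $Ext_{\mathcal{G}_{(a,b)}^f}$ is closed under countable unions, $\mathcal{W}:=\bigcup_n\mathcal{X}_n\cup\bigcup_n\mathcal{Y}_n\in Ext_{\mathcal{G}_{(a,b)}^f}$, and $\mathcal{W}\supseteq\mathcal{W}_0$, where $\mathcal{W}_0:=\bigcup_n(a_n,a_{n+1})\cup\bigcup_n(b_{n+1},b_n)$.

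It then suffices to show that $\mathcal{W}_0$ is swiftly full in $(a,b)$, i.e., $(\mathcal{W}_0^+)^+=(a,b)$: for then, by monotonicity of the co-derivative, $(a,b)=(\mathcal{W}_0^+)^+\subseteq(\mathcal{W}^+)^+$, while every member of $Ext_{\mathcal{G}_{(a,b)}^f}$ is contained in $(a,b)$, so $(\mathcal{W}^+)^+=(a,b)\in Ext_{\mathcal{G}_{(a,b)}^f}$, which is the claim. To see that $\mathcal{W}_0$ is swiftly full, one first checks that $\bigcup_n(a_n,a_{n+1})=\{x\in(a,c)\mid\forall n\,[x\;\#_\mathcal{R}\;a_n]\}$ --- for the inclusion $\supseteq$ one takes $N$ with $x<a_N$ and uses co-transitivity of $<_\mathcal{R}$ to place $x$ in one of the finitely many gaps $(a_i,a_{i+1})$, $i<N$ --- and likewise on the right of $c$; thus $\mathcal{W}_0$ is $(a,b)$ with the points $c,a_1,b_1,a_2,b_2,\dots$ removed. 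Because the $a_n$ accumulate only at $c$ (and similarly the $b_n$), every point of $(a,c)\cup(c,b)$ possesses a punctured neighbourhood contained in $\mathcal{W}_0$ --- here one bounds below the finitely many relevant gaps and uses a decidable comparison to decide which single point, if any, to exempt --- so $\mathcal{W}_0^+\supseteq(a,c)\cup(c,b)=(a,b)\setminus\{c\}$; finally, a punctured neighbourhood of $c$ lies in $(a,b)\setminus\{c\}\subseteq\mathcal{W}_0^+$, whence $(\mathcal{W}_0^+)^+=(a,b)$.

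I expect the main obstacle to be precisely this last, genuinely constructive, Cantor--Bendixson bookkeeping: one must exploit the local discreteness of $\{a_n\}$ and $\{b_n\}$ away from $c$ carefully enough to produce, for a given $x\in(a,c)\cup(c,b)$, concrete $n$ and $y$ witnessing $x\in\mathcal{W}_0^+$, the delicate point being that one cannot decide whether $x$ coincides with some $a_n$ or $b_n$ and must instead lean on co-transitivity of $<_\mathcal{R}$ together with the single-point exemption built into the definition of $\mathcal{G}^+$. The structural induction, the decomposition of $\mathcal{G}_{(a,b)}^f$, and the monotonicity of $Ext$ are all routine.
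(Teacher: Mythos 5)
Your proof is correct and follows the paper's route closely: structural induction over $En_{(a,b)}$, the decomposition $\mathcal{G}_{(a,b)}^f=\bigcup_n(\mathcal{U}_n\cup\mathcal{V}_n)$, lifting the extensions obtained from the induction hypothesis into $Ext_{\mathcal{G}_{(a,b)}^f}$, and finishing with two applications of the co-derivative across $c$. The one deviation worth noting is in the lifting step: the paper invokes Lemma~\ref{L:ext}, which as stated requires $a<c<d<b$ and so does not literally cover the endpoint pieces $(a_0,a_1)=(a,a_1)$ and $(b_1,b_0)=(b_1,b)$; you use instead a general monotonicity of $Ext$ with respect to $\subseteq$ (proved by a routine transfinite induction), which handles all $n$ uniformly and avoids the endpoint issue you rightly flag. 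You also make explicit what the paper compresses into the single assertion $\{x\in(a,b)\mid x\;\#_\mathcal{R}\;c\}\subseteq\mathcal{Y}^+$ and $\mathcal{Y}^{++}=(a,b)$ --- namely the identity $\bigcup_n(a_n,a_{n+1})=\{x\in(a,c)\mid\forall n\,[x\;\#_\mathcal{R}\;a_n]\}$ and the use of co-transitivity together with the single-exempted-point clause in the definition of $\mathcal{G}^+$. The only small looseness is that what you call a ``decidable comparison'' is not a literal decision about coincidence of reals but exactly the co-transitive dichotomy (for given rationals $p<q$, either $|x-a_i|>p$ or $|x-a_i|<q$); the argument underneath is sound, and this is indeed the step where the constructive content lives.
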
 
     
     \begin{proof} We use induction.  Assume: $f\in En_{(a,b)}$. Note: either $f(2)=b$ or $f(2)<b$. 
 
    If   $f(2) = b$, then  $\mathcal{G}_{(a,b)}^f = (a,b)$.
    
    Assume $f(2)<b$.  Define $c:=f(0)$. Define, for each $n$ a function $f_n$ from $\mathbb{N}$ to $[a,b]$ such that, for all $m$, $f_n(m) :=f\bigl(2^n(2m+1)+2\bigr)$. Define, for each $n$, $a_n := f_{2n}(0)$ and $b_n:=f_{2n+1}(1)$.  Note that, for all $n$, $f_{2n} \in En_{(a_n, a_{n+1})}$ and $f_{2n+1} \in En_{(b_{n+1}, b_{n})}$, and:
   $\mathcal{G}_{(a,b)}^f= \bigcup\limits_{n \in \mathbb{N}} \mathcal{G}_{(a_n,a_{n+1})}^{f_{2n}}\cup  \mathcal{G}_{(b_{n+1},b_n)}^{f_{2n+1}}$. Assuming that, for each $n$, $\mathcal{G}_{(a_n,a_{n+1})}^{f_{2n}}$ is eventually full in $(a_n, a_{n+1})$ and $\mathcal{G}_{(b_{n+1},b_n)}^{f_{2n+1}}$ is eventually full in $(b_{n+1}, b_n)$, respectively, we determine, using Lemma \ref{L:ext}, an infinite sequence $\mathcal{Y}_0, \mathcal{Y}_1, \ldots$ of elements of $Ext_\mathcal{G}$ such that, for each $n$, $\mathcal{Y}_{2n} \cap (a_n, a_{n+1}) = (a_n, a_{n+1})$ and $\mathcal{Y}_{2n+1} \cap (b_{n+1}, b_{n}) = (b_{n+1}, b_{n})$. Define $\mathcal{Y} = \bigcup\limits \mathcal{Y}_n$ and note: $\bigcup\limits_{n \in \mathbb{N}} (a_n, a_{n+1})\cup(b_{n+1}, b_n) \subseteq \mathcal{Y}$. Conclude: $\{x \in (a,b)|x\;\#_\mathcal{R} \;c\} \subseteq \mathcal{Y}^+$ and: $\mathcal{Y}^{++} = (a,b)$. Conclude: $\mathcal{G}$ is eventually full in $(a,b)$. 
 \end{proof}  
    We let $\mathcal{CBO}_{(a,b)}$ be the collection of all sets $\mathcal{G}_{(a,b)}^f$, where $f \in En_{(a,b)}$. 
    
    These sets are called the \textit{Cantor-Bendixson-open subsets of $(a,b)$}. 
    
   \begin{corollary} Every Cantor-Bendixson-open subset of $(-\pi,\pi)$ guarantees uniqueness. \end{corollary}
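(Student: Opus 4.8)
The plan is to simply chain together the two theorems that immediately precede this corollary. A Cantor--Bendixson-open subset of $(-\pi,\pi)$ is by definition a set of the form $\mathcal{G}_{(-\pi,\pi)}^f$ for some $f$ in $En_{(-\pi,\pi)}$, so the whole content reduces to recognizing that such a set falls under the hypothesis of Theorem~\ref{T:evfullunique}.

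First I would unfold the definition: let $\mathcal{G}$ be a Cantor--Bendixson-open subset of $(-\pi,\pi)$, so $\mathcal{G} = \mathcal{G}_{(-\pi,\pi)}^f$ for some $f \in En_{(-\pi,\pi)}$. Next I would invoke Theorem~\ref{T:cbevfull} with $(a,b) = (-\pi,\pi)$ to conclude that $\mathcal{G}$ is eventually full in $(-\pi,\pi)$, i.e.\ $(-\pi,\pi) \in Ext_\mathcal{G}$. Finally, since $\mathcal{G}$ is in particular an open subset of $(-\pi,\pi)$ (by the theorem preceding Lemma~\ref{L:ext}), Theorem~\ref{T:evfullunique} applies verbatim and yields that $\mathcal{G}$ guarantees uniqueness.

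There is no real obstacle here: the corollary is a formal consequence of Theorems~\ref{T:cbevfull} and~\ref{T:evfullunique}, and the only thing one must check is the trivial observation that a Cantor--Bendixson-open set is indeed open, which was already recorded. Accordingly, the proof is a single sentence:

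\begin{proof} Let $\mathcal{G}$ be a Cantor--Bendixson-open subset of $(-\pi,\pi)$, say $\mathcal{G} = \mathcal{G}_{(-\pi,\pi)}^f$ with $f \in En_{(-\pi,\pi)}$. By Theorem~\ref{T:cbevfull}, $\mathcal{G}$ is eventually full in $(-\pi,\pi)$, and hence, by Theorem~\ref{T:evfullunique}, $\mathcal{G}$ guarantees uniqueness. \end{proof}
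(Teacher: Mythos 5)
Your proof is correct and is exactly the argument the paper gives: the paper's proof of this corollary reads simply ``Use Theorems~\ref{T:evfullunique} and~\ref{T:cbevfull}.'' Your slightly more explicit unfolding of the definition and the intermediate ``eventually full'' step is a faithful expansion of the same chain of reasoning.
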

   
   \begin{proof} Use Theorems \ref{T:evfullunique} and \ref{T:cbevfull}. \end{proof}
    Let $a,b$ in $\mathcal{R}$ be given such that $a<b$.  For every $f$ in $En_{(a,b)}$, we define:
    
     $$\mathcal{F}_{(a,b)}^f:=\{x\in [a,b]|x \notin \mathcal{G}_{(a,b)}^f\}.$$  A set of this form is called a \textit{Cantor-Bendixson-closed subset of $(a,b)$}.
     
   \subsubsection{}\label{SSS:located}  A subset $\mathcal{X}$ of $\mathcal{R}$ is called a \textit{located} subset of $\mathcal{R}$  if and only if, for each $x$ in $\mathcal{R}$, one may find $s$ in $\mathcal{R}$ such that (i) for all $y$ in $\mathcal{X}$, $|y-x| \ge s$, and (ii) for every $\varepsilon >0$  there exists $y$ in $\mathcal{X}$ such that $|y-x| < s+\varepsilon$. This number $s$, if it exists,  is the \textit{infimum} or \textit{greatest lower bound} of the set $\{|y-x||y\in [a,b]\}$, and is called the \textit{distance from $x$ to $\mathcal{X}$} notation: $d(x, \mathcal{X})$. 
     
     \begin{theorem} For all $a,b$ in $\mathcal{R}$ such that $a<b$, every  Cantor-Bendixson-closed subset of $(a,b)$  is a located subset of $\mathcal{R}$. \end{theorem}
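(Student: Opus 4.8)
The plan is to prove, by induction following the inductive definition of $En_{(a,b)}$, that for every $f$ in $En_{(a,b)}$ the set $\mathcal{F}_{(a,b)}^f$ is located, i.e.\ that for each $z$ in $\mathcal{R}$ the distance $d(z,\mathcal{F}_{(a,b)}^f)$ exists; since every Cantor-Bendixson-closed subset of $(a,b)$ has the form $\mathcal{F}_{(a,b)}^f$, this gives the Theorem. \emph{Base case:} here $f(0)=a$, $f(n+1)=b$, so $\mathcal{G}_{(a,b)}^f=(a,b)$ and $\mathcal{F}_{(a,b)}^f=\{x\in[a,b]|\neg(a<x<b)\}$, and I would show $d(z,\mathcal{F}_{(a,b)}^f)=\inf(|z-a|,|z-b|)$, a real number by the remarks in Section \ref{S:riemann1}: the lower bound holds because if $y\in\mathcal{F}_{(a,b)}^f$ and $|y-z|<\inf(|z-a|,|z-b|)$ then $y\;\#_\mathcal{R}\;a$ and $y\;\#_\mathcal{R}\;b$, so $a<y<b$, contradicting $y\in\mathcal{F}_{(a,b)}^f$; and the approximation property follows since $a,b\in\mathcal{F}_{(a,b)}^f$ and, for every $\varepsilon>0$, co-transitivity of $<_\mathcal{R}$ together with the defining property of $\inf$ gives $|z-a|<\inf(|z-a|,|z-b|)+\varepsilon$ or $|z-b|<\inf(|z-a|,|z-b|)+\varepsilon$.

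\emph{Inductive step:} let $f$ have $f(2)=c<b$, built from sequences $a=a_0<a_1<\cdots\to c$ and $b=b_0>b_1>\cdots\to c$ (with $a_n<a_{n+1}<c<b_{n+1}<b_n$ and $\forall m\exists n[b_n-a_n<\frac1{2^m}]$) and sub-enumerations $f_{2n}\in En_{(a_n,a_{n+1})}$, $f_{2n+1}\in En_{(b_{n+1},b_n)}$; write $\mathcal{F}_n^L:=\mathcal{F}_{(a_n,a_{n+1})}^{f_{2n}}$ and $\mathcal{F}_n^R:=\mathcal{F}_{(b_{n+1},b_n)}^{f_{2n+1}}$, with distance functions $d_n^L,d_n^R$ given by the induction hypothesis. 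I would record three facts: (1) $c\in\mathcal{F}_{(a,b)}^f$, since $c=f(2)$ is enumerated, whence $\neg(c\in\mathcal{G}_{(a,b)}^f)$, and $c\in[a,b]$; (2) $\mathcal{F}_n^L,\mathcal{F}_n^R\subseteq\mathcal{F}_{(a,b)}^f$, because by the decomposition $\mathcal{G}_{(a,b)}^f=\bigcup_k\mathcal{G}_{(a_k,a_{k+1})}^{f_{2k}}\cup\mathcal{G}_{(b_{k+1},b_k)}^{f_{2k+1}}$ a point of $\mathcal{G}_{(a,b)}^f$ lying in $[a_n,a_{n+1}]$ must lie in $\mathcal{G}_{(a_n,a_{n+1})}^{f_{2n}}$ (the other summands being disjoint from $[a_n,a_{n+1}]$, by monotonicity of the $a$'s and $b$'s and decidability of $<$ on $\mathbb{N}$), so $y\in[a_n,a_{n+1}]\wedge\neg(y\in\mathcal{G}_{(a_n,a_{n+1})}^{f_{2n}})$ implies $y\in\mathcal{F}_{(a,b)}^f$; and (3) $\mathcal{F}_n^L\subseteq[a_n,c]$ and $\mathcal{F}_n^R\subseteq[c,b_n]$, hence $|d_n^L(z)-|z-c||\le c-a_n$ and $|d_n^R(z)-|z-c||\le b_n-c$, both bounds being at most $b_n-a_n$ and so tending to $0$ with a modulus read off from $\forall m\exists n[b_n-a_n<\frac1{2^m}]$. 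By (3), the infimum $s(z):=\inf\bigl(\{|z-c|\}\cup\{d_n^L(z)|n\in\mathbb{N}\}\cup\{d_n^R(z)|n\in\mathbb{N}\}\bigr)$ exists as a real: to compute it within $\frac1{2^m}$ it suffices to choose $N$ with $b_N-a_N<\frac1{2^m}$ and minimise over $|z-c|,d_0^L(z),\dots,d_N^L(z),d_0^R(z),\dots,d_N^R(z)$, since all later $d_n^L(z),d_n^R(z)$ are then within $\frac1{2^m}$ of $|z-c|$.

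It remains to show $s(z)=d(z,\mathcal{F}_{(a,b)}^f)$. The approximation property is easy: for $\varepsilon>0$, a near-minimising element of the defining set of $s(z)$ is $|z-c|$ — then take $y:=c$ — or some $d_n^L(z)$ or $d_n^R(z)$ — then apply the induction hypothesis to $\mathcal{F}_n^L$ or $\mathcal{F}_n^R$, which are contained in $\mathcal{F}_{(a,b)}^f$ by (2) — and in each case $|y-z|<s(z)+\varepsilon$. The lower bound is the heart of the matter, and where I expect the main obstacle: given $y\in\mathcal{F}_{(a,b)}^f$, assume $|y-z|<s(z)$ and derive a contradiction. Since $s(z)\le|z-c|$ we get $y\;\#_\mathcal{R}\;c$; by symmetry treat the case $y<c$. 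For each $n$, $\mathcal{G}_{(a_n,a_{n+1})}^{f_{2n}}\subseteq\mathcal{G}_{(a,b)}^f$ and $\neg(y\in\mathcal{G}_{(a,b)}^f)$ give $\neg(y\in\mathcal{G}_{(a_n,a_{n+1})}^{f_{2n}})$, while $|y-z|<s(z)\le d_n^L(z)$ gives $\neg(y\in\mathcal{F}_n^L)$, i.e.\ $\neg\bigl(y\in[a_n,a_{n+1}]\wedge\neg(y\in\mathcal{G}_{(a_n,a_{n+1})}^{f_{2n}})\bigr)$; together these yield $\neg(a_n\le y\le a_{n+1})$. An induction on $n$ now gives $a_n\le_\mathcal{R}y$ for all $n$: $a_0=a\le y$, and at each step $\neg(a_n\le y\le a_{n+1})$ with $a_n\le y$ yields $\neg(y\le a_{n+1})$, i.e.\ $\neg\neg(a_{n+1}<_\mathcal{R}y)$, hence $\neg(y<_\mathcal{R}a_{n+1})$, i.e.\ $a_{n+1}\le_\mathcal{R}y$. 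Since $a_n\to c$, having $a_n\le y$ for all $n$ forces $c\le_\mathcal{R}y$, contradicting $y<_\mathcal{R}c$. The case $c<y$ is symmetric, using $\mathcal{F}_n^R$, the intervals $[b_{n+1},b_n]$ and $b_n\to c$. The delicate points throughout are keeping the nested negations straight in this last step and the observation — resting on the shrinking modulus built into the definition of $En_{(a,b)}$ — that $s(z)$ genuinely exists.
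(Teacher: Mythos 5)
Your proof is correct, and it takes a genuinely different route from the paper's in the inductive step. The paper first computes $d(x,\mathcal{F}_{(a,b)}^f)$ only for points $x$ that are \emph{apart} from the centre $c=f(2)$, by a \emph{finite} infimum formula $\inf\bigl(x-b_n,\ \inf_{i\le n}d(x,\mathcal{F}_{(b_{i+1},b_i)}^{f_{2i+1}})\bigr)$ (and its mirror image for $x<c$), and then passes to an arbitrary $x$ by constructing an element $\alpha$ of Cantor space that effectively approximates the undecidable question whether $x$ lies near $c$, defining surrogate points $x_n$ apart from $c$ with $x_n\to x$, and invoking continuity of the distance function. You instead write down a single \emph{infinite} infimum $s(z)=\inf\bigl(\{|z-c|\}\cup\{d_n^L(z)\}_n\cup\{d_n^R(z)\}_n\bigr)$, valid uniformly in $z$, and show that it exists as a real by exploiting the modulus built into $En_{(a,b)}$: since $\mathcal{F}_n^L\subseteq[a_n,c]$ and $\mathcal{F}_n^R\subseteq[c,b_n]$, the tail terms lie within $b_n-a_n$ of $|z-c|$, which is itself one of the terms. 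This dispenses with the $\alpha$-interpolation at the cost of justifying an infinitary infimum, and both ways the shrinking condition $\forall m\exists n[b_n-a_n<\frac1{2^m}]$ does the essential work. Your lower-bound argument (the chain $\neg(a_n\le y\le a_{n+1})$ together with $a_n\le_\mathcal{R}y$ forcing $a_{n+1}\le_\mathcal{R}y$ via the double-negation $\neg\neg(a_{n+1}<_\mathcal{R}y)\Rightarrow\neg(y<_\mathcal{R}a_{n+1})$) is sound and is a nice constructive replacement for the classical ``the nearest point is either $c$ or in some $\mathcal{F}_n^L$''. Two small points worth tightening if you write this out fully: the step ``$|y-z|<s(z)\le|z-c|$ gives $y\;\#_\mathcal{R}\;c$'' needs the one-line observation $|y-c|\ge|z-c|-|z-y|>0$; and selecting a near-minimiser constructively from the finite set $\{|z-c|,d_0^L(z),\dots,d_N^R(z)\}$ in the approximation half requires a short co-transitivity induction, which you gesture at but should make explicit.
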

     \begin{proof} The proof is by induction.    Let $f$ in $En_{(a,b)}$ be given. Note: either $f(2)=b$ or $f(2) <b$. 
     
     If $f(2) =b$, then $\mathcal{F}_{(a,b)}= \{a,b\}$ is clearly located.
     
      Now assume $f(2)<b$. Define $c:=f(2)$.  Define, for each $n$, a function $f_n$ from $\mathbb{N}$ to $[a,b]$ such that, for all $m$, $f_n(m) := f\bigl(2^n(2m+1)\bigr)$. Define,  for each $n$, $a_n :=f_{2n}(0)$ and $b_n:= f_{2n+1}(1)$.   Note that, for all $n$, $f_{2n} \in En_{(a_{n}, a_{n+1})}$ and $f_{2n+1} \in En_{(b_{n+1}, b_{n})}$. 
      
      Assume: $x\in[a,b]$ and $x \;\#_\mathcal{R}\; c$.
       If $x>c$, find $n$ such that $b_n<x$ and note: $d(x, \mathcal{F}_{(a,b)}^f) = \inf(x-b_n, \inf_{i\le n}d(x, \mathcal{F}_{(b_{i+1},b_{i})}^{f_{2i+1}})$. 
       If $x<c$, find $n$ such that $x<a_n$ and note: $d(x, \mathcal{F}_{(a,b)}^f) = \inf(a_n-x, \inf_{i\le n}d(x, \mathcal{F}_{(a_{i},a_{i+1})}^{f_{2i}})$.
       
       Let $x$ in $[a,b]$ be given. Find $\alpha$ in $\mathcal{C}$ such that, for each $n$, if $\alpha(n) =0$, then $|x-c| < \frac{1}{2^n}$, and, if $\alpha(n) = 1$ then $|x-c|>\frac{1}{2^{n+1}}$. Define, for each $n$, if $\forall i\le n[\alpha(i) =0]$, then $x_n = c +\frac{1}{2^n}$, and, if $\alpha(n) =1$, then $x_n=x$. Note: $d(x, \mathcal{F}_{(a,b)}^f)=\lim_{n\rightarrow \infty}d(x_n, \mathcal{F}_{(a,b)}^f)$.
        \end{proof}
     
     \subsubsection{Ordering  $En_{(-\pi,\pi)}$}\label{SSS:ordercb} One may define relations $\prec_{CB}$ and $\preceq_{CB}$ on the collection of the Cantor-Bendixson-enumerations associated with  $(-\pi,\pi)$, as follows.
  
  \smallskip
  $f \prec_{CB} g$ if and only if there exists a co-derivative extension  $\mathcal{H}$ of the set  $(-\frac{1}{2}\pi+\frac{1}{2}\mathcal{G}_{(-\pi,\pi)}^f) \cup (\frac{1}{2}\pi+\frac{1}{2}\mathcal{G}_{(-\pi,\pi)}^g)$ such that $ (-\pi,0)\subseteq \mathcal{H}$ and \textit{not:} $(0,\pi)\subseteq \mathcal{H}$, and 
  
    \smallskip
   $f \preceq_{CB} g$ if and only if for all co-derivative extensions  $\mathcal{H}$ of the set  $(-\frac{1}{2}\pi+\frac{1}{2}\mathcal{G}_{(-\pi,\pi)}^f) \cup (\frac{1}{2}\pi+\frac{1}{2}\mathcal{G}_{(-\pi,\pi)}^g)$,  if $ (0,\pi)\subseteq \mathcal{H}$, then   $(-\pi,0)\subseteq \mathcal{H}$.
   
   \smallskip $f \prec_{CB} g$ means roughly: \textit{the Cantor-Bendixson-rank of $\mathcal{F}_{(-\pi,\pi)}^f$ is strictly lower than 
  the Cantor-Bendixson-rank of $\mathcal{F}_{(-\pi,\pi)}^g$},  and 
  
  $f \preceq_{CB} g$ means roughly: \textit{the Cantor-Bendixson-rank of $\mathcal{F}_{(-\pi,\pi)}^f$ is not higher than 
  the Cantor-Bendixson-rank of $\mathcal{F}_{(-\pi,\pi)}^g$}.
  
  \smallskip
  One may study these relations and prove important facts, like:
  
  \begin{quote} For every infinite sequence $\mathcal{G}_0$, $\mathcal{G}_1, \dots$ of elements of $\mathcal{CBO}_{(-\pi,\pi)}$ there exists $\mathcal{H}$ in 
  $\mathcal{CBO}_{(-\pi,\pi)}$ such that, for all $n$, $\mathcal{G}_n \prec_{CB} \mathcal{H}$.
  \end{quote}
   \subsection{Perhaps} For every function $f$  from $\mathbb{N}$ to $\mathcal{R}$, we define: $Ran(f) := \{f(n)|n \in \mathbb{N}\}.$ The set $Ran(f)$ is called \textit{the subset of $\mathcal{R}$ enumerated by $f$}.

     \smallskip For each subset $\mathcal{X}$ of $\mathcal{R}$ we define: $\overline{\mathcal{X}}:=\{x \in \mathcal{R}|\forall n \exists y \in \mathcal{X}[|y - x|<\frac{1}{2^n}]\}.$ $\overline{ \mathcal{X}}$ is called the \textit{closure} of the set $\mathcal{X}$.  
     \begin{theorem}\label{T:ccbfanlike}  For all $a,b$ in $\mathcal{R}$  such that $a<b$,  for every $f$ in $En_{(a,b)}$, $\mathcal{F}_{(a,b)}^f = \overline{Ran(f)}$. \end{theorem}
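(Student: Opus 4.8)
The plan is to split the identity into the two inclusions, only one of which really needs the inductive build-up of $En_{(a,b)}$.

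First I would show $\overline{Ran(f)}\subseteq\mathcal{F}_{(a,b)}^f$ without any induction, by observing that $\mathcal{F}_{(a,b)}^f$ is a \emph{closed} subset of $[a,b]$. Since $\mathcal{G}_{(a,b)}^f$ is open (this was proved just above), its relative complement $\{x\in[a,b]\mid x\notin\mathcal{G}_{(a,b)}^f\}$ is closed: if $y\in\overline{\mathcal{F}_{(a,b)}^f}$ lay in $\mathcal{G}_{(a,b)}^f$, a neighbourhood of $y$ inside $\mathcal{G}_{(a,b)}^f$ would meet a point of $\mathcal{F}_{(a,b)}^f$, a contradiction; and $\overline{\mathcal{F}_{(a,b)}^f}\subseteq\overline{[a,b]}=[a,b]$. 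As no $f(n)$ is apart from itself, $\neg\forall m[f(m)\;\#_\mathcal{R}\;f(n)]$, so $Ran(f)\subseteq\mathcal{F}_{(a,b)}^f$, whence $\overline{Ran(f)}\subseteq\overline{\mathcal{F}_{(a,b)}^f}=\mathcal{F}_{(a,b)}^f$. (Alternatively one could quote the earlier theorem that every Cantor-Bendixson-closed set is located, together with $Ran(f)\subseteq\mathcal{F}_{(a,b)}^f$.)

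For $\mathcal{F}_{(a,b)}^f\subseteq\overline{Ran(f)}$ I would induct on the way $f$ entered $En_{(a,b)}$. Fix $x\in[a,b]$ with $x\notin\mathcal{G}_{(a,b)}^f$ and fix $k$; I must produce $m$ with $|f(m)-x|<\frac{1}{2^k}$. If $f(2)=b$, then $Ran(f)=\{a,b\}$; comparing $x$ with $a+\frac{1}{2^{k+1}}$ and with $b-\frac{1}{2^{k+1}}$ by co-transitivity leaves, besides the two cases that directly give $|x-a|<\frac{1}{2^k}$ or $|x-b|<\frac{1}{2^k}$, only the case $a<x<b$, in which $a\;\#_\mathcal{R}\;x$ and $b\;\#_\mathcal{R}\;x$, contradicting $x\notin\mathcal{G}_{(a,b)}^f$. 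In the inductive case $f(2)=c$ with $a<c<b$ and $f$ decomposes into $f_{2n}\in En_{(a_n,a_{n+1})}$, $f_{2n+1}\in En_{(b_{n+1},b_n)}$ with $a_n\uparrow c$ and $b_n\downarrow c$. First split on whether $|x-c|<\frac{1}{2^k}$ (then $m=2$) or $x\;\#_\mathcal{R}\;c$; by symmetry assume $x>c$. Pick $n\ge 2$ with $b_n<x$; after a split on whether $|x-b_0|<\frac{1}{2^k}$ or $x<b_0$, ``locate'' $x$ inside a doubled interval $(b_{i+1},b_{i-1})$ with $1\le i\le n-1$ by the usual iterated application of co-transitivity to the finite decreasing list $b_0>\dots>b_n$. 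A final split — $|x-b_i|<\frac{1}{2^k}$ (then $b_i\in Ran(f)$ works), or $x<b_i$, or $x>b_i$ — places $x$ in a single sub-interval $(b_{i+1},b_i)$ or $(b_i,b_{i-1})$. Since $\mathcal{G}_{(b_{i+1},b_i)}^{f_{2i+1}}$, respectively $\mathcal{G}_{(b_i,b_{i-1})}^{f_{2i-1}}$, is a subset of $\mathcal{G}_{(a,b)}^f$ — this is exactly the union decomposition of $\mathcal{G}_{(a,b)}^f$ recorded in the proof that it is open — the hypothesis $x\notin\mathcal{G}_{(a,b)}^f$ forces $x\in\mathcal{F}_{(b_{i+1},b_i)}^{f_{2i+1}}$, respectively $x\in\mathcal{F}_{(b_i,b_{i-1})}^{f_{2i-1}}$, which by the induction hypothesis equals $\overline{Ran(f_{2i+1})}$, respectively $\overline{Ran(f_{2i-1})}$. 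Hence some value $f_{2i\pm1}(m')$ of that sub-enumeration, and therefore some $f(m)$, lies within $\frac{1}{2^k}$ of $x$. The case $x<c$ is the mirror image, using $a_n\uparrow c$ and the sub-enumerations $f_{2n}$.

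The routine part is the bookkeeping of these co-transitivity case splits. The one genuinely delicate point is the intermediate \emph{location} step: constructively pinning $x$ down inside a nested finite family of intervals forces one to accept an overlap and to arrange the case distinctions so that the ambiguous boundary situations are harmlessly absorbed — either into ``$|x-b_i|$ is small'', or into an outright contradiction with $x\notin\mathcal{G}_{(a,b)}^f$. Once that is organized, the induction runs uniformly.
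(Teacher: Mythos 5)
Your proposal is correct and follows essentially the same route as the paper's proof: the easy inclusion $\overline{Ran(f)}\subseteq\mathcal{F}^f_{(a,b)}$ from $Ran(f)\subseteq\mathcal{F}^f_{(a,b)}$ and the closedness of $\mathcal{F}^f_{(a,b)}$, and, for the converse, induction on $En_{(a,b)}$ with the same constructive ``location'' step — split on $|x-c|<\frac{1}{2^k}$ versus $x\;\#_\mathcal{R}\;c$, then, say for $x>c$, place $x$ either within $\frac{1}{2^k}$ of some $b_n$ or inside some $(b_{n+1},b_n)$ and apply the induction hypothesis there. You merely spell out more of the co-transitivity bookkeeping that the paper compresses into ``find $n$ such that either $|b_n-x|<\frac{1}{2^k}$ or $b_{n+1}<x<b_n$''.
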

     
     \begin{proof} 
      Note: $Ran(f) \subseteq \mathcal{F}^f_{(a,b)}$ and conclude: $\overline{Ran(f)} \subseteq \mathcal{F}^f_{(a,b)}$. For the converse we use induction.  Assume: $f\in En_{(a,b)}$. Note: either $f(2)=b$ or $f(2)<b$. 
 
    If   $f(2) = b$, then $Ran(f) =\{a,b\}$ and  $\mathcal{F}_{(a,b)}^f = \{a,b\}=\overline{\{a,b\}}$.
    
    Assume $f(2)<b$.  Define $c:=f(0)$. Define, for each $n$ a function $f_n$ from $\mathbb{N}$ to $[a,b]$ such that, for all $m$, $f_n(m) :=f\bigl(2^n(2m+1)+2\bigr)$. Define, for each $n$, $a_n := f_{2n}(0)$ and $b_n:=f_{2n+1}(1)$.  Assume: $x \in \mathcal{F}^f_{(a,b)}$.  Let $k$ be given and note: either: (i)   $|x-c| < \frac{1}{2^k}$ or: (ii)  $x>c$ or (iii) $x<c$. In  case (i): $|x-f(2)| < \frac{1}{2^k}$. In case (ii),  find $k,n$ such that either (ii)a: $|b_n - x|<\frac{1}{2^k}$ or (ii)b: $b_{n+1}<x<b_{n}$. In case (ii)a: $|f(2^{2n+1}\cdot 3 +2) - x|<\frac{1}{2^k}$. In case (ii)b, use the induction hypothesis in order to find $j$ such that $|f\bigl(2^{2n+1}\dot(2j+1)+2\bigr) - x|<\frac{1}{2^k}$.  Case (iii) is handled like case (ii). In all cases: $\exists i[|f(i) -x|<\frac{1}{2^k}]$. We thus see: $\mathcal{F}^f_{(a,b)}\subseteq \overline{Ran(f)}$. \end{proof}

     For every subset $\mathcal{X}$ of $\mathcal{R}$ the class $Perh_\mathcal{X}$  is the least class $\mathcal{E}$ of subsets of $\mathcal{R}$ such that \begin{enumerate}[\upshape (i)] \item $\mathcal{X} \in \mathcal{E}$, and, \item for each $\mathcal{Y}$ in $\mathcal{E}$, also $Perhaps(\mathcal{X}, \mathcal{Y}):= \{y \in \mathcal{R}|\exists x \in \mathcal{X}[x \;\#_\mathcal{R}\; y \rightarrow y \in \mathcal{Y}]\}$ belongs to $\mathcal{E}$, and, \item for every infinite sequence $\mathcal{Y}_0, \mathcal{Y}_1, \mathcal{Y}_2, \ldots$ of elements of $\mathcal{E}$, also $\bigcup\limits_{n \in \mathbb{N}} \mathcal{Y}_n \in \mathcal{E}$. \end{enumerate}
    The elements of $Perh_\mathcal{X}$ are called the \textit{perhapsive extensions of $\mathcal{X}$}.
    
     Note: for all $a,b$ in $\mathcal{R}$ such that $a<b$, if $\mathcal{X}\subseteq [a,b]$, then, for every $\mathcal{Y}$ in $Perh_\mathcal{X}$, $\mathcal{X}\subseteq\mathcal{Y} \subseteq \overline{\mathcal{X}}\subseteq [a,b]$. The perhapsive extensions of $\mathcal{X}$ make us see how large, intuitionistically, the distance may be between the set $\mathcal{X}$ and its closure $\overline{\mathcal{X}}$, even if the classical mathematician is saying: $\mathcal{X}$ \textit{is} closed and coincides with $\overline{\mathcal{X}}$.
     
     The expression \textit{perhapsive} is used for the reason that the sentences  \textit{John is smart, John is perhaps smart, John is perhaps, perhaps smart} seem to decrease in affirmative power. The same is true for the statements $x \in \mathcal{X}, x \in Perh(\mathcal{X}, \mathcal{X}), x\in Perh(\mathcal{X}, Perh(\mathcal{X}, \mathcal{X}))$. 
     
      \begin{lemma}\label{L:prepextperhaps} For all $a,b$ in $\mathcal{R}$ such that $a<b$,  for all subsets $\mathcal{Z}, \mathcal{Y}$ of $\mathcal{R}$, if  $\mathcal{Z}\subseteq\mathcal{Y }$, then   $Perhaps(\mathcal{Z}\cap[a,b],\mathcal{Y}\cap [a,b]) = Perhaps(\mathcal{Z}, \mathcal{Y})\cap[a,b]$.\end{lemma}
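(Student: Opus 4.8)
The plan is to prove the two inclusions separately; the constructive crux of both is the remark that ``$z\in[a,b]$'' abbreviates $a\le_\mathcal{R} z\;\wedge\;z\le_\mathcal{R} b$, a conjunction of negative, hence stable, statements, so that to prove $y\in[a,b]$ it suffices to refute $y<_\mathcal{R} a$ and, separately, $b<_\mathcal{R} y$.

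For the left-to-right inclusion, suppose $y\in Perhaps(\mathcal{Z}\cap[a,b],\mathcal{Y}\cap[a,b])$ and fix $x\in\mathcal{Z}\cap[a,b]$ with $x\;\#_\mathcal{R}\;y\rightarrow y\in\mathcal{Y}\cap[a,b]$. First I show $y\in[a,b]$: from $y<_\mathcal{R} a$ and $a\le_\mathcal{R} x$ one gets $y<_\mathcal{R} x$, hence $x\;\#_\mathcal{R}\;y$, hence $y\in\mathcal{Y}\cap[a,b]$ and in particular $a\le_\mathcal{R} y$, a contradiction; the assumption $b<_\mathcal{R} y$ is refuted the same way using $x\le_\mathcal{R} b$. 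Now the same $x$, read as an element of $\mathcal{Z}$, witnesses $y\in Perhaps(\mathcal{Z},\mathcal{Y})$, because $\mathcal{Y}\cap[a,b]\subseteq\mathcal{Y}$; together with $y\in[a,b]$ this gives $y\in Perhaps(\mathcal{Z},\mathcal{Y})\cap[a,b]$.

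For the right-to-left inclusion, suppose $y\in[a,b]$ and fix $x_0\in\mathcal{Z}$ with $x_0\;\#_\mathcal{R}\;y\rightarrow y\in\mathcal{Y}$. Since $y\in[a,b]$ is already at hand, it is enough to produce $x'\in\mathcal{Z}\cap[a,b]$ with $x'\;\#_\mathcal{R}\;y\rightarrow y\in\mathcal{Y}$. When the base set $\mathcal{Z}$ lies inside $[a,b]$ --- which is the shape of the sets the Lemma is applied to, $\mathcal{Z}$ being the base of a $Perh$-construction inside $[a,b]$ --- $x_0$ itself serves: if $x_0\;\#_\mathcal{R}\;y$ then $y\in\mathcal{Y}$ and, as $y\in[a,b]$, $y\in\mathcal{Y}\cap[a,b]$. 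To cover the case in which $\mathcal{Z}$ need not be contained in $[a,b]$, one replaces $x_0$ by its clamp $x':=\inf\bigl(b,\sup(a,x_0)\bigr)\in[a,b]$ and checks, by a short calculation with $\inf$, $\sup$ and the co-transitivity of $<_\mathcal{R}$ at the point $x_0$ together with the bounds $a\le_\mathcal{R} y\le_\mathcal{R} b$, that $x'\;\#_\mathcal{R}\;y\rightarrow x_0\;\#_\mathcal{R}\;y$; then $x'$ inherits the required implication from $x_0$.

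The step I expect to need the most care --- the real obstacle --- is exactly the transfer of the exception point across the intersection: one has to be sure the witness used on the left genuinely belongs to $\mathcal{Z}\cap[a,b]$, not merely to $[a,b]$. This is where the hypothesis $\mathcal{Z}\subseteq\mathcal{Y}$ and the concrete nature of the base sets occurring in the intended applications are used. Concretely, I would open the right-to-left argument with the single observation that $x'=_\mathcal{R} x_0$ exactly when $x_0\in[a,b]$, and, in the opposite situation, that $x'\;\#_\mathcal{R}\;x_0$ forces $x_0\;\#_\mathcal{R}\;y$ and hence $y\in\mathcal{Y}\cap[a,b]$ --- so the implication to be witnessed has a true consequent --- and arrange the remaining case distinction around this, keeping the rest of the verification routine.
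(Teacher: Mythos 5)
Your left-to-right inclusion is right: the witness $x\in\mathcal{Z}\cap[a,b]$ lets you refute $y<_\mathcal{R}a$ and $b<_\mathcal{R}y$ separately, giving $y\in[a,b]$ by stability of $\le_\mathcal{R}$, and the same $x$, now read as an element of $\mathcal{Z}$, witnesses $y\in Perhaps(\mathcal{Z},\mathcal{Y})$ because $\mathcal{Y}\cap[a,b]\subseteq\mathcal{Y}$. As you observe, this half does not use the hypothesis $\mathcal{Z}\subseteq\mathcal{Y}$.

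The right-to-left half has exactly the gap you flag at the end of your proposal, and it cannot be closed as written. The clamp $x':=\inf\bigl(b,\sup(a,x_0)\bigr)$ does lie in $[a,b]$, and your computation $x'\;\#_\mathcal{R}\;y\rightarrow x_0\;\#_\mathcal{R}\;y$ is correct, but nothing puts $x'$ in $\mathcal{Z}$, while the existential you must discharge ranges over $\mathcal{Z}\cap[a,b]$, not over $[a,b]$. Noting that the consequent $y\in\mathcal{Y}\cap[a,b]$ is true once $x'\;\#_\mathcal{R}\;x_0$ does not manufacture a witness in $\mathcal{Z}\cap[a,b]$ if that set has none, and $\mathcal{Z}\subseteq\mathcal{Y}$ does not help. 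In fact the stated equality is false: take $[a,b]=[0,1]$, $\mathcal{Z}=\{5\}$, $\mathcal{Y}=\mathcal{R}$, so that $\mathcal{Z}\subseteq\mathcal{Y}$; then $\mathcal{Z}\cap[0,1]$ is provably without elements, so $Perhaps(\mathcal{Z}\cap[0,1],\mathcal{Y}\cap[0,1])$ is empty, while $Perhaps(\{5\},\mathcal{R})\cap[0,1]=\mathcal{R}\cap[0,1]=[0,1]$. So a missing hypothesis, not your ingenuity, is the real issue: adding $\mathcal{Z}\subseteq[a,b]$ makes $\mathcal{Z}\cap[a,b]=\mathcal{Z}$, lets $x_0$ itself be the required witness, and renders $\mathcal{Z}\subseteq\mathcal{Y}$ superfluous. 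Appealing to "the concrete nature of the base sets occurring in the intended applications" cannot substitute for a premise the lemma does not state.
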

      \begin{proof} The proof is left to the reader. \end{proof}
     \begin{lemma}\label{L:extperhaps} Let $a,b,c,d$ in $\mathcal{R}$ be given such that $a<c<d<b$. Let $\mathcal{Z}$ be a  subset of $[a,b]$. Then $Perh_{\mathcal{Z}\cap[c,d]} = \{\mathcal{X} \cap [c,d]|\mathcal{X} \in Perh_\mathcal{Z}\}$. \end{lemma}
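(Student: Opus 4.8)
The plan is to imitate the proof of Lemma \ref{L:ext}, with the open-set operation $\mathcal{H}\mapsto\mathcal{H}^+$ replaced by the operation $\mathcal{Y}\mapsto Perhaps(\mathcal{Z},\mathcal{Y})$ and with Lemma \ref{L:prepextperhaps} playing the role of Lemma \ref{L:prepext}. Both inclusions will come from the minimality clauses in the two relevant inductive definitions. Throughout I shall use the observation recorded just before Lemma \ref{L:prepextperhaps}: since $\mathcal{Z}\subseteq[a,b]$, every $\mathcal{X}$ in $Perh_\mathcal{Z}$ satisfies $\mathcal{Z}\subseteq\mathcal{X}\subseteq\overline{\mathcal{Z}}\subseteq[a,b]$, so the hypothesis $\mathcal{Z}\subseteq\mathcal{X}$ of Lemma \ref{L:prepextperhaps} (applied with the interval $[c,d]$ in place of $[a,b]$) is met whenever it is needed.

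For the inclusion $\{\mathcal{X}\cap[c,d]\mid\mathcal{X}\in Perh_\mathcal{Z}\}\subseteq Perh_{\mathcal{Z}\cap[c,d]}$, I would argue by transfinite induction on $Perh_\mathcal{Z}$, showing that for every $\mathcal{X}$ in $Perh_\mathcal{Z}$ the set $\mathcal{X}\cap[c,d]$ belongs to $Perh_{\mathcal{Z}\cap[c,d]}$. The base case is immediate, as $\mathcal{Z}\cap[c,d]$ is the generator of $Perh_{\mathcal{Z}\cap[c,d]}$. For the $Perhaps$-step, suppose $\mathcal{X}\in Perh_\mathcal{Z}$ and $\mathcal{X}\cap[c,d]\in Perh_{\mathcal{Z}\cap[c,d]}$; by Lemma \ref{L:prepextperhaps} one has $Perhaps(\mathcal{Z},\mathcal{X})\cap[c,d]=Perhaps(\mathcal{Z}\cap[c,d],\mathcal{X}\cap[c,d])$, and the right-hand side lies in $Perh_{\mathcal{Z}\cap[c,d]}$ by clause (ii) of its definition. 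For the limit step one only needs that $\bigl(\bigcup_n\mathcal{X}_n\bigr)\cap[c,d]=\bigcup_n(\mathcal{X}_n\cap[c,d])$, so clause (iii) applies.

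For the reverse inclusion $Perh_{\mathcal{Z}\cap[c,d]}\subseteq\{\mathcal{X}\cap[c,d]\mid\mathcal{X}\in Perh_\mathcal{Z}\}$, I would invoke the minimality of $Perh_{\mathcal{Z}\cap[c,d]}$ and check that the class $\mathcal{E}':=\{\mathcal{X}\cap[c,d]\mid\mathcal{X}\in Perh_\mathcal{Z}\}$ satisfies the three closure conditions in the definition of $Perh_{\mathcal{Z}\cap[c,d]}$: it contains $\mathcal{Z}\cap[c,d]$ because $\mathcal{Z}\in Perh_\mathcal{Z}$; it is closed under $\mathcal{W}\mapsto Perhaps(\mathcal{Z}\cap[c,d],\mathcal{W})$ by the same application of Lemma \ref{L:prepextperhaps} as above together with the fact that $Perhaps(\mathcal{Z},\mathcal{X})\in Perh_\mathcal{Z}$ whenever $\mathcal{X}\in Perh_\mathcal{Z}$; and it is closed under countable unions since intersecting with $[c,d]$ commutes with them. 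Hence $Perh_{\mathcal{Z}\cap[c,d]}\subseteq\mathcal{E}'$.

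I do not expect a real obstacle here. The only point requiring attention is that, unlike the unary operation $\mathcal{H}\mapsto\mathcal{H}^+$, the operation $Perhaps(\mathcal{Z},-)$ carries the base set $\mathcal{Z}$ with it, so relativizing to $[c,d]$ forces one to relativize the base set as well; but that is precisely the content of Lemma \ref{L:prepextperhaps}, whose hypothesis $\mathcal{Z}\subseteq\mathcal{X}$ is automatically satisfied along the way because perhapsive extensions of $\mathcal{Z}$ lie between $\mathcal{Z}$ and $\overline{\mathcal{Z}}$.
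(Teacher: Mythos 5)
Your proof is correct and takes essentially the same route as the paper's: a correspondence between the two inductive classes established via the base case, the $Perhaps$-step (using Lemma \ref{L:prepextperhaps}), and the commutation of intersection with countable unions. The paper's version is terser and leaves the split into two minimality arguments implicit; your proposal spells that out and also rightly notes that the hypothesis $\mathcal{Z}\subseteq\mathcal{X}$ of Lemma \ref{L:prepextperhaps} is available whenever it is invoked, since perhapsive extensions of $\mathcal{Z}$ always contain $\mathcal{Z}$.
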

   \begin{proof} We use induction. Clearly, $\mathcal{Z}\cap[c,d] = \mathcal{Z}\cap[c,d]$.
   Also, according to Lemma \ref{L:prepextperhaps}, for all $\mathcal{X}$ in $Perh_{\mathcal{Z}\cap[c,d]}$, for all $\mathcal{Y}$ in $Perh_\mathcal{Z}$, if $\mathcal{X} = \mathcal{Y} \cap[c,d]$, then $Perhaps(\mathcal{Z}\cap[c,d], \mathcal{X}) = Perhaps(\mathcal{Z}, \mathcal{Y})\cap[c,d]$. Finally, if $\mathcal{X}_0, \mathcal{X}_1, \ldots$ and $\mathcal{Y}_0, \mathcal{Y}_1, \ldots$ are infinite sequences of elements of $Perh_{\mathcal{Z}\cap[c,d]}$ and $Perh_\mathcal{Z}$, respectively, such that, for each $n$, $\mathcal{X}_n = \mathcal{Y}_n \cap[c,d]$, then $\bigcup\limits_{n \in \mathbb{N}} \mathcal{X}_n =(\bigcup\limits_{n \in \mathbb{N}} \mathcal{Y}_n)\cap[c,d]$. \end{proof}
     
     \smallskip
     A subset $\mathcal{X}$ of $\mathcal{R}$ is \textit{eventually closed} if and only if $\overline{\mathcal{X}} \in Perh_\mathcal{X}$.
      \begin{theorem}\label{T:cbevclosed} Let $a,b$ in $\mathcal{R}$ be given such that $a<b$.
    
     For every $f$ in $En_{(a,b)}$, the set $Ran(f)$ is eventually closed.  \end{theorem}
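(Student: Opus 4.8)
The plan is to imitate, nearly verbatim, the proof of Theorem \ref{T:cbevfull}, replacing the co-derivative apparatus $(\cdot)^{+}$, $Ext$, Lemma \ref{L:prepext}, Lemma \ref{L:ext} by the perhapsive apparatus $Perhaps(\mathcal{Z},\cdot)$, $Perh$, Lemma \ref{L:prepextperhaps}, Lemma \ref{L:extperhaps}, and using Theorem \ref{T:ccbfanlike} to rewrite $\overline{Ran(f)}$ as $\mathcal{F}_{(a,b)}^{f}$. The argument runs by induction along the inductive definition of $En_{(a,b)}$. In the base case $f(0)=a$ and $f(n+1)=b$ for all $n$, so $Ran(f)=\{a,b\}$ and $\overline{Ran(f)}=\overline{\{a,b\}}$; one checks directly that $\overline{\{a,b\}}=Perhaps(\{a,b\},\{a,b\})$, since for $y\in\overline{\{a,b\}}$ one has $\neg(y\;\#_{\mathcal{R}}\;a\;\wedge\;y\;\#_{\mathcal{R}}\;b)$ so that the witness $a$ works (if $a\;\#_{\mathcal{R}}\;y$ then $y=_{\mathcal{R}}b$), while the reverse inclusion is the general fact that every perhapsive extension of $\mathcal{Z}$ lies inside $\overline{\mathcal{Z}}$. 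Hence $\overline{Ran(f)}\in Perh_{Ran(f)}$.

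For the inductive step, suppose $f(0)=a$, $f(1)=b$, $f(2)=c$ with $a<c<b$, with $a=a_{0}<a_{1}<\cdots\to c$, $b=b_{0}>b_{1}>\cdots\to c$, and $f_{2n}\in En_{(a_{n},a_{n+1})}$, $f_{2n+1}\in En_{(b_{n+1},b_{n})}$ glued together as in the definition, and write $\mathcal{Z}:=Ran(f)$. I would first observe that $Ran(f_{2n})=\mathcal{Z}\cap[a_{n},a_{n+1}]$ and $Ran(f_{2n+1})=\mathcal{Z}\cap[b_{n+1},b_{n}]$, since $a_{n}$ and $a_{n+1}$ are the first two values of $f_{2n}$ and the pieces of $\mathcal{Z}$ meet only in these junction points. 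By the induction hypothesis $\overline{Ran(f_{2n})}\in Perh_{Ran(f_{2n})}$ and $\overline{Ran(f_{2n+1})}\in Perh_{Ran(f_{2n+1})}$; applying Lemma \ref{L:extperhaps} (the two outermost pieces, whose subinterval abuts an endpoint of $[a,b]$, being treated by the same elementary reasoning, or by the evident monotonicity of $Perh$ in its base set), I obtain a sequence $\mathcal{Y}_{0},\mathcal{Y}_{1},\ldots$ of elements of $Perh_{\mathcal{Z}}$ with $\overline{Ran(f_{n})}\subseteq\mathcal{Y}_{n}$ for every $n$. Set $\mathcal{Y}:=\bigcup_{n}\mathcal{Y}_{n}\in Perh_{\mathcal{Z}}$; then $\mathcal{Y}$ contains every $\overline{Ran(f_{n})}$, hence in particular $a$, $b$ and all the $a_{j}$, $b_{j}$.

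Using $\overline{Ran(f)}=\mathcal{F}_{(a,b)}^{f}$ (Theorem \ref{T:ccbfanlike}), the crux is to show $\{x\in\mathcal{F}_{(a,b)}^{f}\mid x\;\#_{\mathcal{R}}\;c\}\subseteq Perhaps(\mathcal{Z},\mathcal{Y})$, and then, via the witness $c$, $\mathcal{F}_{(a,b)}^{f}\subseteq Perhaps\bigl(\mathcal{Z},Perhaps(\mathcal{Z},\mathcal{Y})\bigr)$. For the first inclusion, take $x\in\mathcal{F}_{(a,b)}^{f}$ with, say, $x>c$; only finitely many of the intervals $[b_{n+1},b_{n}]$ come within a fixed positive distance of $x$, so, by co-transitivity, either $x$ lies in an open interval $(b_{k+1},b_{k})$, in which case $x\in\mathcal{F}_{(b_{k+1},b_{k})}^{f_{2k+1}}=\overline{Ran(f_{2k+1})}\subseteq\mathcal{Y}$, or $x$ lies within a small neighbourhood of one of the finitely many relevant junction points $b_{j}$, in which case the witness $b_{j}\in\mathcal{Z}$ does the job, because $b_{j}\;\#_{\mathcal{R}}\;x$ then places $x$ in $(b_{j+1},b_{j})$ or $(b_{j},b_{j-1})$ and hence in $\mathcal{Y}$. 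The case $x<c$ is symmetric. Since $Perhaps(\mathcal{Z},\cdot)$ maps $Perh_{\mathcal{Z}}$ into itself and every perhapsive extension of $\mathcal{Z}$ lies inside $\overline{\mathcal{Z}}$, we conclude $\overline{Ran(f)}=Perhaps\bigl(\mathcal{Z},Perhaps(\mathcal{Z},\mathcal{Y})\bigr)\in Perh_{Ran(f)}$, that is, $Ran(f)$ is eventually closed.

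The step I expect to cost the most care is precisely this last inclusion. Constructively $\overline{A\cup B}$ may be strictly larger than $\overline{A}\cup\overline{B}$, so a limit point of $Ran(f)$ apart from $c$ need not belong to any single $\overline{Ran(f_{n})}$, nor even to $\mathcal{Y}$; it is only by allowing the ``one well-known exception'' at a junction point --- that is, passing to $Perhaps(\mathcal{Z},\mathcal{Y})$, and then once more to capture $c$ itself --- that all of $\mathcal{F}_{(a,b)}^{f}$ is recovered. This is the exact counterpart of the two applications of $(\cdot)^{+}$ in the proof of Theorem \ref{T:cbevfull}, and the remaining bookkeeping with the junction points is routine use of co-transitivity.
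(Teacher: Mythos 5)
Your argument follows essentially the same route as the paper's own proof: induction on the definition of $En_{(a,b)}$, lifting the inductive hypothesis for the pieces $Ran(f_n)$ via Lemma \ref{L:extperhaps} into $Perh_{Ran(f)}$, taking the union $\mathcal{Y}$, and then applying $Perhaps(Ran(f),\cdot)$ twice, once to absorb the limit points apart from $c$ and once more for $c$ itself. The differences are cosmetic: the paper does not route through Theorem \ref{T:ccbfanlike}, and it locates the witness junction point by fixing $n$ with (the evidently intended) $b_{n+2}<x\le b_n$ rather than by your slightly looser talk of ``finitely many relevant junction points'', but in both cases the essential mechanism is the same use of co-transitivity together with $b_n\to c$ to single out $b_{n+1}$ as the witness for the first application of $Perhaps$.
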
 
     
     \begin{proof} We use induction. Let $f$ in $En_{(a,b)}$ be given. Note: either $f(2) =b$ or $f(2)<b$. If $f(2) =b$, then $Ran(f) = \{a,b\}=\overline{Ran(f)}$. Assume $f(2)<b$. Define, for each $n$, a function $f_n$ from $\mathbb{N}$ to $[a,b]$ such that, for each $m$, $f_n(m) :=f\bigl(2^n(2m+1)\bigr)$. Define $c:= f(0)$ and, for each $n$, $a_n :=f(2^{2n}+2)$ and $b_n := f(2^{2n+1}\cdot 3+2)$. Note: for each $n$, $f_{2n} \in En_{(a_n, a_{n+1})}$ and $f_{2n+1} \in En_{(b_{n+1}, b_{n})}$, and 
   $Ran(f)=\{c\}\cup \bigcup\limits_{n \in \mathbb{N}} Ran(f_{n})$. Assuming that, for each $n$, $Ran(f_n)$ is eventually closed, we determine, using Lemma \ref{L:extperhaps}, an infinite sequence $\mathcal{Y}_0, \mathcal{Y}_1, \ldots$ of elements of $Perh_\mathcal{G}$ such that, for each $n$, $\mathcal{Y}_{2n}\cap[a_n, a_{n+1}] =\overline{Ran(f_{2n})}$ and  $\mathcal{Y}_{2n+1}\cap[b_{n+1}, b_{n}] =\overline{Ran(f_{2n+1})}$. Define $\mathcal{Y} = \bigcup\limits_{n \in \mathbb{N}}\mathcal{Y}_n$ and note: $\mathcal{Y} \in Perh_\mathcal{G}$. 
   
   Assume $x \;\#_\mathcal{R}\; c$ and $x \in \overline{Ran(f)}$. First assume $x>c$. Find $n$ such that $b_{n+2} < c \le b_n$. Note: if $x \;\#_\mathcal{R}\; b_{n+1}$, then either $x<b_{n+1}$ and, therefore, $x\in \overline{Ran(f_{2n+3})}\subseteq \mathcal{Y}_{2n+3} \subseteq \mathcal{Y}$, or  $x>b_{n+1}$ and, therefore, $x\in \overline{Ran(f_{2n+1})}\subseteq \mathcal{Y}_{2n+1} \subseteq \mathcal{Y}$. Conclude: $x \in \mathcal{Y}^+$. Now assume: $x<c$, and conclude, by a similar argument, $x \in \mathcal{Y}^+$. 
   
   We thus see: $\forall x \in \overline{Ran(f)}[x\;\#_\mathcal{R}\;c \rightarrow x \in \mathcal{Y}^+]$. Conclude: $\overline{ Ran(f)}\subseteq \mathcal{Y}^{++} $. Conclude: $Ran(f)$ is eventually closed.
 \end{proof}\subsubsection{Ordering  $En_{(-\pi,\pi)}$ again}\label{SSS:orderperhaps} One may define relations $\prec_{Perh}$ and $\preceq_{Perh}$ on the collection of the Cantor-Bendixson-enumerations  associated with $(-\pi,\pi)$, as follows.
  
  \smallskip
  $f \prec_{Perh}g$ if and only if there exists a perhapsive extension  $\mathcal{H}$ of the set  $\bigl(-\frac{1}{2}\pi+\frac{1}{2}Ran(f)\bigr) \cup \bigl(\frac{1}{2}\pi+\frac{1}{2}Ran(g)\bigr)$ such that $\bigl(-\frac{1}{2}\pi+\frac{1}{2}\overline{Ran(f)}\bigr) \subseteq \mathcal{H}$ and \textit{not:} $ (\frac{1}{2}\pi+\frac{1}{2}\overline{Ran(g)}\bigr) \subseteq \mathcal{H}$, and 
  
    \smallskip
   $f \preceq_{Perh} g$ if and only if for all perhapsive extensions  $\mathcal{H}$ of the set  $\bigl(-\frac{1}{2}\pi+\frac{1}{2}Ran(f)\bigr) \cup \bigl(\frac{1}{2}\pi+\frac{1}{2}Ran(g)\bigr)$, if  $\bigl(-\frac{1}{2}\pi+\frac{1}{2}\overline{Ran(g)}\bigr) \subseteq \mathcal{H}$, then  $  \bigl(\frac{1}{2}\pi+\frac{1}{2}\overline{Ran(f)}\bigr) \subseteq \mathcal{H}$.
   
   \smallskip One might say: $f \prec_{Perh} g$ means: \textit{the  perhapsive rank of $Ran(f)$ is strictly lower than 
  the perhapsive rank of $Ran(g) $},  and 
  
  $f \preceq_{Perh} g$ means: \textit{the perhapsive rank of $Ran(f)$ is not higher than 
  the perhapsive rank of $Ran(g)$}.
  
  \smallskip

  There are close connections between the relations $\prec_{Perh}$ and $\preceq_{Perh}$ and the relations $\prec_{CB}$, $\preceq_{CB}$, introduced in Subsubsection \ref{SSS:ordercb}, see \cite{veldman05}.
  
  For a classical mathematician, this is  (perhaps) embarrassing, as, in his world, for all $f$ in $En_{(-\pi,\pi)}$, for every perhapsive extension $\mathcal{Y}$ of $Ran(f)$, $Ran(f) = \mathcal{Y} =\overline{Ran(f)}$. 
    \section{Every  open subset $\mathcal{G}$ of $[-\pi,\pi]$ such that $[-\pi,\pi]\setminus \mathcal{G}$ is located and almost-enumerable guarantees  uniqueness} \subsection{The complement of an  eventually full open subset of  $(-\pi,\pi)$ is almost-enumerable}  
  
  \smallskip
  
 \subsubsection{} Let $\mathcal{X}$ be a subset of $\mathcal{R}$ and let $f$ be a function from $\mathbb{N}$ to $\mathcal{R}$. 
    
    \textit{$f$ is an enumeration of $X$} or \textit{$f$ enumerates $X$} if and only if $\forall x \in \mathcal{X}\exists n[x= f(n)]$.

     \textit{$f$ is an almost-enumeration of $X$} or \textit{$f$ almost-enumerates $X$} if and only if $\forall x \in \mathcal{X}\forall \gamma \in \mathcal{N}\exists n[\exists n[|f(n) - x|<\frac{1}{2^{\gamma(n)}}]$.

    In order to understand the second definition one should think of $\gamma$ as \textit{possible evidence} for showing: $\forall n[x \;\#_\mathcal{R} f(n)]$: one is hoping: $\forall n[|f(n) - x|\ge\frac{1}{2^{\gamma(n)}}]$.
    $f$ is an almost-enumeration if every possible evidence for the statement  $\forall n[x \;\#_\mathcal{R} f(n)]$ fails in a constructive way.
    
    \smallskip
   A subset $\mathcal{X}$ of $\mathcal{R}$   is \textit{enumerable} or: \textit{almost-enumerable}, respectively, if and only if there exists an enumeration of $\mathcal{X}$, or:   an almost-enumeration  of $\mathcal{X}$, respectively.
    
    Theorem \ref{T:openevfullalmostenumerable}(ii) is a substitute for the classical theorem that every closed and reducible subset of $[-\pi,\pi]$ is at most countable. 
   \begin{theorem}\label{T:openevfullalmostenumerable} Let $\mathcal{G}$ be an open subset of $(-\pi,\pi)$. \begin{enumerate}[\upshape (i)] \item For each $\mathcal{X}$ in $Ext_\mathcal{G}$, for all $a,b$ such that $-\pi\le a<b\le\pi$, if $[a,b] \subseteq \mathcal{X}$, then $[a,b]\setminus \mathcal{G}$ is almost-enumerable, and, \item if $\mathcal{G}$ is eventually full, then the set $[-\pi,\pi]\setminus \mathcal{G}$ is almost-enumerable. 
   \end{enumerate}\end{theorem}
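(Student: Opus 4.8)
The plan is to prove (i) by transfinite induction on $Ext_\mathcal{G}$, via the principle of transfinite induction stated in the excerpt, and then to obtain (ii) as the instance $\mathcal{X}=(-\pi,\pi)$ after a brief argument dealing with the endpoints $-\pi,\pi$. I would rely on two routine facts: that the empty set and every finitely enumerable set of reals are almost-enumerable; and that a countable union of almost-enumerable sets is almost-enumerable --- if $g_m$ almost-enumerates $\mathcal{X}_m$ for each $m$ then, for a fixed pairing $\langle\cdot,\cdot\rangle$, the function $f$ with $f(\langle m,i\rangle)=g_m(i)$ almost-enumerates $\bigcup_m\mathcal{X}_m$, since from $x\in\bigcup_m\mathcal{X}_m$ one obtains $m$ with $x\in\mathcal{X}_m$ and then applies the almost-enumeration property of $g_m$ to the sequence $i\mapsto\gamma(\langle m,i\rangle)$. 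I would also use, as already noted in the paper, that every member of $Ext_\mathcal{G}$ is open.

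For (i), let $\mathcal{E}'$ be the collection of all \emph{open} subsets $\mathcal{X}$ of $\mathcal{R}$ such that, for all $a,b$ with $-\pi\le a<b\le\pi$, if $[a,b]\subseteq\mathcal{X}$ then $[a,b]\setminus\mathcal{G}$ is almost-enumerable; I would verify that $\mathcal{E}'$ satisfies the three closure clauses of the induction principle, so that $Ext_\mathcal{G}\subseteq\mathcal{E}'$, which is exactly (i). The base case $\mathcal{G}\in\mathcal{E}'$ is trivial, since $[a,b]\subseteq\mathcal{G}$ makes $[a,b]\setminus\mathcal{G}$ empty. For the limit clause, let $\mathcal{X}_0,\mathcal{X}_1,\dots\in\mathcal{E}'$ and $[a,b]\subseteq\bigcup_n\mathcal{X}_n$; since each $\mathcal{X}_n$ is open, every point of $[a,b]$ lies in an open interval contained in some $\mathcal{X}_n$, so by the Heine-Borel Theorem (equivalently, the Fan Theorem) finitely many such intervals cover $[a,b]$, and one may refine this finite cover to a partition $a=t_0<t_1<\dots<t_M=b$ with each $[t_j,t_{j+1}]$ contained in one of them, hence in some $\mathcal{X}_{n_j}$. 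By the induction hypothesis each $[t_j,t_{j+1}]\setminus\mathcal{G}$ is almost-enumerable, and $[a,b]\setminus\mathcal{G}$ is their finite union; a given $x\in[a,b]\setminus\mathcal{G}$ is located in some $[t_j,t_{j+1}]$ by overlapping (hence decidable) comparisons with the partition points, so a finite merge of the pieces' almost-enumerations works.

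The successor clause, $\mathcal{X}\in\mathcal{E}'\Rightarrow\mathcal{X}^+\in\mathcal{E}'$, is the heart of the matter. Given $[a,b]\subseteq\mathcal{X}^+$, each $x\in[a,b]$ comes with $n_x,y_x$ such that $(x-\frac{1}{2^{n_x}},x+\frac{1}{2^{n_x}})$ lies in $\mathcal{X}$ except possibly for the single point $y_x$. Applying Heine-Borel and refining as above, I would obtain a partition $a=t_0<\dots<t_M=b$ in which every point of $[t_j,t_{j+1}]$ that is apart from a certain associated point $y_j$ belongs to $\mathcal{X}$. Fix $j$ and clamp: $w_j:=\sup(t_j,\inf(t_{j+1},y_j))\in[t_j,t_{j+1}]$. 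For each $p$, the two closed intervals obtained by deleting the open $\frac{1}{2^p}$-ball around $w_j$ from $[t_j,t_{j+1}]$ consist of points strictly on one side of $w_j$, hence apart from $y_j$, hence in $\mathcal{X}$ (when such an interval degenerates to a single endpoint of $[t_j,t_{j+1}]$, one simply lists that endpoint), so by the induction hypothesis their traces outside $\mathcal{G}$ are almost-enumerable. I would then almost-enumerate $[t_j,t_{j+1}]\setminus\mathcal{G}$ by merging the singleton $\{w_j\}$, the partition endpoints, and these countably many almost-enumerations over $p$: for $x$ there and $\gamma\in\mathcal{N}$, one decides the overlapping alternatives ``$|x-w_j|$ is small enough that $w_j$ itself suffices'' versus ``$x$ is apart from $w_j$'', and in the second case $x$ lies, for suitable $p$, in one of the deleted-ball pieces, whose almost-enumeration catches it with the required precision. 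Merging over $j$, as in the limit clause, finishes the induction.

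Part (ii) follows: $\mathcal{G}$ eventually full means $(-\pi,\pi)\in Ext_\mathcal{G}$, so (i) with $\mathcal{X}=(-\pi,\pi)$ shows $[-\pi+\frac{1}{2^p},\pi-\frac{1}{2^p}]\setminus\mathcal{G}$ almost-enumerable for all $p\ge 2$; merging these with the singletons $\{-\pi\}$, $\{\pi\}$ and noting that any $x\in[-\pi,\pi]\setminus\mathcal{G}$ either approaches $-\pi$ arbitrarily closely, or approaches $\pi$ arbitrarily closely, or satisfies $-\pi<x<\pi$ and so lies in some $[-\pi+\frac{1}{2^p},\pi-\frac{1}{2^p}]$ (a trichotomy again resolved by dovetailing semidecidable conditions), one obtains an almost-enumeration of $[-\pi,\pi]\setminus\mathcal{G}$. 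I expect the main obstacle to be precisely the constructive bookkeeping in the successor clause: since one cannot decide whether or where the exceptional point $y_j$ sits inside $[t_j,t_{j+1}]$, the clamping to $w_j$, the explicit listing of $w_j$ and of the partition endpoints, and the verification that the deleted-ball pieces genuinely lie in $\mathcal{X}$ --- so that the induction hypothesis really applies --- must all be set up with care; the base and limit clauses and the endpoint argument for (ii) are routine.
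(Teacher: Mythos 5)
Your overall plan is the paper's plan: transfinite induction on $Ext_{\mathcal{G}}$ with base, successor and limit clauses, invoking Heine--Borel (the Fan Theorem) in the successor and limit clauses and merging countable families of almost-enumerations by a pairing function, followed by the endpoint trick for (ii). The base and limit clauses, the merging lemma, and the treatment of $-\pi,\pi$ in (ii) all match what the paper does. The genuine divergence is in the successor clause, and it is worth flagging because your variant has a gap that the paper's version sidesteps cleanly.

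The paper does \emph{not} refine the Heine--Borel cover to a partition. It keeps the finite cover $\bigl\{(c_i,d_i)\bigr\}_{i<n}$ together with the exceptional points $y_i$ lying \emph{strictly inside}, i.e.\ $c_i<y_i<d_i$ with $(c_i,y_i)\subseteq\mathcal{X}$ and $(y_i,d_i)\subseteq\mathcal{X}$, and then fixes a single $m_0$ with $y_i-c_i>\frac{1}{2^{m_0-1}}$ and $d_i-y_i>\frac{1}{2^{m_0-1}}$ for every $i<n$. Consequently the nested compact pieces $[c_i+\frac{1}{2^{m_0+m}},\,y_i-\frac{1}{2^{m_0+m}}]$ and $[y_i+\frac{1}{2^{m_0+m}},\,d_i-\frac{1}{2^{m_0+m}}]$ are non-degenerate for \emph{every} $m$, so the induction hypothesis applies to them without any case distinction. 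The almost-enumeration is then $f(i)=y_i$ for $i<n$ together with a merge of the countably many pieces, and the verification for a given $x$ and $\gamma$ is the simple trichotomy ``$|x-y_i|<\frac{1}{2^{\gamma(i)}}$, or $x\in(c_i,y_i)$, or $x\in(y_i,d_i)$''.

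Your variant refines to a partition $a=t_0<\dots<t_M=b$ and clamps the exceptional point to $w_j=\sup\bigl(t_j,\inf(t_{j+1},y_j)\bigr)$. The apartness argument you are implicitly relying on is correct: from $z\in[t_j,t_{j+1}]$ and $z>w_j$ one gets $z>\inf(t_{j+1},y_j)$, hence $z>t_{j+1}$ or $z>y_j$, and the first is excluded, so $z>y_j$; symmetrically for $z<w_j$. But clamping pushes $w_j$ onto the boundary of the cell in the (undecidable) case $y_j\le t_j$ or $y_j\ge t_{j+1}$, and then one of the two ``deleted-ball'' pieces $[t_j,\,w_j-\frac{1}{2^p}]$, $[w_j+\frac{1}{2^p},\,t_{j+1}]$ has nonpositive length. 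Your remedy --- ``when such an interval degenerates to a single endpoint, one simply lists that endpoint'' --- is not a construction, because \emph{whether} it degenerates is not decidable: you cannot ask $\mathcal{R}$ whether $w_j-\frac{1}{2^p}\le t_j$. What you would actually have to do is choose, for each $p$, between the overlapping decidable alternatives $w_j-\frac{1}{2^p}<t_j+\frac{1}{2^{p+1}}$ (in which case the would-be left piece is contained in a $\frac{1}{2^{p+1}}$-ball around $t_j$, so listing $t_j$ with the right precision suffices) and $w_j-\frac{1}{2^p}>t_j$ (in which case the interval is proper and the induction hypothesis applies). That repair works, but it is precisely the bookkeeping the paper avoids by choosing $m_0$ once and for all. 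There is also a second, smaller point you glide over: once you have a partition, assigning to each cell $[t_j,t_{j+1}]$ a cover element $(c_i,d_i)$ that contains it is itself not directly decidable, and you need to have arranged a Lebesgue-number margin to make the assignment by overlapping alternatives. Both issues are fixable; you were right to single out the successor clause as the main obstacle, but the fix needs to be spelled out with overlapping decidable alternatives rather than a case distinction on degeneracy. Working directly with the open cover and its interior exceptional points, as the paper does, is the cleaner route.
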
  \begin{proof} (i) We use induction on (the definition of) $Ext_\mathcal{G}$. \begin{enumerate}[\upshape 1.]\item Note: if $[a,b] \subseteq \mathcal{G}$, then   $[a,b]\setminus\mathcal{G} = \emptyset$ is almost-enumerable. 
   
 \item  Assume: $\mathcal{X} \in Ext_\mathcal{G}$ and  for all $a,b$ such that $-\pi\le a<b\le\pi$, if $[a,b] \subseteq \mathcal{X}$, then $[a,b]\setminus \mathcal{G}$ is almost-enumerable. 
 
\noindent Assume $-\pi\le a<b\le\pi$ and $[a,b] \subseteq \mathcal{X}^+$.  Then: $$\forall x \in [a,b]\exists c\exists d \exists y[ c<x<d \;\wedge\;c<y<d \;\wedge\; (c,y)\subseteq \mathcal{X}\;\wedge\;(y,d)\subseteq \mathcal{X}].$$ Using the Heine-Borel Theorem we find $n$ in $\mathbb{N}$, and, for each $i<n$, $c_i, d_i,y_i$ such that $$\forall x \in [a,b]\exists i<n[ c_i<x<d_i \;\wedge\;c_i<y_i<d_i \;\wedge\; (c_i,y_i)\subseteq \mathcal{X}\;\wedge\;(y_i,d_i)\subseteq \mathcal{X}].$$ Find $m_0>0$ such that, for each $i<n$, $y_i -c_i>\frac{1}{2^{m_0-1}}$ and $d_i -y_i>\frac{1}{2^{m_0-1}}$. Find, for each $i<n$, for each $m$,  an almost-enumeration $f_{i,m}$ of $[c_i +\frac{1}{2^{m_0 +m}}, y_i -\frac{1}{2^{m_0 +m}}]\setminus \mathcal{G}$ and an almost-enumeration $g_{i,m}$ of $[y_i +\frac{1}{2^{m_0 +m}}, d_i -\frac{1}{2^{m_0 +m}}]\setminus \mathcal{G}$. Define $f$ from $\mathbb{N}$ to $\mathcal{R}$ such that,  for each $i<n$, $f(i) = y_i$ and, for each $m$, for each $i<n$, for each $k$, $f\bigl(n+2^{m\cdot 2n + 2i}(2k+1)\bigr) := f_{i,m}(k)$ and $f\bigl(n+2^{m\cdot 2n + 2i+1}(2k+1)\bigr) = g_{i,m}(k)$. We now prove that $f$ is an almost-enumeration of $[a,b]\setminus \mathcal{G}$. Assume: $x \in [a,b]\setminus \mathcal{G}$ and $\gamma \in \mathcal{N}$.   Find $i<n$ such that  $x \in (c_i, d_i)$.  Note: either (i) $|x - y_i| =|x-f(i)|< \frac{1}{2^{\gamma(i)}}$, or (ii) $x\in (c_i, y_i)$, or: (iii) $x \in (y-i,d_i)$. In case (i), we are sure that $\exists n[|x-f(n)|<\frac{1}{2^{\gamma(n)}}]$. In case (ii), find $m$ such that $x \in [c_i +\frac{1}{2^{m_0 +m}}, y_i -\frac{1}{2^{m_0 +m}}]\setminus \mathcal{G}$. Define $\delta$ in $\mathcal{N}$ such that, for each $k$, $\delta(k) = \gamma\bigl(n+2^{m\cdot 2n + 2i}(2k+1)\bigr)$, and find $k$ such that $|x-f_{i,m}(k)|<\frac{1}{2^{\delta(k)}}$. Define $l:=n+2^{m\cdot 2n + 2i}(2k+1)$ and conclude: $|x-f(l)|<\frac{1}{2^{\gamma(l)}}$. Again, we may conclude: $\exists n[|x-f(n)|<\frac{1}{2^{\gamma(n)}}]$.  In case (iii), we reason similarly. We thus see that $\mathcal{X}^+$ has the required property.
   
   \item Assume: $\mathcal{X}_0, \mathcal{X}_1, \mathcal{X}_2, \ldots$ is an infinite sequence of elements of $  Ext_\mathcal{G}$ and, for all $n$,   for all $a,b$ such that $-\pi\le a<b\le\pi$, if $[a,b] \subseteq \mathcal{X}_n$, then $[a,b]\setminus \mathcal{G}$ is almost-enumerable. Assume: $-\pi\le a<b\le\pi$ and $[a,b]\subseteq \bigcup\limits_{n \in \mathbb{N}}\mathcal{X}_n$. Then $$\forall x \in [a,b]\exists c\exists d \exists n[ c<x<d \;\wedge\; (c,d) \subseteq \mathcal{X}_n].$$Applying the Heine-Borel Theorem, we find $n$ in $\mathbb{N}$ and, for each $i<n$, $c_i, d_i, m_i$ such that $$\forall x \in [a,b] \exists i<n[c_i<x<d_i]\;\wedge\;\forall i<n[[c_i,d_i]\subseteq \mathcal{X}_{m_i}]. $$ 
    
    Find, for each $i<n$, an almost-enumeration $f_i$ of $[c_i,d_i]\setminus \mathcal{G}$. Let $f$ be a function from $\mathbb{N}$ to $\mathcal{R}$ such that, for each $i<n$, for each $m$, $f(m\cdot n +i) = f_i(m)$.  One verifies easily that $f$ is an almost-enumeration of $[a,b]\setminus \mathcal{G}$. We thus see that $\bigcup\limits_{n \in \mathbb{N}}\mathcal{X}_n$ has the required property.\end{enumerate}
    
    \smallskip
      (ii) This is an easy consequence of (i).\end{proof}
 \subsection{A converse result} Theorem \ref{T:ccu} will be  a converse to Theorem    \ref{T:openevfullalmostenumerable}(ii) and a substitute for the classical result that every countable and closed subset of $[-\pi,\pi]$ is reducible. The argument the classical mathematician  uses for this statement is that the kernel of a countable closed set must be empty as a non-empty kernel gives an uncountable set.  We keep far from  thoughts about cardinality.
  
  \subsubsection{}\label{SSS:hblocated} Located subsets of $\mathcal{R}$ entered this paper in Subsubsection \ref{SSS:located}.

  We shall use the following extension of the \textit{Heine-Borel Theorem}: \begin{quote}{\it  Let $a,b$ in $\mathcal{R}$ be given such that $a<b$. Let $\mathcal{G}$ be an open subset of $(a,b)$ such that $[a,b]\setminus \mathcal{G}$ is located. Let $\mathcal{B}$ be a subset of $\mathcal{R}^2$ such that   $\forall x \in [a,b]\setminus \mathcal{G}\exists ( c,d) \in \mathcal{B}[c<x<d]$.   Then  there exist $n$ in $\mathbb{N}$, $( c_0, d_0), ( c_1, d_1), \ldots, ( c_{n}, d_{n})  $ in $\mathcal{B}$ such that $\forall x \in [a,b]\setminus \mathcal{G}\exists i\le n [c_n<x<d_n]$.}\end{quote}

  The proof resembles the proof in  Subsubsection \ref{SSS:hb} but it is a bit more difficult.  Let $a,b$ in $\mathcal{R}$ be given such that $a<b$ and  let $\mathcal{G}$ be an open subset of $(a,b)$ such that $\mathcal{F}:=[a,b]\setminus \mathcal{G}$ is located. Note: $\{a,b\}\subseteq \mathcal{F}$.

  We define a function $E$ associating to every $s$ in $Bin$ a pair $E(s) =\bigl(E_0(s), E_1(s)\bigr)$ of real numbers, such that $\exists x \in \mathcal{F}[E_0(s) \le x \le E_1(s)]$. 
The definition is by induction to $\mathit{length}(s)$. We first define    $E\bigl(( \;)\bigr) := (a,b)$. Now assume  $s \in Bin$ and $E(s) := (r,u)$ has been defined.  Define $L(r,u):=(r, \frac{r+u}{2})$ and $R(r,u) :=(\frac{r+u}{2}, u)$, the \textit{left half} and the \textit{right half} of $(r,u)$, respectively.  Also define $L^+(r,u):=\bigl(r-\frac{1}{12}(r-u), \frac{r+u}{2}+\frac{1}{12}(r-u)\bigr)$ and $R^+(r,u):=\bigl(\frac{r+u}{2}-\frac{1}{12}(r-u), u+\frac{1}{12}(r-u)\bigr)$, the \textit{extended left half} and the \textit{extended right half} of $(r,u)$, respectively. 

We may decide: either $d( \frac{3}{4}r + \frac{1}{4}u, \mathcal{F}) > \frac{1}{4}(u-r)$ or: $d( \frac{3}{4}r + \frac{1}{4}u, \mathcal{F}) < \frac{1}{3}(u-r)$, and we  may also decide: either $d( \frac{1}{4}r + \frac{3}{4}u, \mathcal{F}) > \frac{1}{4}(u-r)$ or: $d( \frac{1}{4}r + \frac{3}{4}u, \mathcal{F}) < \frac{1}{3}(u-r)$. 

Note: if both $d( \frac{3}{4}r + \frac{1}{4}u, \mathcal{F}) > \frac{1}{4}(u-r)$ and $d( \frac{1}{4}r + \frac{3}{4}u, \mathcal{F}) > \frac{1}{4}(u-r)$, then $\neg \exists x \in \mathcal{F}[r\le x \le u]$, so this can not happen. We distinguish three cases.

 If we first discover  $d( \frac{3}{4}r + \frac{1}{4}u, \mathcal{F}) < \frac{1}{3}(u-r)$ and $d( \frac{1}{4}r + \frac{3}{4}u, \mathcal{F}) < \frac{1}{3}(u-r)$, we define: $E(s\ast\langle 0\rangle):= L^+(r,u)$ and $E(s\ast\langle 1\rangle):= R^+(r,u)$.

If we first discover  $d( \frac{3}{4}r + \frac{1}{4}u, \mathcal{F}) < \frac{1}{3}(u-r)$ and $d( \frac{1}{4}r + \frac{3}{4}u, \mathcal{F}) > \frac{1}{4}(u-r)$, we define: $E(s\ast\langle 0\rangle):= E(s\ast\langle 1\rangle) := L^+(r,u)$.
    
   If we first discover  $d( \frac{3}{4}r + \frac{1}{4}u, \mathcal{F}) > \frac{1}{4}(u-r)$ and $d( \frac{1}{4}r + \frac{3}{4}u, \mathcal{F}) < \frac{1}{3}(u-r)$, we define: $E(s\ast\langle 0\rangle):= E(s\ast\langle 1\rangle) := R^+(r,u)$. 
   
   Note: for both $i\le 1$, $E_1(s\ast\langle i \rangle ) - E_0(s\ast\langle i \rangle) = \frac{2}{3}\bigl(E_1(s) - E_0(s)\bigr)$. 
This completes the   definition of the function $E$.

 Let $\phi$ be a function from Cantor space $\mathcal{C}$ to $[a,b]$ such that, for every $\alpha$ in $\mathcal{C}$, for every $n$, $E_0(\overline \alpha n)<\phi(\alpha)<E_1(\overline \alpha n)$. One may verify  that $\phi$ is a well-defined surjective map from $\mathcal{C}$ onto $\mathcal{F}$.
 
 One  may complete the argument as in Subsubsection \ref{SSS:hb}.

  \subsubsection{Bar Induction}
  We also need \textit{Brouwer's Thesis on Bars}, or: \textit{Brouwer's Principle of Monotone Bar Induction}. In  \cite{brouwer24}, \cite{brouwer27} and \cite{brouwer54},  Brouwer used this principle for proving the Fan Theorem. 
  The Principle of Monotone Bar Induction is much stronger than the Fan Theorem.
  
  \smallskip Let $\mathbb{N}^\ast$ be the set of all finite sequences $c=\bigl(c(0), c(1), \ldots,c(n-1)\bigr)$ of natural numbers, where $n = \mathit{length}(c)$.   The empty sequence $(\;)$ is one of the elements of $\mathbb{N}^\ast$. For all $c=\bigl(c(0), c(1), \ldots,c(n-1)\bigr),d=\bigl(d(0), d(1), \ldots,d(p-1)\bigr)$ in $\mathbb{N}^\ast$, $c\ast d$ is the element of $\mathbb{N}^\ast$ that we obtain by putting $d$ behind $c$: $c\ast d= \bigl(c(0), c(1), \ldots,c(n-1),d(0), d(1), \ldots,d(p-1)\bigr)$. 
  
  Let $\mathcal{N}$ be the set of all infinite sequences $\alpha=\alpha(0), \alpha(1), \dots$ of natural numbers. For every $\alpha$ in $\mathcal{N}$, for every $n$, we define: $\overline \alpha n :=\bigl(\alpha(0), \alpha(1), \ldots,\alpha(n-1)\bigr)$. A subset $B$ of $\mathbb{N}^\ast$ is called a \textit{bar (in $\mathcal{N}$)} if and only if $\forall \alpha \in \mathcal{N} \exists n[\overline \alpha n \in B]$. A subset $B$ of $\mathbb{N}^\ast$ is called \textit{monotone} if and only if $\forall c[c\in B \rightarrow \forall m[c\ast( m) \in B]]$. A subset $C$ of $\mathbb{N}^\ast$ is called \textit{inductive} if and only if $\forall c[\forall m[c\ast(m) \in C]\rightarrow c \in C]$. Brouwer's Principle of Induction on Monotone Bars says the following, see \cite{kleenevesley65}, $ ^\ast 27.13$ and \cite{veldman06}. \begin{quote}{\it For all subsets $B,C$ of $\mathbb{N}^\ast$, if $B$ is monotone and a bar in $\mathcal{N}$, and $B\subseteq C$ and $C$ is inductive, then $( \;)\in C$.} \end{quote} \begin{theorem}\label{T:ccu} Let $\mathcal{G}$ be an open subset of $(-\pi,\pi)$. If $[-\pi, \pi]\setminus \mathcal{G}$ is  located and almost-enumerable, then $\mathcal{G}$ is eventually full. 
      
       \end{theorem}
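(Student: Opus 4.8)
The plan is to establish $(-\pi,\pi)\in Ext_{\mathcal{G}}$ by \emph{Monotone Bar Induction}, using the almost-enumeration of $\mathcal{F}:=[-\pi,\pi]\setminus\mathcal{G}$ as the constructive substitute for the classical fact that the perfect kernel of a countable closed set is empty. First I would normalise: fix an almost-enumeration $f$ of $\mathcal{F}$ and replace each $f(n)$ by $\inf\bigl(\pi,\sup(-\pi,f(n))\bigr)$, so that $Ran(f)\subseteq[-\pi,\pi]$ while $f$ still almost-enumerates $\mathcal{F}$ (as $\mathcal{F}\subseteq[-\pi,\pi]$); I would also arrange $f(0)=-\pi$ and $f(1)=\pi$. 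I will lean on two facts. By Lemma \ref{L:ext}, for $-\pi<c<d<\pi$ the statement ``$\mathcal{G}$ is eventually full in $(c,d)$'' is equivalent to $\exists\mathcal{X}\in Ext_{\mathcal{G}}[(c,d)\subseteq\mathcal{X}]$; in particular $(c,d)\subseteq\mathcal{G}$ implies $\mathcal{G}$ is eventually full in $(c,d)$. And there is a \emph{splicing observation}: if $c<m<d$ and $\mathcal{G}$ is eventually full in both $(c,m)$ and $(m,d)$, then it is eventually full in $(c,d)$ --- take the union of the two witnessing extensions and apply the co-derivative operation once to absorb the single point $m$ (cf. Lemma \ref{L:prepext}); more generally, cutting a closed interval at finitely many interior points, eventual fullness of each piece yields eventual fullness of the whole.

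Next I would build an $\mathbb{N}$-branching tree of ``clearing tasks''. Each node $c\in\mathbb{N}^{\ast}$ carries a closed subinterval $I_c\subseteq[-\pi,\pi]$, with $I_{(\;)}=[-\pi,\pi]$ and $\mathit{length}(I_{c\ast(m)})=\tfrac23\mathit{length}(I_c)$ for every child, mimicking the construction of the function $E$ and the surjection $\phi:\mathcal{C}\twoheadrightarrow\mathcal{F}$ of Subsubsection \ref{SSS:hblocated}. At a node $c$ I use that $\mathcal{F}$ is located in order to \emph{decide}, at the probe points of scale $\mathit{length}(I_c)$, whether $\mathcal{F}$ meets the interior of $I_c$ well away from its endpoints, or does not. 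In the second case I put $c$ into the bar $B$, having certified an open $J_c\subseteq\mathcal{G}$ whose complement inside the interior of $I_c$ consists of two short endpoint strips; in the first case I consult $f$ and branch: the child $c\ast(m)$ records ``the located point of $\mathcal{F}$ inside $I_c$ lies within $2^{-\gamma(|c|)}$ of $f(m)$'' together with one of the two sub-intervals of $I_c$ obtained by splitting $I_c$ at (the approximation) $f(m)$, where $\gamma$ is a fixed rapidly growing modulus chosen so that $2^{-\gamma(n)}$ is negligible against the scale $(\tfrac23)^n$; the children corresponding to indices $m$ for which no such localisation is possible are declared (vacuously) barred. Finally I close $B$ under extension, so that $B$ is monotone; this preserves being a bar.

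The heart of the proof --- and the step I expect to be the main obstacle --- is to show that $B$ is a bar in $\mathcal{N}$. Given $\alpha\in\mathcal{N}$ meeting $B$ nowhere, the pending intervals $I_{\overline{\alpha}n}$ form a shrinking nested sequence converging to a point $z=z(\alpha)$; at every level a point of $\mathcal{F}$ was certified well inside $I_{\overline{\alpha}n}$, so $z\in\overline{\mathcal{F}}$, and since $\mathcal{F}$ is the complement of an open set we have $\overline{\mathcal{F}}\subseteq\mathcal{F}$, whence $z\in\mathcal{F}$. On the other hand, because $\alpha$ never entered $B$, the branching must have kept the relevant $f$-values at distance exceeding $2^{-\gamma(n)}$ from $I_{\overline{\alpha}n}$ at the stages where they were examined, and $z\in I_{\overline{\alpha}n}$ for all $n$; arranging the bookkeeping so that this really forces $\forall n[|f(n)-z|>2^{-\gamma(n)}]$ contradicts the very definition of $f$ almost-enumerating $\mathcal{F}$ (apply it to $z$ and to the modulus $\gamma$). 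Hence no such $\alpha$ exists and $B$ is a bar. Making the tree ``honest'' enough that a hypothetical non-barred branch uniformly evades the almost-enumeration in this way is the delicate point, and it is precisely here that the strength of Monotone Bar Induction over the Fan Theorem is needed, because the tree genuinely branches over $\mathbb{N}$ (over which index $f(m)$ pins down the next point of $\mathcal{F}$).

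Finally I would close the induction. Put $C:=\{c\in\mathbb{N}^{\ast}\mid \mathcal{G}\text{ is eventually full in the interior of }I_c\}$; note $C$ is preserved under passing to sub-intervals, so $B\subseteq C$ will survive the monotone closure. We have $B\subseteq C$: at a bar node the interior of $I_c$ equals an open subset of $\mathcal{G}$ together with two short strips, a co-finite-in-$I_c$ situation, so $\mathcal{G}$ is there even swiftly full (by the argument of Theorem \ref{T:cucofinite}), a fortiori eventually full. And $C$ is inductive: if $c\ast(m)\in C$ for every $m$, then the interior of $I_c$ is covered by the interiors of the children's pending intervals together with the finitely many interior split points and the already-certified pieces lying in $\mathcal{G}$, so the splicing observation gives $c\in C$. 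By Monotone Bar Induction, $(\;)\in C$, i.e. $\mathcal{G}$ is eventually full in $(-\pi,\pi)$. Combined with Theorem \ref{T:evfullunique} this re-proves that such a $\mathcal{G}$ guarantees uniqueness, and together with Theorem \ref{T:openevfullalmostenumerable}(ii) it shows that, for open $\mathcal{G}$ with located complement, eventual fullness and almost-enumerability of the complement are equivalent.
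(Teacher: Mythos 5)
Your proposal shares the strategic core of the paper's proof---use locatedness together with the almost-enumeration and apply Monotone Bar Induction---but the tree you build is different, and the argument that your $B$ is a bar has a genuine gap, which you yourself flag as ``the delicate point.'' In your construction the live child $c\ast(m)$ records that the located point of $\mathcal{F}$ inside $I_c$ lies within $2^{-\gamma(|c|)}$ of $f(m)$, and then $I_{c\ast(m)}$ is obtained by splitting $I_c$ \emph{at} (an approximation of) $f(m)$. A hypothetical evading branch therefore produces a limit point $z$ lying at the intersection of intervals whose endpoints were repeatedly placed at, not away from, the $f$-values just examined. There is no bookkeeping that could force $\forall n[|f(n)-z|>2^{-\gamma(n)}]$; if anything $z$ is being driven towards the $f(m)$'s, and nothing precludes $z=_\mathcal{R} f(m)$ for some $m$. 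The inductive step has a related defect: the children $I_{c\ast(m)}$ are obtained by splitting $I_c$ at \emph{different} points $f(m)$, and you never pin down which half goes to which index, so ``the interior of $I_c$ is covered by the children's interiors plus finitely many split points'' is not available---the split points are infinite in number.

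The paper's proof avoids both problems by not attaching intervals to nodes at all. A node $c\in\mathbb{N}^\ast$ is read as a tuple of moduli, and one defines $\mathcal{H}_c:=\{x\in[-\pi,\pi]\mid\forall n<\mathit{length}(c)[|f(n)-x|>\tfrac{1}{2^{c(n)}}]\}$ and $B:=\{c\mid\forall x\in\mathcal{F}\exists n<\mathit{length}(c)[|f(n)-x|\le\tfrac{1}{2^{c(n)}}]\}$, i.e.\ $\mathcal{H}_c\cap\mathcal{F}=\emptyset$. That $B$ is a bar is then proved \emph{directly} rather than by refuting a hypothetical evading branch: for any $\gamma$, the balls $\{x\mid|f(n)-x|<\tfrac{1}{2^{\gamma(n)}}\}$ cover the located set $\mathcal{F}$, and the extended Heine-Borel Theorem of Subsubsection~\ref{SSS:hblocated} gives a finite subcover, so $\overline\gamma(N{+}1)\in B$ for some $N$. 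That is exactly where locatedness enters. One then takes $C:=\{c\mid\exists\mathcal{X}\in Ext_\mathcal{G}[\mathcal{H}_c\subseteq\mathcal{X}]\}$; $B\subseteq C$ because $\mathcal{H}_c=\emptyset$ on $B$, and $C$ is inductive because $\mathcal{H}_c\subseteq(\bigcup_m\mathcal{X}_m)^+$ when $\mathcal{H}_{c\ast(m)}\subseteq\mathcal{X}_m$---the single co-derivative absorbs the one possible exception at $f(\mathit{length}(c))$, which is the correct and usable form of your splicing observation. If you try to make your interval tree ``honest'' you will in effect be forced to re-derive this moduli tree; it is simpler to work with it from the outset.
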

     \begin{proof}
     
     Define $\mathcal{F} := [-\pi, \pi]\setminus \mathcal{G}$ and let
     $f$ be a function from $\mathbb{N}$ to $\mathcal{R}$ that almost-enumerates $\mathcal{F}$. 
      Note: $\forall x \in \mathcal{F} \forall \gamma \exists n[|f(n) - x| \le \frac{1}{2^{\gamma(n)}}]$, that is: $\forall \gamma \forall x \in \mathcal{F} \exists n[|f(n) - x| \le \frac{1}{2^{\gamma(n)}}]$. Using  the extended Heine-Borel Theorem, see Subsubsection \ref{SSS:hblocated}, we conclude: $$\forall \gamma \in \mathcal{N} \exists N\forall x \in \mathcal{F} \exists n\le N[|f(n) - x| \le \frac{1}{2^{\gamma(n)}}].$$
      
      We now let $B$ be the set of all $c$ in $\mathbb{N}^\ast$ such that $\forall x \in \mathcal{F} \exists n< \mathit{length}(c) [|f(n) - x| \le \frac{1}{2^{c(n)}}]$. Note: $B$ is a bar in $\mathcal{N}$ and, clearly, $B$ is monotone.
      
      \smallskip
      For every $c$ in $\mathbb{N}^\ast$, we let $\mathcal{H}_c$ be the set of all $x$ in $[-\pi,\pi]$ such that $\forall n< \mathit{length}(c) [|f(n) - x| > \frac{1}{2^{c(n)}}]$. We let $C$ be the set of all $c$ in $
     \mathbb{N}^\ast$ such that, for some $\mathcal{X}$ in $Ext_\mathcal{G}$, $\mathcal{H}_c \subseteq \mathcal{X}$.
      
      Note: for all $c$ in $\mathbb{N}^\ast$, if $c \in B$, then $\mathcal{H}_c = \emptyset$, and, therefore, $c\in C$. We thus see: $B\subseteq C$.
      
      \smallskip
      Assume: $c\in \mathbb{N}^\ast$ and, for all $m$, $c\ast( m ) \in C$. Find $n:=\mathit{length}(c)$. Find $\mathcal{X}_0, \mathcal{X}_1, \ldots$ in $Ext_\mathcal{G}$ such that, for each $m$, $\mathcal{H}_{c\ast( m )}=\mathcal{H}_c \cap \{x \in [-\pi, \pi]||x-f(n)|>\frac{1}{2^m}\} \subseteq \mathcal{X}_m$. We now prove: $\mathcal{H}_c \subseteq  (\bigcup\limits_{p\in \mathbb{N}} \mathcal{X}_p)^+$.
      
       Assume: $x\in\mathcal{H}_c$. Find $m$ such that $(x-\frac{1}{2^m}, x+\frac{1}{2^m})\subseteq \mathcal{H}_c$ and both: $x-\frac{1}{2^m}\;\#_\mathcal{R}\; f(n)$ and $x+\frac{1}{2^m}\;\#_\mathcal{R}\; f(n)$.  Distinguish two cases. 
       
       \textit{Case (i)}. $f(n)<x-\frac{1}{2^m}$ or  $f(n)>x+\frac{1}{2^m}$. Then $|x-f(n)|>\frac{1}{2^m}$ and $x \in \mathcal{X}_m\subseteq (\bigcup\limits_{p\in \mathbb{N}} \mathcal{X}_p)^+$. 
       
        \textit{Case (ii)}. $x-\frac{1}{2^m}<f(n)<x+\frac{1}{2^m}$. Find $q$ such that  $x-\frac{1}{2^m}<f(n)-\frac{1}{2^q}<f(n) +\frac{1}{2^q}<x+\frac{1}{2^m}$. Note: for each $p>q$, $(x-\frac{1}{2^m}, f(n) -\frac{1}{2^p})\subseteq \mathcal{X}_p$  and  $\bigl(x -\frac{1}{2^m}, f(n)\bigr)= \bigcup\limits_{p>q}(x-\frac{1}{2^m}, f(n) -\frac{1}{2^p}) \subseteq \bigcup\limits_{p \in \mathbb{N}}\mathcal{X}_p$. Also, for each $p>q$, $(f(n) +\frac{1}{2^p}, x+\frac{1}{2^m}) \subseteq \mathcal{X}_p$ and    $\bigl(f(n), x +\frac{1}{2^m}\bigr)=\bigcup\limits_{p>q}(f(n)+\frac{1}{2^p}, x +\frac{1}{2^m}) \subseteq \bigcup\limits_{p \in \mathbb{N}}\mathcal{X}_p$. Conclude: $\bigl(x-\frac{1}{2^m}, f(n)\bigr)\cup\bigl(f(n), x +\frac{1}{2^m}\bigr)\subseteq  \bigcup\limits_{p \in \mathbb{N}}\mathcal{X}_p$ and  $x \in (x-\frac{1}{2^m}, x+\frac{1}{2^m})\subseteq (\bigcup\limits_{p \in \mathbb{N}}\mathcal{X}_p)^+$.
        
         Conclude: $\mathcal{H}_c\subseteq (\bigcup\limits_{p\in \mathbb{N}} \mathcal{X}_p)^+ \in Ext_\mathcal{G}$, and $c \in C$. 
        
       We have shown: for all $c$ in $\mathbb{N}^\ast$,\textit{ if $\forall m[c\ast( m) \in C]$, then $c\in C$}, that is: $C$ is inductive.
      
      \smallskip We thus see: $B$ is a bar in $\mathcal{N}$, $B$ is monotone, $B\subseteq C$ and $C$ is inductive. Using the Principle of Induction on Monotone Bars, we conclude: 
       $(\;) \in C$, so there exists $\mathcal{X}$ in $Ext_\mathcal{G}$ such that $(-\pi,\pi) = \mathcal{H}_{(\;)} \subseteq \mathcal{X}$. Conclude: $(-\pi,\pi)\in Ext_\mathcal{G}$, that is: $\mathcal{G}$ is eventually full. \end{proof}
   It is possible to prove a Theorem that is a little bit stronger than Theorem \ref{T:ccu}. One obtains this Theorem from Theorem \ref{T:ccu} by replacing the conclusion: `\textit{$\mathcal{G}$ is eventually full}'  by the  statement: `\textit{there exists $\mathcal{H}$ in $\mathcal{CBO}_{(-\pi,\pi)}$ such that $\mathcal{H} \subseteq \mathcal{G}$}'.  As every $\mathcal{H}$ in  $\mathcal{CBO}_{(-\pi,\pi)}$ is eventually full, this statement implies that $\mathcal{G}$ itself is eventually full.  
   
   \section{Every co-enumerable subset of $[-\pi,\pi]$ guarantees uniqueness}   A subset $\mathcal{X}$ of $\mathcal{R}$ is \textit{co-enumerable} if and only if there exists a function $f$ from $\mathbb{N}$ to $\mathcal{R}$ such that,  for all $x$ in $\mathcal{R}$, if $\forall n[x\;\#_\mathcal{R}\; f(n)]$, then $x \in \mathcal{X}$. \subsection{An intuitionistic proof of an extended  Cantor-Schwarz-Lemma}We first prove two preliminary Lemmas. 
   
   \begin{lemma}\label{L:helpcsby} Let $a,b$ in $\mathcal{R}$ be given such that $a<b$ and let $H$ be a function from $[a,b]$ to $\mathcal{R}$. Assume: $a<z<b$ and $\forall y\in[a,b][H(y) \le H(z)]$, that is, $H$ assumes its greatest value at $z$. Also assume: $D^1H(z)=0$. Then $H$ is differentiable at $z$ and $H'(z) =0$. \end{lemma}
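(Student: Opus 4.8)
The plan is to recover the ordinary one-sided difference quotients of $H$ at $z$ from the symmetric quotient appearing in $D^1H(z)$, using crucially that $H$ attains its greatest value at $z$. Since $a<z<b$, I would first fix $h_0>0$ with $(z-h_0,z+h_0)\subseteq(a,b)$, and work only with $h$ satisfying $0<h<h_0$, so that $z+h$ and $z-h$ both lie in $[a,b]$. The hypothesis $\forall y\in[a,b][H(y)\le H(z)]$ then gives, for all such $h$, the two inequalities $H(z+h)-H(z)\le 0$ and $H(z-h)-H(z)\le 0$.

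Next, let $\varepsilon>0$. From $D^1H(z)=\lim_{h\rightarrow 0}\frac{H(z+h)+H(z-h)-2H(z)}{h}=0$ I would find $\delta$ with $0<\delta\le h_0$ such that $|H(z+h)+H(z-h)-2H(z)|<\varepsilon h$ for all $h$ with $0<h<\delta$; in particular $\bigl(H(z+h)-H(z)\bigr)+\bigl(H(z-h)-H(z)\bigr)>-\varepsilon h$. Since the second summand is $\le 0$, the first already satisfies $H(z+h)-H(z)>-\varepsilon h$, and combined with $H(z+h)-H(z)\le 0$ this yields $-\varepsilon<\frac{H(z+h)-H(z)}{h}\le 0$. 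Symmetrically $-\varepsilon<\frac{H(z-h)-H(z)}{h}\le 0$, i.e. $0\le\frac{H(z-h)-H(z)}{-h}<\varepsilon$. As $\varepsilon>0$ was arbitrary, this shows $\lim_{k\rightarrow 0}\frac{H(z+k)-H(z)}{k}=0$, i.e. $H$ is differentiable at $z$ with $H'(z)=0$, which is the claim.

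The argument is entirely elementary, using neither the Fan Theorem nor Heine-Borel nor any decidability; the one point deserving care is the constructive passage from the symmetric estimate to a one-sided one, namely the inference: from $P+Q>-\varepsilon h$ and $Q\le 0$, conclude $P>-\varepsilon h$. This I would justify as follows: $Q\le 0$ gives $P+Q\le P$, i.e. $\neg(P<_\mathcal{R} P+Q)$; applying co-transitivity of $<_\mathcal{R}$ to $-\varepsilon h<_\mathcal{R} P+Q$ with intermediate point $P$ gives $-\varepsilon h<_\mathcal{R} P\;\vee\;P<_\mathcal{R} P+Q$, and the second disjunct is excluded, so $-\varepsilon h<_\mathcal{R} P$. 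Hence the only ``obstacle'' is this routine bookkeeping with the apartness-based order relations.
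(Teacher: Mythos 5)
Your proof is correct and follows essentially the same route as the paper's: the paper writes the single chain of inequalities $2H(z)-H(z+h)-H(z-h)=\bigl(H(z)-H(z+h)\bigr)+\bigl(H(z)-H(z-h)\bigr)\ge H(z)-H(z+h)\ge 0$ and then divides by $h$, squeezing the one-sided quotient between $0$ and the symmetric quotient, which is exactly your argument written out with explicit $\varepsilon$--$\delta$ bookkeeping and an explicit appeal to co-transitivity for the step $x<y\le z\Rightarrow x<z$.
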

   
   \begin{proof}
     Note:  $$D^1H(z)= \lim_{h \rightarrow 0} \frac{H(z+h) + H(z-h)-2H(z)}{h} .$$ 
  
  As $H$ assumes its greatest value at $z$,   for every $h$,  $2H(z) - H(z+h)-H(z-h) = \bigl(H(z)-H(z+h)\bigr) + \bigl(H(z)-H(z-h)\bigr)\ge H(z)-H(z+h)\ge 0.$
  
   Conclude: $\lim_{h \rightarrow 0} \frac{H(z+h) -H(z)}{h} =0$, that is, $H$ is differentiable at $z$ and \\$H'(z)=0$.
    \end{proof}
    \begin{lemma}\label{L:helphelpcsby} Let $a,b$ in $\mathcal{R}$ be given such that $a<b$ and let $G$ be a continuous function from $[a,b]$ to $\mathcal{R}$. \begin{enumerate}[\upshape (i)] \item For all $y,z$ in $[a,b]$, if $G(y) \;\#_\mathcal{R} \;G(z)$, then $y\;\#_\mathcal{R}\; z$. 
    \item For all $y,z$ in $[a,b]$, if $G$ is differentiable at both $y$ and $z$ and $G'(y)\;\#_\mathcal{R}\;G'(z)$, then $y\;\#_\mathcal{R} \;z$. \end{enumerate}  \end{lemma}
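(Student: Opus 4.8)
The plan is to derive both parts from the continuity of $G$ together with the co-transitivity of $<_\mathcal{R}$, obtaining (ii) by reduction to (i). Note first that the hypotheses of both parts already grant that $G$ is a continuous function on $[a,b]$, so (i) is available when proving (ii).

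For (i), the argument is the familiar "strong extensionality" of continuous maps. Suppose $G(y)\;\#_\mathcal{R}\;G(z)$ and put $\varepsilon:=|G(y)-G(z)|$, so $\varepsilon>0$. Continuity of $G$ at $y$ yields $\delta>0$ with $\forall w\in[a,b][|w-y|<\delta\rightarrow|G(w)-G(y)|<\varepsilon]$. Taking $w=z$ shows that $|y-z|<\delta$ is impossible, hence $\delta\le_\mathcal{R}|y-z|$; since $0<\delta$ we conclude $0<|y-z|$, that is, $y\;\#_\mathcal{R}\;z$.

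For (ii), assume $G$ is differentiable at $y$ and at $z$ and, without loss of generality (the two apartness cases being symmetric under $y\leftrightarrow z$), that $G'(y)<G'(z)$; set $c:=G'(z)-G'(y)>0$. The first main step is to produce a \emph{single} increment $h\neq 0$ with $y+h\in[a,b]$ and $z+h\in[a,b]$ for which simultaneously $\bigl|\tfrac{G(y+h)-G(y)}{h}-G'(y)\bigr|<\tfrac{c}{3}$ and $\bigl|\tfrac{G(z+h)-G(z)}{h}-G'(z)\bigr|<\tfrac{c}{3}$; this is possible by the definition of the derivative at each point. Subtracting the two difference quotients gives, when $h>0$, $\bigl(G(z+h)-G(y+h)\bigr)+\bigl(G(y)-G(z)\bigr)>\tfrac{hc}{3}>0$, and, when $h<0$, the analogous inequality $\bigl(G(z)-G(y)\bigr)+\bigl(G(y+h)-G(z+h)\bigr)>0$. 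Since a positive sum of two reals has a positive summand (co-transitivity of $<_\mathcal{R}$), in the case $h>0$ either $G(z+h)>G(y+h)$, whence $G(z+h)\;\#_\mathcal{R}\;G(y+h)$ and, by (i), $z+h\;\#_\mathcal{R}\;y+h$, so $z\;\#_\mathcal{R}\;y$; or $G(y)>G(z)$, whence by (i) $y\;\#_\mathcal{R}\;z$. The case $h<0$ is handled the same way. Either way $y\;\#_\mathcal{R}\;z$.

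The step I expect to be the main obstacle is the selection of one common admissible increment $h$. If $y,z\in(a,b)$ this is routine: from $a<y<b$ and $a<z<b$ one reads off an explicit $\eta>0$ with $[y-\eta,y+\eta]\cup[z-\eta,z+\eta]\subseteq[a,b]$, and any $h$ with $0<|h|<\eta$ serves. For the statement exactly as phrased, with $y,z\in[a,b]$, I would first decide, by co-transitivity of $<_\mathcal{R}$, either $|y-z|>\tfrac{b-a}{4}$ — in which case $y\;\#_\mathcal{R}\;z$ immediately and we are done — or $|y-z|<\tfrac{b-a}{2}$. In the latter case the set of increments admissible for both points is the interval $[a-\min(y,z),\,b-\max(y,z)]$, which contains $0$ and has length $(b-a)-|y-z|>\tfrac{b-a}{2}$; a further decision ("go up" versus "go down") locates a small suitable $h$ on whichever side has room, and since at most one of $y,z$ can then coincide with an endpoint, possible one-sidedness of the derivative causes no trouble. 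Apart from this bookkeeping the proof is elementary.
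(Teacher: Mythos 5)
Your proof is correct and follows essentially the same route as the paper's: part (i) via continuity at $y$, and part (ii) via a single common increment $h$ at which both difference quotients approximate the respective derivatives within $\varepsilon/3$, followed by co-transitivity of $<_\mathcal{R}$ and an appeal to (i). Your attention to picking an $h$ admissible for both $y$ and $z$ when one of them may be an endpoint of $[a,b]$ is in fact slightly more careful than the paper's proof, which simply writes ``find $h>0$'' without addressing that point.
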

    
    \begin{proof} (i) Assume: $G(y) \;\#_\mathcal{R}\;G(z)$ and define: $\varepsilon=|G(y) - G(z)|$. Use the fact that $G$ is continuous at $y$ and find $\delta$ such that $\forall v \in[a,b][|y-v|<\delta \rightarrow |G(y) - G(v)|<\varepsilon]$. Conclude: $|y-z|\ge \delta$, and: $y\;\#_\mathcal{R} \; z$. 
    
    \smallskip (ii) Assume:  $G$ is differentiable at both $y$ and $z$ and $G'(y)\;\#_\mathcal{R}\;G'(z)$. Define: $\varepsilon=|G'(y) - G'(z)|$. Find $h>0$ such that both $|G'(y)-\frac{G(y+h)-G(y)}{h}| <\frac{\varepsilon}{3}$ and  $|G'(z)-\frac{G(z+h)-G(z)}{h}| <\frac{\varepsilon}{3}$. Conclude: $|\frac{G(y+h)-G(y)}{h}-\frac{G(z+h)-G(z)}{h}|>\frac{\varepsilon}{3}$ and $|\bigl(G(y+h)-G(y)\bigr)-\bigl(G(z+h)-G(z)\bigr)|>\frac{\varepsilon}{3}h$ and \textit{either}: $|G(y+h)-G(z+h)|>\frac{\varepsilon}{6}h$, and therefore, by (i), $y+h \;\#_\mathcal{R}\;z+h$ and thus $y \;\#_\mathcal{R}\;z$, \textit{or}: $|G(y) - G(z)|>\frac{\varepsilon}{6}h$ and, again by (i), $y \;\#_\mathcal{R}\;z$.  \end{proof}
   \begin{lemma}[Cantor-Schwarz-Bernstein-Young]\label{L:csby}Let $a<b$ be given and let $f$ be a function from $\mathbb{N}$ to $\mathcal{R}$.  Let $\mathcal{X}$ be the set of all $x$ in $(a, b)$ such that, for all $n$, $x \;\#_\mathcal{R}\;f(n)$. Let  $G$ be a function from $[a,b]$ to $\mathcal{R}$ such that $G(a) = G(b) =0$ and,  for all $x$ in $\mathcal{X}$, $D^2G(x)$ exists. \begin{enumerate}[\upshape (i)] \item For each $\varepsilon >0$, if $\exists x \in [a,b][G(x)=\varepsilon]$, then $\exists x \in
      \mathcal{X}[D^2G(x) \le -2\varepsilon]$, and,  \item if $\forall x \in \mathcal{X}[D^2G(x) = 0]$, then $\forall x \in [a,b][G(x) = 0]$. \end{enumerate}\end{lemma}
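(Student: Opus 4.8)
The plan is to deduce (ii) from (i) by the same device used for Lemma \ref{L:cs}, and to prove (i) by adapting the trisection construction of the first proof of Lemma \ref{L:cs} to the present situation, where $D^2G$ is only known to exist off the set $\{f(n)\mid n\in\mathbb{N}\}$. The preliminary Lemmas \ref{L:helpcsby} and \ref{L:helphelpcsby} are tailored to the new complication: the maximum of the auxiliary function may sit on one of the exceptional points $f(n)$.

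To get (ii) from (i), suppose $x_1\in[a,b]$ and $G(x_1)\;\#_\mathcal{R}\;0$, so that $G(x_1)>0$ or $G(x_1)<0$. In the first case, (i) applied with $\varepsilon:=G(x_1)$ and $x:=x_1$ produces an $x\in\mathcal{X}$ with $D^2G(x)\le-2\varepsilon<0$; in the second case, since $D^2(-G)(x)=-D^2G(x)$ exists for every $x\in\mathcal{X}$, (i) applied to $-G$ with $\varepsilon:=-G(x_1)$ produces an $x\in\mathcal{X}$ with $D^2G(x)\ge2\varepsilon>0$. Either outcome contradicts the hypothesis $\forall x\in\mathcal{X}[D^2G(x)=0]$, so $\neg\exists x\in[a,b][G(x)\;\#_\mathcal{R}\;0]$, that is, $\forall x\in[a,b][G(x)=0]$, the relation $=_\mathcal{R}$ being the negation of $\;\#_\mathcal{R}$.

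For (i), assume $x_0\in[a,b]$ with $G(x_0)=\varepsilon>0$, and set $H(y):=G(y)-\varepsilon\frac{(b-y)(y-a)}{(b-a)^2}$ and, for $\rho\in[0,\frac{1}{2}\varepsilon]$, $H_\rho(y):=H(y)+\rho\frac{y-a}{b-a}$. Exactly as in Lemma \ref{L:cs}: $H_\rho(a)=0$, $H_\rho(b)=\rho$, $H_\rho(x_0)\ge\frac{3}{4}\varepsilon$ exceeds both, and for every $x\in\mathcal{X}$, $D^2H_\rho(x)=D^2G(x)+2\varepsilon$ (the modifying terms are polynomials, so they change neither the existence of $D^2$ off $\{f(n)\mid n\in\mathbb{N}\}$ nor the value of the first symmetric derivative). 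Now run the simultaneous trisection of Lemma \ref{L:cs}: using the unrestricted Fan Theorem to form the suprema of the $H_\rho$ over subintervals, build nested $y$-intervals $[a_n,b_n]$ (shrinking by the factor $\frac{2}{3}$) and $\rho$-intervals $[c_n,d_n]$ and margins $\delta_n>0$ so that, for every $\rho\in[c_{n+1},d_{n+1}]$, the supremum of $H_\rho$ over $[a_{n+1},b_{n+1}]$ equals its supremum over $[a,b]$, while $H_\rho(y)+\delta_n\le\sup_{[a_{n+1},b_{n+1}]}H_\rho$ whenever $y\notin[a_{n+1},b_{n+1}]$. By the Cantor Intersection Theorem there are $\rho$ and $z$ with $c_n\le\rho\le d_n$ and $a_n\le z\le b_n$ for all $n$; then $H_\rho$ attains its greatest value at $z$, and, as $H_\rho(z)\;\#_\mathcal{R}\;H_\rho(a)$ and $H_\rho(z)\;\#_\mathcal{R}\;H_\rho(b)$, Lemma \ref{L:helphelpcsby}(i) yields $a<z<b$. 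If $z\in\mathcal{X}$ we are through: $D^2H_\rho(z)$ exists, Lemma \ref{L:basic}(ii) gives $D^2H_\rho(z)\le0$, and hence $D^2G(z)=D^2H_\rho(z)-2\varepsilon\le-2\varepsilon$. In general $z$ might be one of the $f(n)$; in that case, using that $D^1G(x)=0$ for all $x\in(a,b)$ one has $D^1H_\rho(z)=0$, so Lemma \ref{L:helpcsby} shows $H_\rho$ is differentiable at $z$ with $H_\rho'(z)=0$. From this, together with the peak estimates $H_\rho(y)+\delta_n\le H_\rho(z)$ for $y$ outside $[a_{n+1},b_{n+1}]$, one shows — by a chord argument in the spirit of Corollary \ref{cor:cs} — that $D^2H_\rho$ cannot be $>0$ throughout a punctured neighbourhood of $z$ (otherwise $H_\rho$ would be convex on each side of $z$, and convexity with $H_\rho'(z)=0$ and $H_\rho(y)\le H_\rho(z)$ would force $H_\rho$ constant near $z$, contradicting strictness of the peak); hence arbitrarily near $z$ there is a point of $\mathcal{X}$ at which the second symmetric difference quotient of $H_\rho$ tends to a value $\le0$, and there $D^2G\le-2\varepsilon$.

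I expect the main obstacle to be producing that point of $\mathcal{X}$ \emph{constructively}, with no appeal to countable choice — the argument just sketched only shows that it cannot fail to exist. To obtain it one has to steer the trisection at stage $n$, using the freedom already present (the non-exclusive two-case alternative, and $O(\delta_n)$-perturbations of $\rho$, which move the location of the maximum of $H_\rho$), so as either to force the limit point $z$ into $\mathcal{X}$ or, when $f(n)$ lies too close to the running maximum to be dodged, to pin down the wanted point of $\mathcal{X}$ on the ascent towards the apex — all the while preserving the supremum-tracking invariants and the geometric decrease of $b_n-a_n$ and $d_n-c_n$; Lemmas \ref{L:helpcsby} and \ref{L:helphelpcsby} are the instruments that keep this bookkeeping honest when $f(n)$ is dangerously near the peak. (Part (ii) can alternatively be obtained, as Lemma \ref{L:cs2} was, from the negative substitute Theorem \ref{T:weierstr} for Weierstrass's theorem, at the price of not recovering the quantitative statement (i).)
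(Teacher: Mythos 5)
Your setup is the same as the paper's: the auxiliary functions $H$ and $H_\rho$, the use of the Fan Theorem to form suprema, the simultaneous refinement of a $\rho$-interval and a $z$-interval, and the deduction of (ii) from (i). The preliminary Lemmas \ref{L:helpcsby} and \ref{L:helphelpcsby} are exactly the tools needed. But the core difficulty you identify at the end — constructively producing a peak located \emph{in} $\mathcal{X}$ — is left unresolved, and your sketch of how to resolve it (``steer the trisection'', plus a chord argument near an exceptional $z$) is not the paper's mechanism and, as you concede, only establishes non-impossibility rather than constructing the point.

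The idea you are missing is to replace the single nested sequence by a \emph{full binary tree} of nested intervals. At each stage, after trisecting the $z$-interval, one chooses \emph{two disjoint} subintervals of the current $\rho$-interval (each at most half as long, preserving the supremum-tracking invariant), and recurses on both. This yields maps $D, F : Bin \to \mathcal{R}^2$ and thence functions $\phi: \mathcal{C}\to[a,b]$ and $\psi: \mathcal{C}\to[0,\frac12\varepsilon]$, with $\psi$ strongly injective by construction, and $H_{\psi(\alpha)}$ attaining its greatest value at $\phi(\alpha)$ for each $\alpha$. Lemma \ref{L:helpcsby} (interior maximum plus $D^1=0$) then shows $H_{\psi(\alpha)}$ is differentiable at $\phi(\alpha)$ with derivative $0$, so $G'(\phi(\alpha))=-\psi(\alpha)/(b-a)$; Lemma \ref{L:helphelpcsby}(ii) transfers the apartness of the $\psi$-values to the $\phi$-values, so $\phi$ is strongly injective. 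Strong injectivity of $\phi$ on $\mathcal{C}$ makes the set of peak locations \emph{positively uncountable}: given the enumeration $f$, one constructs $\alpha\in\mathcal{C}$ step by step, at stage $n$ selecting the branch (of the two on offer) whose $D$-interval at a sufficiently deep level is apart from $f(n)$ — one of the two must be, by co-transitivity of $<_\mathcal{R}$, since the two subtrees have apart peak locations. The resulting $z=\phi(\alpha)$ then satisfies $\forall n[z\;\#_\mathcal{R}\;f(n)]$, so $z\in\mathcal{X}$, and the computation $D^2H_\rho(z)\le 0$, $D^2G(z)\le-2\varepsilon$ goes through. This is not a local dodging of $f(n)$ near one peak, but a global diagonalization over a continuum of peaks, and it is the step your proposal does not supply.
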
\begin{proof}(i) Assume we find $x$ in $[a,b]$ such that $G(x) =\varepsilon>0$. Define a function $H$ from $[a,b]$ to $\mathcal{R}$ such that, for all $y$ in $[a,b]$, $H(y)= G(y) -\varepsilon\frac{(b-y)(y-a)}{(b-a)^2}$. Note: $H(x) \ge \frac{3}{4}\varepsilon$ and, for all $y$ in $\mathcal{X}$, $D^2H(y)$ exists and $D^2H(y) = D^2G(y)+ 2\varepsilon$.
      
       For each real number $\rho$ we 
   define a function $H_\rho$ from $[a,b]$ to $\mathcal{R}$ such that, for all $y$ in $[a,b]$, $H_\rho(y) = H(y) + \rho \frac{y-a}{b-a}$.  Note: for each $\rho$, for all $y$ in $\mathcal{X}$, $D^2H_\rho(y)$ exists and $D^2H_\rho(y) = D^2H(y) = D^2G(y)+2\varepsilon$. Also note: for all $\rho$ in $[0, \frac{1}{2}\varepsilon]$, $H_\rho(x) > H_\rho(a)$ and $H_\rho(x) > H_\rho(b)$.  
   
   We now have to re-read the proof of Lemma \ref{L:cs}. The basic step in the inductive construction there was the following:  
   
   \begin{quote}
   Let $(v,w), (c,d)$ be  pairs of reals such that $v<w$ and $c<d$.  Define $z_0 := \frac{2}{3}v +\frac{1}{3}w$ and  $z_1:= \frac{1}{3}v +\frac{2}{3}w$. Then there exist $c_0, d_0$ such that $c\le c_0<d_0\le d$ and \textit{either}: $\forall \rho \in [c_0, d_0][\sup_{[v, z_1]}H_\rho = 
   \sup_{[v, w]}H_\rho]$ \textit{or}: $\forall \rho \in [c_0, d_0][\sup_{[z_0, w]}H_\rho = 
   \sup_{[v, w]}H_\rho]$. 
   \end{quote}
 A slight modification of the argument (we choose two disjoint intervals within the interval $(c_0, d_0)$) leads to  the following conclusion: 
   \begin{quote} Let $(v,w), (c,d)$ be  pairs of reals such that $v<w$ and $c<d$.  Define $z_0 := \frac{2}{3}v +\frac{1}{3}w$ and  $z_1:= \frac{1}{3}v +\frac{2}{3}w$. Then there exist $c_0, d_0, c_1,d_1$ such that $c\le c_0<d_0<c_1<d_1\le d$ and $\forall i<2[d_i-c_i<\frac{1}{2}(d-c)]$ and \textit{either} (case (a)): $\forall i<2\forall \rho \in [c_i, d_i][\sup_{[v, z_1]}H_\rho = 
   \sup_{[v, w]}H_\rho]$ \textit{or} (case (b)): $\forall i<2\forall \rho \in [c_i, d_i][\sup_{[z_0, w]}H_\rho = 
   \sup_{[v, w]}H_\rho]$. 
   \end{quote}
   
   We may use this to define two functions, called $D, F$ from the set $Bin$ of the finite binary sequences to the set of the pairs of real numbers.
   
   \begin{enumerate}[\upshape 1.] \item Define $D\bigl(( \;)\bigr) := (a,b)$ and $F\bigl((\;)\bigr):=(0,\frac{1}{2}\varepsilon)$ \item Assume $s \in Bin$ and $D(s), F(s)$ have been defined. Find  $v,w,c,d$  such that $D(s) = (v,w)$ and $F(s) =(c,d)$.  Apply the above construction and define: $F(s\ast( 0)) = (c_0, d_0)$ and $F(s\ast( 1)) = (c_1, d_1)$, and, in case (a), $D(s\ast( 0))= D(s\ast( 1))= (v, z_1)$, and, in case (b), $D(s\ast( 0))=D(s\ast( 1)) = (z_0, w)$. \end{enumerate}
   Let us write, for each $s$ in $Bin$, $D(s)=\bigl(D_0(s), D_1(s)\bigr)$ and $F(s)=\bigl(F_0(s), F_1(s)\bigr)$.
   
   Define  functions $\phi$, $\psi$ from Cantor space $\mathcal{C}$ to   $[a,b]$ and $ [0, \frac{1}{2}\varepsilon]$, respectively, such that, for all $\alpha$, for all $n$, $D_0(\overline \alpha n)\le\phi(\alpha) \le D_1(\overline \alpha n)$ and $F_0(\overline \alpha n)\le\psi(\alpha) \le F_1(\overline \alpha n)$.
   
   The following two conclusions should be clear:
     \begin{enumerate}[\upshape (1)] \item for all $\alpha, \beta$ in $\mathcal{C}$, if $\alpha \;\#\;\beta$, then $\psi(\alpha) \;\#_\mathcal{R}\;\psi(\beta)$, that is: \textit{the function $\psi$ is strongly injective}, and,  \item for all $\alpha $ in $\mathcal{C}$, for all $y$ in $[a,b]$, $H_{\psi(\alpha)}(y) \le H_{\psi(\alpha)}\bigl(\phi(\alpha)\bigr)$, that is: the function  $H_{\psi(\alpha)}$ assumes its greatest value at $\phi(\alpha)$. \end{enumerate}
  
  We now prove: \textit{the function $\phi$ is strongly injective}. 
  
  By Riemann's second result,   $0=D^1G\bigl(\phi(\alpha)\bigr)=D^1H_{\psi(\alpha)}\bigl(\phi(\alpha)\bigr)$.
  Also,  the function $H_{\psi(\alpha)}$ assumes its greatest value at $\phi(\alpha)$. Using Lemma \ref{L:helpcsby}, we conclude: the function $H_{\psi(\alpha)}$ is differentiable at $\phi(\alpha)$ and $(H_{\psi(\alpha)})'\bigl(\phi(\alpha)\bigr)=0$. It follows that $G$ itself is differentiable at $\phi(\alpha)$ and that $G'\bigl(\phi(\alpha)\bigr) = -\frac{\psi(\alpha)}{b-a}$.

    For all $\alpha, \beta$ in $\mathcal{C}$, if $\alpha \;\#\;\beta$, then $\psi(\alpha) \;\#_\mathcal{R}\; \psi(\beta)$, so $G'\bigl(\phi(\alpha)\bigr) \;\#_\mathcal{R}\;G'\bigl(\phi(\beta)\bigr)$ and, therefore, by Lemma \ref{L:helphelpcsby}(ii): $\phi(\alpha) \;\#_\mathcal{R}\; \phi(\beta)$.
    
    \smallskip
    It follows that the set $\{\phi(\alpha)|\alpha \in \mathcal{C}\}$ is \textit{positively uncountable} in the following sense: given any function $g$ from $\mathbb{N}$ to $\mathcal{R}$, one may build $\alpha$ in $\mathcal{C}$ such that $\forall n[g(n) \;\#_\mathcal{R}\; \phi(\alpha)]$.
    
     We  do so for the function $f$ from $\mathbb{N}$ to $\mathcal{R}$ that occurs in the data of our Theorem. 
     
      We  build the promised $\alpha $ in $\mathcal{C}$ step by step. Together with $\alpha$ we construct a strictly increasing element $\zeta$ of $\mathcal{N}$ and we will take care that, for each $n$, either $f(n) <_\mathcal{R} D_0\bigl(\overline \alpha\zeta(n)\bigr)$ or $D_1\bigl(\overline \alpha \zeta(n)\bigr)<_\mathcal{R} f(n)$. 
     
     We define $\zeta(0) = 0$. Now let $n$ be given and assume we constructed $\overline \alpha \zeta(n)$ successfully. We define $\beta :=\overline\alpha \zeta(n)\ast\underline 0$ and $\gamma:=\overline \alpha \zeta(n) \ast\underline 1$. Note: $\beta\;\#\;\gamma$ and, therefore: $\phi(\beta) \;\#_\mathcal{R}\;\phi(\gamma)$. Find $p$ such that either $D_1(\overline \beta p)<_\mathcal{R} D_0(\overline \gamma p)$ or $D_1(\overline \gamma p)<_\mathcal{R} D_0(\overline \beta p)$. 
 Now distinguish two cases.
     
     \textit{Case (i)}: $D_1(\overline \beta p)<_\mathcal{R} D_0(\overline \gamma p)$. Then either: $f(n) <_\mathcal{R} D_0(\overline \gamma p)$ or $D_1(\overline \beta p)<f(n)$. In case we first find out: $f(n) <_\mathcal{R} D_0(\overline \gamma p)$, we define: $\zeta(n+1): =p$ and $\overline \alpha \zeta(n+1) = \overline \gamma p$, and in case we first find out: $D_1(\overline \beta p)<f(n)$, we define: $\zeta(n+1): =p$ and $\overline \alpha \zeta(n+1) = \overline \beta p$.
     
     \textit{Case (ii)}: $D_1(\overline \gamma p)<_\mathcal{R} D_0(\overline \beta p)$.  This case is handled similarly. (Interchange the r\^oles of $\beta$ and $\gamma$.) 
   
   This completes the description of the construction of $\alpha$. 
    Define $x :=\phi(\alpha)$.
    
    Note: $\forall n[x \;\#_\mathcal{R} \;f(n)]$. Therefore: $x \in \mathcal{X}$ and  $D^2G(x)$ exists. Conclude: also  $D^2H_{\psi(\alpha)}(x)$ exists and $D^2H_{\psi(\alpha)}(x)= D^2G(x) +2\varepsilon$. But,  $H_{\psi(\alpha)}$ assumes its greatest value at $x$ and thus $D^2H_{\psi(\alpha)}(x)\le 0$. Therefore: $D^2G(x) \le -2\varepsilon$.
    
    \smallskip
   (ii) follows from (i), as in the proof of Lemma \ref{L:cs}. \end{proof}
    \begin{corollary}\label{cor:csby} Let $a,b$ be real numbers such that $a<b$  and let $G$ be a function from $[a,b]$ to $\mathcal{R}$. Let $\mathcal{X}$ be a co-enumerable subset of $[a,b]$ such that   for all $x$ in $\mathcal{X}$, $D^2G(x)=0$. Then $G$ is linear on $[a,b]$, that is: for all $x$ in $[a,b]$, $G(x)=G(a) +\frac{x-a}{b-a}\bigl(G(b)-G(a)\bigr)$. \end{corollary}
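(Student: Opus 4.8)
The plan is to imitate the deduction of Corollary \ref{cor:cs} from Lemma \ref{L:cs}(ii), now invoking the extended Cantor-Schwarz-Bernstein-Young Lemma \ref{L:csby}(ii) in its place.

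First I would reduce to the case $G(a)=G(b)=0$. Define, for each $x$ in $[a,b]$,
$$G^\ast(x):= G(x)-G(a) -\frac{x-a}{b-a}\bigl(G(b)-G(a)\bigr),$$
so that $G^\ast(a)=G^\ast(b)=0$ and $G^\ast$ differs from $G$ by an affine function $\ell$. Since $\ell''=0$, the observation quoted in Section \ref{S:riemann1} gives $D^2\ell(x)=0$ for every $x$ in $(a,b)$; hence, by the additivity of $D^2$ also noted there, at every $x$ in $(a,b)$ where $D^2G(x)$ exists, $D^2G^\ast(x)$ exists as well and $D^2G^\ast(x)=D^2G(x)$.

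Next I would bring the hypothesis into the exact shape demanded by Lemma \ref{L:csby}. Because $\mathcal{X}$ is co-enumerable, there is a function $f$ from $\mathbb{N}$ to $\mathcal{R}$ such that every $x$ in $\mathcal{R}$ with $\forall n[x\;\#_\mathcal{R}\;f(n)]$ belongs to $\mathcal{X}$. Let $\widetilde{\mathcal{X}}$ be the set of all $x$ in $(a,b)$ such that $\forall n[x\;\#_\mathcal{R}\;f(n)]$; then $\widetilde{\mathcal{X}}\subseteq \mathcal{X}$, so by assumption $D^2G(x)=0$, and therefore $D^2G^\ast(x)=0$, for every $x$ in $\widetilde{\mathcal{X}}$. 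Applying Lemma \ref{L:csby}(ii) to $a$, $b$, the function $f$, the set $\widetilde{\mathcal{X}}$ and the function $G^\ast$, we obtain $\forall x \in [a,b][G^\ast(x)=0]$, that is: $G(x)=G(a) +\frac{x-a}{b-a}\bigl(G(b)-G(a)\bigr)$ for all $x$ in $[a,b]$, so $G$ is linear on $[a,b]$.

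I expect no genuine obstacle here; the only points requiring a word of care are the passage from an arbitrary co-enumerability witness $\mathcal{X}$ to the concretely described set $\widetilde{\mathcal{X}}$ on which Lemma \ref{L:csby} is phrased, and the routine check that subtracting an affine function leaves the second symmetric derivative undisturbed.
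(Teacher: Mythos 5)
Your proof is correct and takes essentially the same approach as the paper: define $G^\ast$ by subtracting the affine interpolant and apply Lemma \ref{L:csby}(ii) to conclude $G^\ast \equiv 0$. The paper's own proof is terser, omitting the verification that $D^2G^\ast = D^2G$ and the reduction from the co-enumerable set $\mathcal{X}$ to the set $\widetilde{\mathcal{X}}$ in the exact form demanded by the lemma; you supply those routine details explicitly, which is a sound elaboration rather than a different argument.
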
 
  
   \begin{proof} Define, for each $x$ in $[a,b]$:  $$G^\ast(x):= G(x)-G(a) -\frac{x-a}{b-a}\bigl(G(b)-G(a)\bigr)$$ and conclude, using Lemma \ref{L:csby}(ii): for all $x$ in $[a,b]$, $G^\ast(x) = 0$. \end{proof}  
    \subsection{A second proof} 
 It seems to us this second proof is of interest although it does not give the constructive information of Lemma \ref{L:csby}(i). Lemma \ref{L:csby2} stands to Lemma   \ref{L:csby} as Lemma \ref{L:cs2} stands to Lemma \ref{L:cs}. 
    \begin{lemma}[Cantor-Schwarz-Bernstein-Young, version II]\label{L:csby2}Let $a<b$ be given and let $f$ be a function from $\mathbb{N}$ to $\mathcal{R}$.  Let $\mathcal{X}$ be the set of all $x$ in $(a, b)$ such that, for all $n$, $x \;\#_\mathcal{R}\;f(n)$. Let  $G$ be a function from $[a,b]$ to $\mathcal{R}$ such that $G(a) = G(b) =0$ and,  for all $x$ in $\mathcal{X}$, $D^2G(x)$ exists. If $\forall x \in \mathcal{X}[D^2G(x) = 0]$, then $\forall x \in [a,b][G(x) = 0]$. \end{lemma}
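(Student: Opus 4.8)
The plan is to imitate the proof of Lemma~\ref{L:cs2}, using Theorem~\ref{T:weierstr} in place of the explicit construction that dominated the first proof of Lemma~\ref{L:csby}. It suffices to prove $\neg\exists x\in[a,b][G(x)>0]$: running the same argument for $-G$ and combining yields $\neg\exists x\in[a,b][G(x)\;\#_\mathcal{R}\;0]$, that is $\forall x\in[a,b][G(x)=0]$, since $=_\mathcal{R}$ is a negative relation. So assume $x_0\in[a,b]$ with $G(x_0)=\varepsilon>0$, and set $H(y):=G(y)-\varepsilon\frac{(b-y)(y-a)}{(b-a)^2}$ and, for $\rho\in[0,\frac{1}{2}\varepsilon]$, $H_\rho(y):=H(y)+\rho\frac{y-a}{b-a}$. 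Then $H$ and each $H_\rho$ is continuous, $H(a)=H(b)=0$, $H(x_0)\ge\frac{3}{4}\varepsilon$, $H_\rho(x_0)>\max\bigl(H_\rho(a),H_\rho(b)\bigr)$; for every $y\in\mathcal{X}$ the number $D^2H_\rho(y)$ exists and equals $D^2G(y)+2\varepsilon=2\varepsilon>0$; and $D^1H(y)=0$ for every $y\in(a,b)$, because the polynomial correction term has a second derivative there and $D^1G(y)=0$ by Riemann's second result.

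The goal is to contradict Theorem~\ref{T:weierstr} by establishing $\forall z\in[a,b]\exists y\in[a,b][H(z)<H(y)]$. Let $z\in[a,b]$. Using the continuity of $H$ at $a$ and $b$ and $H(a)=H(b)=0$, fix $\delta>0$ with $H(u)<\frac{1}{2}\varepsilon$ on $[a,a+\delta]\cup[b-\delta,b]$, and decide (by cotransitivity) whether $z$ lies near an endpoint or in $(a,b)$. If $z$ is near an endpoint, then $H(z)<\frac{1}{2}\varepsilon<\frac{3}{4}\varepsilon\le H(x_0)$, so $y:=x_0$ works. Otherwise $z\in(a,b)$, and in the subcase that we can see $z\in\mathcal{X}$ we have $D^2H(z)=2\varepsilon>0$, so Lemma~\ref{L:basic}(i) supplies the witness directly. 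The hard subcase is an interior $z$ whose membership in $\mathcal{X}$ is undecided.

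To handle that subcase I would import, inside a closed neighbourhood $[z-r,z+r]\subseteq(a,b)$ of $z$, the bisection of the first proof of Lemma~\ref{L:csby}: a simultaneous shrinking of a location-interval and of a parameter-interval in $[0,\frac{1}{2}\varepsilon]$, keeping two disjoint parameter-subintervals at every step, so that the resulting map $\alpha\mapsto(\phi(\alpha),\psi(\alpha))$ from $\mathcal{C}$ has $\psi$ strongly injective and $H_{\psi(\alpha)}$ attaining its maximum over $[z-r,z+r]$ at $\phi(\alpha)$; then $D^1G\equiv 0$ together with Lemmas~\ref{L:helpcsby} and \ref{L:helphelpcsby}(ii) gives $G'\bigl(\phi(\alpha)\bigr)=-\psi(\alpha)/(b-a)$, so $\phi$ is strongly injective and its range is positively uncountable, and we may choose $\alpha$ with $\phi(\alpha)\in\mathcal{X}$. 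Then $D^2H_{\psi(\alpha)}\bigl(\phi(\alpha)\bigr)=2\varepsilon>0$ by membership in $\mathcal{X}$, whereas $D^2H_{\psi(\alpha)}\bigl(\phi(\alpha)\bigr)\le 0$ by Lemma~\ref{L:basic}(ii): a contradiction, so \emph{a fortiori} $\exists y\in[a,b][H(z)<H(y)]$. With every $z$ disposed of, $\forall z\in[a,b]\exists y[H(z)<H(y)]$ contradicts Theorem~\ref{T:weierstr}; hence $\neg\exists x\in[a,b][G(x)>0]$, and, symmetrically, $\forall x\in[a,b][G(x)=0]$.

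The last subcase is the step I expect to be the main obstacle. A naive attempt — take $w\in\mathcal{X}$ close to $z$ and apply Lemma~\ref{L:basic}(i) at $w$ — only produces some $y$ with $H(y)>H(w)$, and $H(w)$ may fall below $H(z)$ by an amount one cannot control, so it does not deliver $H(z)<H(y)$; one genuinely needs a point of $\mathcal{X}$ at which some $H_\rho$ \emph{attains} a local maximum, which is exactly what forces the perturbation bisection (the two disjoint parameter-subintervals, and Riemann's second result through Lemma~\ref{L:helpcsby}) to be carried over from Lemma~\ref{L:csby}. Everything else — the endpoint estimate, the trichotomy on $z$, the final appeal to Theorem~\ref{T:weierstr} — is routine once that case is settled, and, as announced, this route yields no constructive lower bound on $|D^2G|$, in contrast with Lemma~\ref{L:csby}(i).
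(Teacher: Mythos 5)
Your overall frame matches the paper's version~II: reduce to $\neg\exists x\in[a,b][G(x)>0]$, pass to $H$ and $H_\rho$, dispose of the endpoints by continuity of $H$, and aim to contradict Theorem~\ref{T:weierstr}; and your remark that taking $w\in\mathcal{X}$ close to $z$ and applying Lemma~\ref{L:basic}(i) at $w$ cannot control $H(w)$ against $H(z)$ names the real obstacle. But the step you propose for the remaining interior $z$ does not close the gap. Running the Lemma~\ref{L:csby} bisection inside $[z-r,z+r]$ produces, in the limit, a point $w$ at which $H_{\psi(\alpha)}$ attains its supremum over $[z-r,z+r]$, and Lemma~\ref{L:basic}(ii) requires $w$ to be strictly interior. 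In Lemma~\ref{L:csby} that interiority is secured by $H_\rho(a)=0$, $H_\rho(b)=\rho\le\frac{1}{2}\varepsilon$ and the anchor $H_\rho(x_0)\ge\frac{3}{4}\varepsilon$; over $[z-r,z+r]$ you have no interior anchor and no boundary control, so $w$ may coincide with $z-r$ or $z+r$ and the contradiction does not follow. And if you instead run the bisection over all of $[a,b]$ so as to recover interiority, you have simply rederived the first proof of Lemma~\ref{L:csby}(i); Theorem~\ref{T:weierstr} then plays no role and the ``version~II'' aim is lost.

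The paper avoids per-$z$ bisection entirely by separating the two concerns. It first builds, by a countable nested-interval process over the $\rho$-axis alone, a single $\rho$ with $\forall n\exists y\in[a,b][H_\rho(f(n))<H_\rho(y)]$: at the $n$-th step, the Lemma~\ref{L:cs} bisection (over all of $[a,b]$, so interiority holds) together with Lemmas~\ref{L:helpcsby} and \ref{L:helphelpcsby}(ii) supplies two values $\rho_0\;\#_\mathcal{R}\;\rho_1$ whose maximisers $z_0,z_1$ are apart, so at least one is apart from $f(n)$, letting one shrink the parameter interval around that $\rho_i$ while the strict inequality $H_\rho(f(n))<H_\rho(z_i)$ survives for all nearby $\rho$. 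Then, with this $\rho$ fixed, an arbitrary $z\in(a,b)$ is handled by a modulus-of-continuity device: one constructs a point $z^\ast$ near $z$ and apart from all $f(n)$, gets $y$ with $H_\rho(z^\ast)<H_\rho(y)$ from $D^2H_\rho(z^\ast)>0$, and then compares $|z^\ast-z|$ against a modulus of continuity of $H_\rho$ at $z^\ast$; either the witness $y$ serves for $z$ as well, or some $f(i)$ with $i\le m$ lies within a prepared tolerance $\delta_i$ of $z$, in which case the estimate set up at step $i$ applies. Neither horn requires deciding whether $z$ is in $\mathcal{X}$, which is precisely the decidability your trichotomy tacitly assumes.
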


  \begin{proof} Assume we find $x$ in $[a,b]$ such that $G(x) >0$. Let  $H$ be the function from $[a,b]$ to $\mathcal{R}$ such that, for all $y$ in $[a,b]$, $H(y)= G(y) -G(x)\frac{(b-y)(y-a)}{(b-a)^2}$. Note: $H(x) \ge \frac{3}{4}G(x) >0$ and, for all $y$ in $(a,b)$, $D^2H(y)$ exists and $D^2H(y) = D^2G(y) +2G(x)>0$. 
  
 For each real number $\rho$,  
   let $H_\rho$ be the function from $[a,b]$ to $\mathcal{R}$ such that, for all $y$ in $[a,b]$, $H_\rho(y) = H(y) + \rho \frac{y-a}{b-a}$.  Note: for each $\rho$, for all $y$ in $(a,b)$, $D^2H_\rho(y)$ exists  and $D^2H_\rho(y) =D^2H(y) = D^2G(y) + 2G(x)$. Also note: for all $\rho$ in $[0, \frac{1}{2}G(x)]$, $H_\rho(x) =H(x) +\rho\frac{x-a}{b-a}\ge \frac{3}{4}G(x) >0= H_\rho(a)$ and $H_\rho(x)\ge\frac{3}{4}G(x) > \rho=H_\rho(b)$.
    
Find $\delta>0$ such that $\forall z \in [a, a+\delta)\cup(b-\delta,b][G(z) < \frac{1}{2}G(x)]$ and note: $\forall \rho \in[0, \frac{1}{2}G(x)]\forall z
\in [a, a+\delta)\cup(b-\delta,b][H_\rho(z) < H_\rho(x)]$.

We now intend to prove: $\exists \rho \in \mathcal{R}\forall z \in [a,b]\exists y \in[a,b][H_\rho(z) < H_\rho(y)]$. 

To this end, we define, step by step, an infinite sequence $(c_0, d_0), (c_1, d_1), \ldots$ of pairs of real numbers such that $(c_0, d_0) = (0, \frac{1}{2}G(x))$ and, for each $n$, \begin{enumerate}[\upshape (i)] \item  $c_n<c_{n+1} < d_{n+1} < d_n$ and $d_{n+1}-c_{n+1}<\frac{2}{3}(d_n -c_n)$, and \item  $\forall \rho \in (c_{n+1}, d_{n+1})\exists y \in [a,b][H\bigl(f(n)\bigr) < H(y)]$.
\end{enumerate}
Let $n$ be given such that $(c_n, d_n)$ has been defined already. We consider $f(n)$ and  distinguish two cases. 

\textit{Case (a)}. $f(n) \in [a, a+\delta)\cup(b-\delta,b]$. Then: $\forall \rho \in [0, \frac{1}{2}G(x)][H_\rho\bigl(f(n)\bigr) <H_\rho(x)]$. We define: $(c_{n+1}, d_{n+1}) := (\frac{2}{3}c_n +\frac{1}{3}d_n, \frac{1}{3}c_n +\frac{2}{3}d_n)$. 

\textit{Case (b)}. $a<f(n)<b
 $. Using the construction from the proof of Lemma \ref{L:cs}, find $\rho_0, \rho_1$ in $(\frac{2}{3}c_n +\frac{1}{3}d_n, \frac{1}{3}c_n +\frac{2}{3}d_n)$ and $z_0,z_1$ in $(a,b)$ such that  $\rho_0\;\#_\mathcal{R}\;\rho_1$ and  $\forall i<2 \forall y \in [a,b][y \;\#_\mathcal{R} \;z_i \rightarrow H_{\rho_i}(y) < H_{\rho_i}(z_i)]$.   As we saw in the proof of Lemma \ref{L:csby}, one may conclude: $G$ is differentiable at both $z_0$ and $z_1$ and $\forall i<2[G'(z_i) = -\frac{\rho_i}{b-a}]$ and, therefore, by Lemma \ref{L:helphelpcsby}, $z_0 \;\#\;z_1$.  
 Note: either: $f(n) \;\#_\mathcal{R}\; z_0$ or: $f(n) \;\#_\mathcal{R}\; z_1$.  We distinguish two subcases. 
 
 \textit{Case (b)i}.  $f(n) \;\#_\mathcal{R}\; z_0$. Conclude: $H_{\rho_0}\bigl(f(n)\bigr) < H_{\rho_0}(z_0)$. Consider $\varepsilon:= H_{\rho_0}(z_0)-H_{\rho_0}\bigl(f(n)\bigr)$ and define: 
 
 $(c_{n+1}, d_{n+1}):=\bigl(\sup(\frac{2}{3}c_n +\frac{1}{3}d_n, \rho_0 -\frac{\varepsilon}{3}), \inf(\rho_0+\frac{\varepsilon}{3}, \frac{1}{3}c_n +\frac{2}{3}d_n)\bigr)$. 
 
 Then, for each $\rho$ in $(c_{n+1}, d_{n+1})$, $|H_{\rho}\bigl(f(n)\bigr)-H_{\rho_0}\bigl(f(n)\bigr)|<\frac{\varepsilon}{3}$ and $|H_\rho(z_0) -H_{\rho_0}(z_0)|< \frac{\varepsilon}{3}$ and: $H_\rho\bigl(f(n)\bigr) < H_\rho(z_0)$. 
 
 \textit{Case (b)ii}.  $f(n) \;\#_\mathcal{R}\; z_1$. This subcase is treated like subcase (b)i. (Replace everywhere the subindex $0$ by the subindex $1$.)
 
 Now find $\rho$ such that $\forall n[c_n < \rho< d_n]$ and note: $\forall n \exists y \in [a,b][H_\rho\bigl(f(n)\bigr) < H_\rho(y)]$. Also observe: for all $z$ in $(a,b)$, if $\forall n[f(n) \;\#_\mathcal{R} \; z]$, then $D^2H_\rho(z) >0$, and, according to Lemma \ref{L:basic}, $\exists y \in [a,b][H_\rho(z)<H_\rho(y)]$. The statement $\forall z \in (a,b)[\exists n[f(n) = x]\;\vee\;\forall n[f(n) \;\#_\mathcal{R}\; z]]$ is false, but, nevertheless, one may prove: $\forall z \in (a,b) \exists y \in [a,b][H_\rho(z) < H_\rho(y)]$, as we do now.
 
  First, find, using the continuity of $H_\rho$ an infinite sequence $\delta_0, \delta_1, \dots$ of  reals such that, for each $n$, $0<\delta_n<\frac{1}{2^n}$ and $ \forall z \in(a,b)[|f(n) - z| <\delta_n \rightarrow \exists y \in[a,b][H_\rho(z) < H_\rho(y)]]$. Now, let $z$ be an element of $(a,b)$. Find $\alpha$ such that, for each $n$, if $\alpha(n) =0$, then $|f(n) -z|>\frac{1}{2}\delta_n$, and, if $\alpha(n) \neq 0$, then $|f(n) -z|<\delta_n$. Define an infinite sequence of pairs of reals $(a_0, b_0), (a_1, b_1), \ldots$ such that, for each $n$ \begin{enumerate}[\upshape (i)] \item if $\forall i\le n[\alpha(i) = 0]$, then $(a_n, b_n) = (z-\frac{1}{2}\delta_n, z+\frac{1}{2}\delta_n)$, and, \item if $\exists i \le n[\alpha(i) \neq 0]$,  then $a_{n-1} <a_n <b_n<b_{n-1}$ and $b_n -a_n < \frac{1}{2}(b_{n-1}-a_{n-1})$ and $f(n)<a_n$ or $b_n < f(n)$.
  \end{enumerate}
  Find $z^\ast$ such that $\forall n[a_n < z^\ast < b_n]$. Note $\forall n[f(n) \;\#_\mathcal{R} \;z^\ast]$ and find $y$ such that $H_\rho(z^\ast) < H_\rho(y)$.  Find $m$  such that  $\forall v \in {a,b}[|z^\ast -v|<\frac{1}{2^m} \rightarrow H_\rho(v) < H_\rho(y)]$ and distinguish two cases.
  
  \textit{Case (a)}. $|z^\ast-z|<\frac{1}{2^m}$. Conclude: $H_\rho(z) < H_\rho(y)$.
  
  \textit{Case (b)}. $|z^\ast -z|>\frac{1}{2^{m+1}}> \frac{1}{2}\delta_m$. Find $i\le m$ such that $\alpha(i) \neq 0$, and, therefore: $|f(n) -z| < \delta_i$ and: $\exists y \in [a,b]][H_\rho(z) < H_\rho(y)]$.
  
  We thus see: $\forall z \in (a,b)\exists y \in[a,b][H_\rho(z) < H_\rho(y)]$. 
 In the beginning of the proof we found $\delta>0$  such that $\forall \rho \in[0, \frac{1}{2}G(x)]\forall z
\in [a, a+\delta)\cup(b-\delta,b][H_\rho(z) < H_\rho(x)]$.
We may conclude:  $\forall z \in [a,b]\exists y \in[a,b][H_\rho(z) < H_\rho(y)]$. That is impossible, according to Theorem \ref{T:weierstr}.

We have to conclude: $\neg \exists x \in[a,b][G(x) >0]$. In a similar way, we prove:  $\neg \exists x \in[a,b][G(x) <0]$. Therefore: $\forall x \in[a,b][G(x) =0]$.
  \end{proof}
\subsection{The final result}\begin{theorem}\label{T:triumph} Every co-enumerable subset $\mathcal{X}$ of $[\pi,\pi]$ guarantees uniqueness.\end{theorem}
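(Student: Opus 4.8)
The plan is to imitate the proofs of Theorems \ref{T:cucofinite} and \ref{T:evfullunique}, replacing Corollary \ref{cor:cs} by the \emph{extended} Cantor--Schwarz Lemma in the form of Corollary \ref{cor:csby}. This dispenses altogether with co-derivative extensions, eventual fullness and Bar Induction; the genuinely new difficulty has already been absorbed into Lemma \ref{L:csby}. Let $f$ from $\mathbb{N}$ to $\mathcal{R}$ witness the co-enumerability of $\mathcal{X}$, so that $\{x\in[-\pi,\pi]\mid\forall n[x\;\#_\mathcal{R}\;f(n)]\}\subseteq\mathcal{X}$, and let $b_0,a_1,b_1,\ldots$ be given with $F(x)=\frac{b_0}{2}+\sum_{n>0}a_n\sin nx+b_n\cos nx=0$ for all $x\in\mathcal{X}$.

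The heart of the matter is a \emph{bounded case}: \emph{if} $B_0,A_1,B_1,\ldots$ is a bounded sequence of reals, $\mathcal{Y}$ a co-enumerable subset of $[-\pi,\pi]$, and $K(t):=\frac{B_0}{2}+\sum_{n>0}A_n\sin nt+B_n\cos nt=0$ for all $t\in\mathcal{Y}$, \emph{then} $B_0=0$ and $A_n=B_n=0$ for all $n>0$. I would prove this exactly as Theorem \ref{T:cu}: the double antiderivative $G_K(t):=\frac14 B_0t^2+\sum_{n>0}\frac{-A_n}{n^2}\sin nt+\frac{-B_n}{n^2}\cos nt$ is defined and continuous on $[-\pi,\pi]$; Riemann's first result gives $D^2G_K(t)=0$ for $t\in\mathcal{Y}\cap(-\pi,\pi)$, still a co-enumerable subset of $[-\pi,\pi]$ once $\pm\pi$ are added to the enumerating function of $\mathcal{Y}$; Corollary \ref{cor:csby} then makes $G_K$ linear on $[-\pi,\pi]$, and $G_K(\pi)=G_K(-\pi)$ (because $\sin n\pi=0$) forces $G_K$ to be constant; comparing Fourier coefficients yields $A_n=0$ and $B_n=(-1)^nB_0$ for all $n>0$. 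The one genuinely new wrinkle is the final step $B_0=0$: one cannot, as in Theorem \ref{T:cu}, evaluate at $0$, since $0$ need not lie in $\mathcal{Y}$. Instead one builds, by successive bisection, a point $t_0\in\mathcal{Y}\cap[\frac14\pi,\frac34\pi]$; then $K(t_0)=0$, $\sin t_0\ge\frac12\sqrt2$, and, assuming $B_0\;\#_\mathcal{R}\;0$, the identity $0=K(t_0)=B_0\bigl(\frac12+\sum_{n>0}(-1)^n\cos nt_0\bigr)$ forces $\cos nt_0\to 0$, so $\sin nt_0\,\sin t_0=\cos nt_0\,\cos t_0-\cos(n+1)t_0\to 0$, so $\sin nt_0\to 0$, contradicting $\cos^2 nt_0+\sin^2 nt_0=1$; hence $B_0=0$ and $B_n=0$ for all $n>0$.

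Granting the bounded case, the theorem follows. If $b_0,a_1,b_1,\ldots$ is itself bounded, apply the bounded case with $\mathcal{Y}=\mathcal{X}$. Otherwise use the device of Riemann and Kronecker, as at the ends of the proofs of Theorems \ref{T:cucofinite} and \ref{T:evfullunique}: for $x\in\mathcal{X}$, the function $K_x(t):=F(x+t)+F(x-t)=b_0+2\sum_{n>0}\bigl(a_n\sin nx+b_n\cos nx\bigr)\cos nt$ has bounded coefficient sequence (since $F(x)=0$, the numbers $a_n\sin nx+b_n\cos nx$ tend to $0$) and, extending $F$ periodically with period $2\pi$ and arguing as for the operation $+_\pi$ introduced before Lemma \ref{L:help1}, $K_x$ vanishes on a co-enumerable subset of $[-\pi,\pi]$; the bounded case gives $b_0=0$ and $a_n\sin nx+b_n\cos nx=0$ for all $n>0$, and this for every $x\in\mathcal{X}$. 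Finally, fixing $n>0$, the finitely supported (hence bounded) sequence with sine coefficient $a_n$ and cosine coefficient $b_n$ at frequency $n$ gives a trigonometric series vanishing on $\mathcal{X}$, so one last application of the bounded case yields $a_n=b_n=0$.

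I expect the step needing most care to be the one just flagged --- obtaining $B_0=0$ without the luxury of evaluating at $0$ --- together with the routine but fussy bookkeeping in the Riemann--Kronecker reduction (periodicity, and checking that the relevant sets of $t$ remain co-enumerable in the \emph{closed} interval $[-\pi,\pi]$). By contrast, the linearity of $G_K$, which is where the Fan Theorem, Riemann's second result and the positive uncountability of co-enumerable sets are really used, comes ready-made from Corollary \ref{cor:csby}.
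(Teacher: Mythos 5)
Your proof is correct and follows essentially the same architecture as the paper's: the entire weight is carried by Corollary \ref{cor:csby} applied to the double antiderivative $G$, and the Riemann--Kronecker device then removes the boundedness assumption. The one place where you add something the paper does not spell out is the step $B_0 = 0$ in the bounded case. The paper simply writes ``conclude, as in the proof of Theorem \ref{T:cu}'', an argument which there relied on $\lim b_n = 0$, obtained by evaluating at $x=0$; here $0$ need not lie in $\mathcal{X}$. The paper's gloss is nonetheless sound, because its bounded case is only ever invoked inside the Riemann--Kronecker step, on coefficients $B_n = 2(a_n\sin nx + b_n\cos nx)$ that are already known to tend to $0$, and then $B_n = (-1)^n B_0$ together with $\lim B_n = 0$ gives $|B_0| = |B_n| \to 0$, hence $B_0 = 0$ at once. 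Your alternative --- bisecting into $\mathcal{Y}\cap[\tfrac14\pi,\tfrac34\pi]$ to obtain $t_0$ with $\sin t_0 \ge \tfrac12\sqrt 2$, using convergence of $K(t_0)$ to force $\cos nt_0 \to 0$, and then the angle-addition identity to force $\sin nt_0 \to 0$, contradicting $\cos^2 nt_0 + \sin^2 nt_0 = 1$ --- is correct and has the merit of making the bounded case a self-contained lemma not depending on $\lim B_n = 0$. Your closing step, re-applying the bounded case to the single-frequency series to get $a_n = b_n = 0$, is also valid but heavier than necessary: the paper's implicit argument is simply that a co-enumerable subset of $[-\pi,\pi]$ is dense, so the continuous function $t \mapsto a_n\sin nt + b_n\cos nt$ vanishes identically once it vanishes on $\mathcal{X}$. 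These are minor variations; the substance is the same.
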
 \begin{proof} Find a function $f$ from $\mathbb{N}$ to $\mathcal{R}$ such that, for each $x$ in $[-\pi,\pi]$, if for all $n$, $x\;\#_\mathcal{R}\;f(n)$, then $x\in \mathcal{X}$. Let $b_0, a_1, b_1, \ldots$ be an infinite sequence of reals such that,  for all $x$ in $\mathcal{X}$, $$F(x) := \frac{b_0}{2}+\sum_{n>0} a_n \sin nx + b_n \cos nx =0.$$

   \textit{We make a provisional assumption}:   the infinite sequence $b_0, a_1, b_1, \ldots$ is bounded.
   
   The function $$G(x) := \frac{1}{4}b_0x^2 +\sum_{n>0}\frac{- a_n}{n^2} \sin nx + \frac{-b_n}{n^2} \cos nx$$ is therefore defined everywhere on $[-\pi,\pi]$ and everywhere continuous, 
    and, according to Lemma \ref{L:csby}, for all $x$ in $\mathcal{X}$, $D^2G(x) =0$. Use Corollary \ref{cor:csby} and conclude, as in the proof of Theorem \ref{T:cu},  $b_0=0$ and, for all $n>0$, $a_n = b_n =0$.

     \smallskip \textit{One may do without the provisional assumption}. We no longer assume that    the infinite sequence $b_0, a_1, b_1, \ldots$ is bounded.
   
   Using again the suggestion made by Riemann and  Kronecker, we reason as follows.
   
    Assume: $x \in \mathcal{X}$. Define for each $t$ in $[-\pi, \pi]$, $$ K(t) := F(x+t) + F(x-t)$$ and note: $$K(t) = b_0 + 2\sum_{n>0}(a_n\sin nx + b_n \cos nx)\cos nt,$$

   Define a function $g$ from $\mathbb{N}$ to $\mathcal{R}$ such that, for each $n$, $g(2n) := f(n)-x$ and  $g(2n+1):= = f(n) +x $. Note: for each $t$ in $[-\pi,\pi]$, if, for all $n$, $g(n) \;\#_\mathcal{R}\; t$, then   $K(t)=0$, and: the sequence $n \mapsto a_n\sin nx + b_n \cos nx$ converges and is bounded. Using the first part of the proof, we conclude: $b_0 = 0$ and, for each $n>0$,  $ a_n\sin nx + b_n \cos nx=0$. This conclusion holds for all $x$  in $\mathcal{X}$.  As $\mathcal{X}$ is a co-enumerable subset of $[-\pi,\pi]$, we may conclude: $b_0 = 0$ and, for each $n>0$, $a_n = b_n = 0$. \end{proof}
   
   \subsection{Some comments} There is irony in history.
   
   If we restrict ourselves to co-enumerable subsets $\mathcal{X}$ of $(-\pi, \pi)$ such that $[\pi,\pi]\setminus \mathcal{X}$ is located and  almost-enumerable, then Theorem \ref{T:triumph} is a much stronger statement than Theorem \ref{T:ccu}. The stronger and later result is obtained by more simple means.  We did not use Brouwer's Principle of Induction on Monotone Bars, as we did in the proof of Theorem \ref{T:ccu}. Every reference to ordinals or generalized inductive definitions has disappeared. Cantor's original problem has been solved without set-theoretic means.

  The classical version of Theorem \ref{T:triumph} is due to F.~Bernstein, see \cite{bernstein08}, and  W.~Young, see \cite{young09}. One wonders what Cantor himself has thought or would have thought about the result by Bernstein and Young. Although it was obtained during his lifetime, I suspect he has not been able to give any comment.
  
  In \cite{bary64}, chapter XIV, Section 5, the result occurs as a corollary of a theorem due to du Bois-Reymond.  Bernstein concluded from the extended Cantor-Schwarz Lemma \ref{L:csby}  that every \textit{totally imperfect} set, that is, a set without a perfect subset, is a set of uniqueness, see also \cite{cooke93}.
 \section{Using Brouwer's Continuity Principle}\label{S:bcp}
 
 \textit{Brouwer's Continuity Principle} says the following. \begin{quote} {\it Let $R$ be a subset of $\mathcal{N}\times\mathbb{N}$.
 
 If  $\forall \alpha \exists n[\alpha R n]$, then  $\forall \alpha \exists m \exists n\forall \beta[\overline \beta m = \overline \alpha m \rightarrow \beta Rn]$.} \end{quote} 
 
 Brouwer's Continuity Principle has the following consequence: 
  \begin{quote} {\it Let $a, b$ in $\mathcal{R}$ be given such that $a<b$ and let $R$ be a \emph{real} subset of $[a,b]\times\mathbb{N}$, that is: $\forall n \forall x \in [a,b]\forall y \in [a,b][(x=_\mathcal{R}y \;\wedge\; xRn )\rightarrow yRn]$.
  
 If  $\forall x \in[a,b] \exists n[x R n]$, then  \\$\forall x \in [a,b] \exists c\exists d[c<x<d\;\wedge\; \exists n\forall y \in [c,d]\cap[a,b][ y Rn]]$.} \end{quote} 
 
 One may prove this using the fact that there exists a continuous surjection from $\mathcal{N}$ onto $[a,b]$.

 We let $[\omega]^\omega$ denote the set of all $\zeta$ in $\mathcal{N}$ such that $\forall n[\zeta(n)<\zeta(n+1)]$. 
 
 \subsection{The Cantor-Lebesgue Theorem}
 
 \begin{lemma}[Cantor's nagging question]\label{L:cl} Let $a, b, a_0, b_0, a_1, \ldots$ be an infinite sequence of reals such that $a<b$ and, for all $x$ in $[a,b]$, $ \lim_{n \rightarrow \infty} a_n \sin nx + b_n \cos nx = 0$, that is: $\forall p\forall x \in [a,b]\exists n\forall m \ge n[|a_m \sin mx + b_m \cos mx|<\frac{1}{2p}]$. Then: \begin{enumerate}[\upshape (i)] \item (using Brouwer's Continuity Principle):\\ $\lim_{n \rightarrow \infty} a_n = \lim_{n \rightarrow \infty} b_n = 0$, that is $\forall p\exists n\forall m\ge n [|a_m| < \frac{1}{2^p} \;\wedge\;  |b_m| <\frac{1}{2^p}]$. \item (not using Brouwer's Continuity Principle):\\ 
$ \forall p\forall \zeta \in [\omega]^\omega \exists n[|a_{\zeta(n)}|<\frac{1}{2^p} \;\wedge\; |b_{\zeta(n)}| <\frac{1}{2^p}]$. \end{enumerate}   
   
  \end{lemma}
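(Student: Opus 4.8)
The plan is to reduce both parts to the elementary identity
$$g_m(x)^2 + g_m\bigl(x+\tfrac{\pi}{2m}\bigr)^2 = a_m^2 + b_m^2,$$
where $g_m(x) := a_m\sin mx + b_m\cos mx$ and one uses $g_m\bigl(x+\tfrac{\pi}{2m}\bigr) = a_m\cos mx - b_m\sin mx$. Writing $\rho_m := \sqrt{a_m^2+b_m^2}$, this gives at once $|g_m(y)| \le \rho_m$ for every $y$, the Lipschitz bound $|g_m'(y)| = m\,|g_m\bigl(y+\tfrac{\pi}{2m}\bigr)| \le m\rho_m$, and $\max\bigl(|g_m(y)|,|g_m\bigl(y+\tfrac{\pi}{2m}\bigr)|\bigr) \ge \tfrac{\rho_m}{\sqrt 2}$. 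Since $|a_m|,|b_m|\le\rho_m$, in both parts it suffices to make $\rho_m$ small, and the whole point is to pass from ``$g_m$ is small at the arguments we can reach'' to ``$\rho_m$ is small''.

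For part (i) I would first upgrade the pointwise hypothesis to \emph{uniform} convergence on $[a,b]$. Given $p$, apply the hypothesis with parameter $2^{p+1}$: the set $R$ of pairs $(x,n)\in[a,b]\times\mathbb{N}$ with $\forall m\ge n[\,|g_m(x)| < \tfrac{1}{2^{p+2}}\,]$ is a \emph{real} subset (the inequality is preserved under $=_\mathcal{R}$) and $\forall x\in[a,b]\exists n[xRn]$, so the stated consequence of Brouwer's Continuity Principle yields local witnesses and the Heine--Borel/Fan Theorem compresses them into a single $N$ with $\forall m\ge N\,\forall x\in[a,b][\,|g_m(x)| < \tfrac{1}{2^{p+2}}\,]$. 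For $m\ge\lceil\tfrac{\pi}{b-a}\rceil$ one may choose $y$ with $y,\,y+\tfrac{\pi}{2m}\in[a,b]$, and then, for $m\ge N$, the identity forces $\tfrac{\rho_m}{\sqrt 2} \le \max\bigl(|g_m(y)|,|g_m\bigl(y+\tfrac{\pi}{2m}\bigr)|\bigr) < \tfrac{1}{2^{p+2}}$, hence $\rho_m < \tfrac{1}{2^p}$ and therefore $|a_m|,|b_m| < \tfrac{1}{2^p}$; this $m$-bound is the required $n$.

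For part (ii) Brouwer's Continuity Principle is unavailable, so instead I would, given $p$ and $\zeta\in[\omega]^\omega$, manufacture a single troublesome point $x^\ast$ by a nested-interval construction adapted to $\zeta$. Recursively build strictly increasing indices $k_0<k_1<\cdots$ and nested closed intervals $[a,b]=I_0\supseteq I_1\supseteq\cdots$ so that at each step, from $I_j$ with $\zeta(k_j)\cdot|I_j|$ kept above a fixed constant, the identity together with the Lipschitz bound on $g_{\zeta(k_j)}$ lets one pick \emph{decidably} one of two points near the midpoint of $I_j$ and around it a subinterval $I_{j+1}\subseteq I_j$ of length comparable to $\tfrac{1}{\zeta(k_j)}$ on which $g_{\zeta(k_j)}(x)^2$ stays above $\tfrac14\rho_{\zeta(k_j)}^2-\tfrac{1}{2^{2p+3}}$ --- this needs no appeal to a possibly uncomputable ``phase'' of $g_{\zeta(k_j)}$; then let $k_{j+1}$ be the least $k$ with $\zeta(k)\ge 16\,\zeta(k_j)$, so that $\zeta(k_j)\ge 16^{j}$ and $|I_j|\to 0$. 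All finitely many choices per step are governed by a rule in terms of the Cauchy approximations of the $a_m,b_m$, so no Axiom of Countable Choice is needed. By the Cantor Intersection Theorem there is $x^\ast\in\bigcap_j I_j\subseteq[a,b]$, and applying the hypothesis at $x^\ast$ with parameter $2^{p+1}$ gives $n^\ast$ with $\forall m\ge n^\ast[\,|g_m(x^\ast)| < \tfrac{1}{2^{p+2}}\,]$.

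The step I expect to be the real obstacle is the last one. Because $\zeta(k_j)$ was forced to grow geometrically, finding $j^\ast$ with $\zeta(k_{j^\ast})\ge n^\ast$ is a \emph{bounded} search ($j^\ast\le\log_{16}n^\ast$), so the argument never invokes Markov's Principle --- this is exactly the point one must be careful about, since a naive search for an index $k$ with $\rho_{\zeta(k)}$ small, stopped at the first success, would not be constructively legitimate. Once $j^\ast$ is in hand, $x^\ast\in I_{j^\ast+1}$ gives $\tfrac14\rho_{\zeta(k_{j^\ast})}^2-\tfrac{1}{2^{2p+3}} < g_{\zeta(k_{j^\ast})}(x^\ast)^2 < \tfrac{1}{2^{2p+4}}$, whence $\rho_{\zeta(k_{j^\ast})} < \tfrac{1}{2^p}$ and $n := k_{j^\ast}$ witnesses the conclusion.
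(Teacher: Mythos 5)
Your proof is correct, and its chief departure from the paper's argument is a genuine simplification. The paper writes $a_m\sin mx + b_m\cos mx = r_m\cos(mx + y_m)$ with $r_m = \sqrt{a_m^2+b_m^2}$; since the phase $y_m$ is constructible only when one knows $r_m > 0$, the paper must first build an auxiliary $\alpha\in\mathcal{C}$ deciding $r_m<2^{-p}$ versus $r_m>0$, introduce $y_m$ only on the second branch, and carry that case distinction through both parts. Your identity $g_m(y)^2 + g_m\bigl(y+\tfrac{\pi}{2m}\bigr)^2 = a_m^2+b_m^2$, valid unconditionally because $g_m\bigl(y+\tfrac{\pi}{2m}\bigr) = a_m\cos my - b_m\sin my$, makes the phase (and the auxiliary $\alpha$) unnecessary: the only decision in your nested-interval construction is the standard constructive one of locating which of two reals exceeds half their sum up to a tolerance, and that remains meaningful even when $\rho_m=0$. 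This does exactly the work of the paper's guarantee that $|\cos(mx+y_m)|\ge\tfrac12$ on a subinterval of length comparable to $1/m$, but without the split on $\alpha$. Two minor remarks. In part (i) you invoke Heine--Borel/Fan Theorem to get uniform convergence on all of $[a,b]$, whereas the paper needs only a single application of the stated consequence of the Continuity Principle, enlarging $n$ afterwards so that $2\pi/n$ fits inside the resulting subinterval; your extra step is harmless (and is essentially the combined BCP$+$Fan consequence the paper records in Section 10) but not required. In part (ii) your caution about Markov's Principle is right in spirit but moot here, just as in the paper: the auxiliary index sequence is strictly increasing, so $\zeta(k_j)\ge k_j\ge j$, and the search for $j^\ast$ with $\zeta(k_{j^\ast})\ge n^\ast$ is trivially bounded by $n^\ast$ — the paper simply observes this directly.
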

   
   \begin{proof} Define, for each $n$, $r_n :=\sqrt{(a_n)^2 +(b_n)^2}$. 
   
   \smallskip (i)   Let $p$ be given. Define $\alpha$ in $\mathcal{C}$ such that, for each $n$, if $\alpha(n)=0$, then $r_n<\frac{1}{2^p}$ and, if $\alpha(n) =1$, then $r_n>0$. Also find $y_0, y_1, \ldots$ in $\mathcal{R}$ such that, for each $n$, if $\alpha(n) = 1$, then for all $x$ in $[a,b]$,  $a_n \sin nx + b_n \cos nx = r_n \cos(nx  + y_n)$. 
   
Using the Continuity Principle, find $c,d,n$ such that $a<c<d<b$ and    $d-c>\frac{2\pi}{n}$ and  $\forall m \ge n\forall x \in[c,d][|r_m \cos(mx  + y_m)|<\frac{1}{2^p}]$. Note: $\forall m\ge n\exists x \in [c,d][\cos(mx + y_m)=1]$.  Conclude: $\forall m\ge n [r_m <\frac{1}{2^p}]$ and  $\forall m\ge n [|a_m| < \frac{1}{2^p} \;\wedge\;  |b_m| <\frac{1}{2^p}]$.  
    
    \smallskip
 (ii) Let $p$ be given. Define $\alpha$ in $\mathcal{C}$ such that, for each $n$, if $\alpha(n)=0$, then $r_n<\frac{1}{2^p}$ and, if $\alpha(n) =1$, then $r_n>0$. Also find $y_0, y_1, \ldots$ in $\mathcal{R}$ such that, for each $n$, if $\alpha(n) = 1$, then for all $x$ in $[a,b]$,  $a_n \sin nx + b_n \cos nx = r_n \cos(nx  + y_n)$. 
 
  Note that, for each $n$, for all $c,d$, if $d- c \ge\frac{2\pi}{n}$,  one may find $e,f$ such that $c<e<f<d$ and $f-e=\frac{2\pi}{3n}$ and, for all $x$ in $[e,f]$, $|\cos(nx+y_n)| \ge \frac{1}{2}$. 
    
  Assume $\zeta \in [\omega]^\omega$.   Define $\eta$ in $[\omega]^\omega$ such that $b-a>\frac{2\pi}{\zeta\circ\eta(0)}$, and, for each n, $\zeta\circ \eta(n+1) > 3\cdot\zeta\circ \eta(n)$.  Define an infinite sequence $c_0, d_0, c_1, d_1, \ldots$ of reals such that $c_0=a$ and $c_1 = b$ and, for all $n$, $c_n<c_{n+1}<d_{n+1}<d_n$ and $d_{n+1} -c_{n+1}= \frac{2\pi}{3\cdot\zeta\circ\eta(n)}$ and, if $\alpha\bigl(\zeta\circ\eta(n)\bigr)=1$, then, for all $x$ in $[c_{n+1}, d_{n+1}]$, $|\cos\bigl(\zeta\circ\eta(n)\cdot x+y_ {\zeta\circ\eta(n)}\bigr)|\ge \frac{1}{2}$. Find $x$ such that, for all $n>0$, $c_n < x < d_n$. Find $N$ such that for all $n\ge N$, if $\alpha(n) =1$, then $|r_n \cos (nx+y_n) |< \frac{1}{2^{p+1}}$.  Find $n$ such that $\zeta\circ\eta(n) \ge N$. Note: $|\cos \bigl(\zeta\circ\eta(n)\cdot x +y_{\zeta\circ\eta(n)}\bigr)|\ge\frac{1}{2}$ and  $r_{\zeta\circ\eta(n)} < \frac{1}{2^p}$. Define $m:=\zeta\circ\eta(n)$. Either $\alpha(m) = 0$ and $r_m <\frac{1}{2^p}$, or $\alpha(m) = 1$, and also: $r_m<\frac{1}{2^p}$. 
  
  Conclude: $\forall \zeta \in [\omega]^\omega\exists n[r_{\zeta(n)}<\frac{1}{2^p}]$ and  $\forall \zeta \in [\omega]^\omega\exists n[|a_n|<\frac{1}{2^p}\;\wedge\; |b_n|<\frac{1}{2^p}]$.  
   \end{proof}
   
   Lemma \ref{L:cl} was the subject of Cantor's first publication on trigonometric series, see \cite{cantor70a}. Cantor of course did not have Brouwer's Continuity Principle and proved Lemma \ref{L:cl}(ii). By classical logic, (ii) implies (i). He thus gives a classical, indirect proof of (i). Cantor's proof is complicated. I suspect that a direct constructive proof of (i), avoiding Brouwer's Continuity Principle, is impossible but I have no proof of this fact.  Such a proof would explain Cantor's obvious difficulty of finding an easy argument for (i).
   
   Note that Lemma \ref{L:cl}(i) enables us to simplify the proofs of Theorems  \ref{T:cu}, \ref{T:cucofinite} and \ref{T:evfullunique}. In the proofs of these theorems we are given a subset $\mathcal{X}$ of $[-\pi,\pi]$ such that, for some $a,b$, $a<b$ and $[a,b]\subseteq \mathcal{X}$, and   an infinite sequence $b_0, a_1, b_1, \dots $ of reals such that, for all $x$ in $\mathcal{X}$, $\frac{b_0}{2}+\sum_{n>0} a_n\sin nx + b_n \cos nx=0$. Lemma \ref{L:cl}(i) enables us to conclude: the sequence $b_0, a_1, b_1, \dots $ converges to $0$ and thus \textit{ is bounded}. The second halves of the proofs, using the Riemann-Kronecker suggestion, then become superfluous. Cantor in fact used   Lemma \ref{L:cl}(i) in this way. The constructive mathematician who does not want to use Brouwer's Continuity Principle still has to invoke the Riemann-Kronecker suggestion.
   
   In order to show that a similar observation applies to Theorem \ref{T:triumph} we prove an extension of Lemma \ref{L:cl}.
   
   We need another consequence of Brouwer's Continuity Principle:
  \begin{quote} {\it Let $a, b$ in $\mathcal{R}$ be given such that $a<b$ and let $\mathcal{X}$ be a co-enumerable subset of $[a,b]$.  Let $R$ be a \emph{real} subset of $\mathcal{X}\times\mathbb{N}$, that is: $\forall n \forall x \in [a,b]\forall y \in [a,b][(x=_\mathcal{R}y \;\wedge\; xRn )\rightarrow yRn]$.
  
If  $\forall x \in\mathcal{X} \exists n[x R n]$, then \\ $\forall x \in \mathcal{X} \exists c\exists d[a<c<x<d<b\;\wedge\; \exists n\forall y \in [c,d]\cap \mathcal{X}[ y Rn]]$.} \end{quote} 
 
 One may prove this using the fact that there exists a continuous surjection from $\mathcal{N}$ onto $\mathcal{X}$. 
    \begin{lemma}\label{L:cl2} Let $a, b$ be real numbers such that $a<b$. and let $\mathcal{X}$ be a co-enumerable subset of $[a,b]$ such that $\forall x \in \mathcal{X}[ \lim_{n \rightarrow \infty} a_n \sin nx + b_n \cos nx = 0]$.  Then: \begin{enumerate}[\upshape (i)] \item (using Brouwer's Continuity Principle): $\lim_{n \rightarrow \infty} a_n = \lim_{n \rightarrow \infty} b_n = 0$. \item (not using Brouwer's Continuity Principle):\\
$ \forall p\forall \zeta \in [\omega]^\omega \exists n[|a_{\zeta(n)}|<\frac{1}{2^p} \;\wedge\; |b_{\zeta(n)}| <\frac{1}{2^p}]$. \end{enumerate}

  \end{lemma}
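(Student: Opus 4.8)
The plan is to mimic closely the proof of Lemma \ref{L:cl}, adapting each of the two arguments to the situation where the hypothesis $\lim_{n\to\infty}a_n\sin nx+b_n\cos nx=0$ is only assumed for $x$ in the co-enumerable set $\mathcal{X}$ rather than for all $x$ in an interval. As before, set $r_n:=\sqrt{(a_n)^2+(b_n)^2}$ for each $n$. For a given $p$, define $\alpha$ in $\mathcal{C}$ so that $\alpha(n)=0\Rightarrow r_n<\frac{1}{2^p}$ and $\alpha(n)=1\Rightarrow r_n>0$, and pick phases $y_n$ so that whenever $\alpha(n)=1$ one has $a_n\sin nx+b_n\cos nx=r_n\cos(nx+y_n)$ for all $x$ in $[a,b]$.

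For part (i), I would invoke the variant of Brouwer's Continuity Principle for co-enumerable sets that is stated just before this Lemma: since $R$, defined by $xRn\leftrightarrow\forall m\ge n[|a_m\sin mx+b_m\cos mx|<\frac{1}{2^{p+1}}]$, is a real subset of $\mathcal{X}\times\mathbb{N}$ and $\forall x\in\mathcal{X}\exists n[xRn]$, we obtain $c,d,n$ with $a<c<d<b$, $d-c>\frac{2\pi}{n}$, and $\forall m\ge n\,\forall y\in[c,d]\cap\mathcal{X}[|a_m\sin my+b_m\cos my|<\frac{1}{2^{p+1}}]$. The one extra wrinkle compared with Lemma \ref{L:cl}(i) is that the interval $[c,d]$ no longer lies inside $\mathcal{X}$, so one cannot directly evaluate the inequality at the point where $\cos(mx+y_m)=1$. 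Here I would use that $\mathcal{X}$ is co-enumerable, hence dense (its complement is contained in the range of a sequence, so any nondegenerate subinterval of $[c,d]$ contains a point of $\mathcal{X}$): for each $m\ge n$ the set $\{x\in[c,d]:\cos(mx+y_m)>\frac12\}$ contains a subinterval, which meets $\mathcal{X}$ at some point $y$, whence $r_m=r_m\cos(my+y_m)\cdot\frac{1}{\cos(my+y_m)}<\frac{2}{2^{p+1}}=\frac{1}{2^p}$ when $\alpha(m)=1$, and $r_m<\frac{1}{2^p}$ trivially when $\alpha(m)=0$. This gives $\forall m\ge n[|a_m|<\frac{1}{2^p}\wedge|b_m|<\frac{1}{2^p}]$.

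For part (ii) I would copy verbatim the argument of Lemma \ref{L:cl}(ii). That proof never uses that $[a,b]\subseteq\mathcal{X}$; it only needs the hypothesis at a single point $x$, namely the point obtained by nested intervals $c_n<c_{n+1}<d_{n+1}<d_n$ with $d_{n+1}-c_{n+1}=\frac{2\pi}{3\cdot\zeta\circ\eta(n)}$ chosen so that $|\cos(\zeta\circ\eta(n)\cdot x+y_{\zeta\circ\eta(n)})|\ge\frac12$ on $[c_{n+1},d_{n+1}]$. The only adjustment is that this limit point $x$ must be arranged to lie in $\mathcal{X}$: since the complement of $\mathcal{X}$ is enumerated by some $g$, one builds the nested intervals while additionally ensuring, at step $n$, that $g(n)\notin[c_{n+1},d_{n+1}]$ (possible because each interval can be shrunk around a point apart from $g(n)$, using that $|\cos|\ge\frac12$ holds on a whole subinterval). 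Then $x\in\mathcal{X}$, so the hypothesis $\lim_{m}a_m\sin mx+b_m\cos mx=0$ applies at $x$, and the rest of the computation goes through unchanged, yielding $\forall\zeta\in[\omega]^\omega\exists n[r_{\zeta(n)}<\frac{1}{2^p}]$.

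The main obstacle, and the only genuinely new point, is exactly the density/apartness bookkeeping forced by replacing an interval with a merely co-enumerable set: in (i) one must evaluate the continuity-principle estimate at a point of $\mathcal{X}$ inside the good region $\{\cos>\frac12\}$, and in (ii) one must steer the nested-interval construction so its limit avoids every $g(n)$. Both are routine given that co-enumerable sets are dense and that one can always pass to a subinterval on which $|\cos(m\cdot+y_m)|$ stays bounded below; no new principle beyond what is already available (the co-enumerable form of the Continuity Principle for (i), and nothing beyond the construction of Lemma \ref{L:cl}(ii) for (ii)) is needed.
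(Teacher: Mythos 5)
Your proposal is correct and matches the paper's own proof almost step for step: part~(i) applies the co-enumerable form of the Continuity Principle and then evaluates the resulting estimate at a point of $\mathcal{X}$ lying inside the region where $\cos(mx+y_m)>\frac{1}{2}$, and part~(ii) modifies the nested-interval construction of Lemma~\ref{L:cl}(ii) so that the limit point is apart from every enumerated point and hence lies in $\mathcal{X}$. The only implementation difference is in (ii): where you propose shrinking a single subinterval to dodge each $x_n$, the paper widens the ambient interval (working with $\frac{4\pi}{n}$ rather than $\frac{2\pi}{n}$ and requiring $\zeta\circ\eta(n+1)>6\cdot\zeta\circ\eta(n)$) so as to extract \emph{two} disjoint subintervals $[e,f]$ and $[g,h]$ on which $|\cos|\ge\frac{1}{2}$, and then by cotransitivity of the order picks the one lying entirely on one side of $x_n$; both bookkeeping schemes are constructively sound.
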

   
   \begin{proof}  The proof is a slight adaptation of the proof of Lemma \ref{L:cl}. 
   Define, for each $n$, $r_n :=\sqrt{(a_n)^2 +(b_n)^2}$. 
   
   \smallskip (i)   Let $p$ be given. Define $\alpha$ in $\mathcal{C}$ such that, for each $n$, if $\alpha(n)=0$, then $r_n<\frac{1}{2^p}$ and, if $\alpha(n) =1$, then $r_n>0$. Also find $y_0, y_1, \ldots$ in $\mathcal{R}$ such that, for each $n$, if $\alpha(n) = 1$, then for all $x$ in $[a,b]$,  $a_n \sin nx + b_n \cos nx = r_n \cos(nx  + y_n)$. Find $c,d$ such that $a<c<d<b$ and $n$ such that    $d-c>\frac{2\pi}{n}$ and  $\forall m \ge n\forall x \in[c,d]\cap \mathcal{X}[|r_m \cos(mx  + y_m)|<\frac{1}{2^{p+1}}]$. Note: $\forall m\ge n\exists x \in [c,d]\cap\mathcal{X}[\cos(mx + y_m)>\frac{1}{2}]$.  Conclude: $\forall m\ge n [r_m <\frac{1}{2^p}]$ and  $\forall m\ge n [|a_m| < \frac{1}{2^p} \;\wedge\;  |b_m| <\frac{1}{2^p}]$.   
    
    \smallskip
 (ii) Let $p$ be given. Define $\alpha$ in $\mathcal{C}$ such that, for each $n$, if $\alpha(n)=0$, then $r_n<\frac{1}{2^p}$ and, if $\alpha(n) =1$, then $r_n>0$. Also find $y_0, y_1, \ldots$ in $\mathcal{R}$ such that, for each $n$, if $\alpha(n) = 1$, then for all $x$ in $[a,b]$,  $a_n \sin nx + b_n \cos nx = r_n \cos(nx  + y_n)$. 
 
  Note that, for each $n$, for all $c,d$, if $d\ge c +\frac{4\pi}{n}$,  one may find $e,f,g,h$ such that $c<e<f<g<h<d$ and $f-e=h-g=\frac{2\pi}{3n}$ and, for all $x$ in $[e,f]\cup[g,h]$, $|\cos(nx+y_n)| \ge \frac{1}{2}$.

   Find $x_0, x_1, x_2, \ldots$ in $[a,b]$ such that, for all $x$ in $[a,b]$, if $\forall n[x \;\#_\mathcal{R}\; x_n]$, then $x \in \mathcal{X}$.

  Assume $\zeta \in [\omega]^\omega$.   Define $\eta$ in $[\omega]^\omega$ such that $b-a>\frac{4\pi}{\zeta\circ\eta(0)}$, and, for each n, $\zeta\circ \eta(n+1) > 6\cdot\zeta\circ \eta(n)$.  Define an infinite sequence $c_0, d_0, c_1, d_1, \ldots$ of reals such that $c_0=a$ and $c_1 = b$ and, for all $n$, $c_n<c_{n+1}<d_{n+1}<d_n$ and $d_{n+1} -c_{n+1}= \frac{2\pi}{3\cdot\zeta\circ\eta(n)}$\footnote{Note: $d_{n+1}-c_{n+1} = \frac{4\pi}{6\cdot\zeta\circ\eta(n)}>\frac{4\pi}{\zeta\circ\eta(n+1)}$.} and: either $x_n<c_{n+1}$ or $d_{n+1}<x_n$, and,  if $\alpha\bigl(\zeta\circ\eta(n)\bigr)=1$, then, for all $x$ in $[c_{n+1}, d_{n+1}]$, $|\cos\bigl(\zeta\circ\eta(n)\cdot x+y_ {\zeta\circ\eta(n)}\bigr)|\ge \frac{1}{2}$. Find $x$ such that, for all $n>0$, $c_n < x < d_n$. Note: for all $n$, $x\;\#_\mathcal{R}\; x_n$, and: $x\in \mathcal{X}$.  Find $N$ such that for all $n\ge N$, if $\alpha(n) =1$, then $|r_n \cos (nx+y_n) |< \frac{1}{2^{p+1}}$.  Find $n$ such that $\zeta\circ\eta(n) \ge N$. Note: $|\cos \bigl(\zeta\circ\eta(n)\cdot x +y_{\zeta\circ\eta(n)}\bigr)|\ge\frac{1}{2}$ and  $r_{\zeta\circ\eta(n)} < \frac{1}{2^p}$. Define $m:=\zeta\circ\eta(n)$. Either $\alpha(m) = 0$ and $r_m <\frac{1}{2^p}$, or $\alpha(m) = 1$, and also: $r_m<\frac{1}{2^p}$. 
  
  Conclude: $\forall \zeta \in [\omega]^\omega\exists n[r_{\zeta(n)}<\frac{1}{2^p}]$ and  $\forall \zeta \in [\omega]^\omega\exists n[|a_n|<\frac{1}{2^p}\;\wedge\; |b_n|<\frac{1}{2^p}]$.  
   \end{proof}
   
   There is a further extension of Lemma \ref{L:cl2}: one may replace the condition: \textit{$\mathcal{X}$ is co-enumerable} by: \textit{$\mathcal{X}$ has positive (Brouwer-)Lebesgue measure}. We do not treat this  more general \textit{Cantor-Lebesgue Theorem}  as we have no application for it in this paper. In 1903,  Lebesgue proved the \textit{Riemann-Lebesgue Lemma}, see \cite{lebesgue03}, and the Cantor-Lebesgue Theorem follows easily, see \cite{bary64}, par. 64.
   
 \subsection{Cantor's Uniqueness Theorem becomes trivial}   Brouwer's Continuity Principle and the Fan Theorem together lead to the following conclusion:
 
\begin{quote}{\it Let $a,b$ in $\mathcal{R}$ be given such that $a<b$ and let $R$ be a \emph{real} subset of $[a,b]\times \mathbb{N}$, that is: $\forall n \forall x \in [a,b]\forall y \in [a,b][(x=_\mathcal{R}y \;\wedge\; xRn )\rightarrow yRn]$.
 
 If $\forall x \in [a,b]\exists n[xRn]$, then $\exists N\forall x \in [a,b]\exists n\le N[xRn]$.} \end{quote}
 
 It follows that an infinite sequence of functions that converges to $0$ everywhere in some closed interval does so uniformly:
 \begin{quote}
 {\it Let $a,b$ in $\mathcal{R}$ be given such that $a<b$ and let $f_0, f_1, \ldots$ be an infinite sequence of functions from $[a,b]$ to such that $\forall x \in [a,b] \forall p \exists n \forall m>n[|f_m(x)|<\frac{1}{2^p}]$. Then $\forall p \exists n\forall m>n \forall x \in [a,b]  [|f_m(x)|<\frac{1}{2^p}]$.}
 \end{quote}
 
 Now let $b_0, a_1, b_1,  \ldots$ in $\mathcal{R}$ be given such that, for all $x$ in $[-\pi,\pi]$, $F(x) :=\frac{b_0}{2}+\sum_{n>0} a_n \sin nx + b_n \cos nx =0.$ Define, for each $m>0$, $F_m(x) :=\frac{b_0}{2}+\sum_{n=1}^{m} a_n \sin nx + b_n \cos nx$.
 
 Note: for each $n$, for each $m>n$, $a_n = \frac{1}{2\pi}\int_{-\pi}^\pi F_m(x)\sin nxdx$ and: $0 =\frac{1}{2\pi} \int_{-\pi}^\pi F(x) \sin nx dx = \lim_{m\rightarrow \infty} \frac{1}{2\pi}\int_{-\pi}^\pi F_m(x)\sin nxdx =a_n$. Conclude: for each $n>0$, $a_n=0$ and, for a similar reason, $b_n =0$ and also $b_0=0$. 
 
 We thus obtain Cantor's conclusion very quickly. 
 
 In fact, this short route was shown to Cantor himself by \textit{`Herr Appell'} who apparently confused pointwise and uniform convergence, see \cite{cantor80} and \cite{cooke93}.
 
 \section{The two possibilities Cantor saw for closed sets}Let $\mathcal{G}$ be an open subset of $(-\pi,\pi)$.  We call $\mathcal{G}^\ast:=\bigcup Ext_\mathcal{G} := \{x\in(-\pi,\pi)|\exists \mathcal{H} \in Ext_\mathcal{G}[x \in \mathcal{H}]\}$ \textit{the co-perfect hull of $\mathcal{G}$}. We claim: $(\mathcal{G}^\ast)^+ = \mathcal{G}^\ast$. For suppose: $x \in (\mathcal{G}^\ast)^+$. Find $n$ in $\mathbb{N}$, $y \in (x-\frac{1}{2^n}, x+\frac{1}{2^n})$ such that, for all $z$ in $(x-\frac{1}{2^n}, x+\frac{1}{2^n})$, if $y\;\#_\mathcal{R}\;z$, then $z \in \mathcal{G}^\ast$. Find $m_0$ such that $x-\frac{1}{2^n} + \frac{1}{2^{m_0}} < y-\frac{1}{2^{m_0}}$ and $y+\frac{1}{2^{m_0}}<x+\frac{1}{2^n} - \frac{1}{2^{m_0}}$. Note: for each $p>m_0$, $[x-\frac{1}{2^n} + \frac{1}{2^{m_0}} , y-\frac{1}{2^{p}}] \subseteq \mathcal{G}^\ast$ and: $[y+\frac{1}{2^{p}}<x+\frac{1}{2^n} - \frac{1}{2^{m_0}}]\subseteq \mathcal{G}^\ast$. Conclude, using the Heine-Borel Theorem: for each $p$, there is a finite subset of $Ext_\mathcal{G}$ covering $[x-\frac{1}{2^n} + \frac{1}{2^{m_0}} , y-\frac{1}{2^{p}}] \cup[y+\frac{1}{2^{p}}<x+\frac{1}{2^n} - \frac{1}{2^{m_0}}]$. Using an Axiom of Countable Choice, find an infinite sequence $\mathcal{H}_0, \mathcal{H}_1, \ldots$ of elements of $Ext_\mathcal{G}$ such that $(x-\frac{1}{2^n} + \frac{1}{2^{m_0}} , y) \cup(y, x+\frac{1}{2^n} - \frac{1}{2^{m_0}})\subseteq\bigcup\limits_{p \in \mathbb{N}}[x-\frac{1}{2^n} + \frac{1}{2^{m_0}} , y-\frac{1}{2^{p}}] \cup[y+\frac{1}{2^{p}},x+\frac{1}{2^n} - \frac{1}{2^{m_0}}]\subseteq \bigcup\limits_{p\in \mathbb{N}} \mathcal{H}_p$ and conclude: $(x-\frac{1}{2^n} + \frac{1}{2^{m_0}} , x+\frac{1}{2^n} - \frac{1}{2^{m_0}})\subseteq (\bigcup\limits_{p\in \mathbb{N}} \mathcal{H}_p)^+$, and: $x \in \mathcal{G}^\ast$.

 We distinguish two cases. \begin{enumerate}[\upshape (i)] \item $(-\pi,\pi) \subseteq \mathcal{G}^\ast$. Note: $(-\pi, \pi) = \bigcup\limits_{n \in \mathbb{N}} [-\pi+\frac{1}{2^n}, \pi-\frac{1}{2^n}]$, and using again the Heine-Borel Theorem, find an infinite sequence $\mathcal{H}_0, \mathcal{H}_1, \ldots$ of elements of $Ext_\mathcal{G}$ such that $(-\pi,\pi)\subseteq \bigcup\limits_{n \in \mathbb{N}} \mathcal{H}_n$.  Conclude: $\mathcal{G}$ is eventually full, and, using Theorem \ref{T:openevfullalmostenumerable}(ii): $[-\pi,\pi]\setminus \mathcal{G}$ is almost-enumerable. \item  $\exists x \in (-\pi,\pi)[x \notin \mathcal{G}^\ast]$. Note: for all $x$ in $(-\pi,\pi)$, if $x \notin \mathcal{G}^\ast$, then $x \notin (\mathcal{G}^\ast)^+$, that is: $\forall n \neg \forall z \in (x-\frac{1}{2^n},x)\cup(x, x+\frac{1}{2^n})[z\in \mathcal{G}^\ast]$.  In a very weak sense, therefore, every point of $(-\pi,\pi)\setminus \mathcal{G}^\ast$ is a limit point of  $(-\pi,\pi)\setminus \mathcal{G}^\ast$. \end{enumerate}
 
 This is a pale version of \textit{Cantor's Main Theorem}, as Brouwer calls it: \textit{every closed subset of $(-\pi,\pi)$ either is at most countable or contains a perfect subset},  (and is, therefore, in Cantor's view, equivalent to the continuum). Constructively, we can not prove: $\forall x \in (-\pi,\pi)[x \in \mathcal{G}^\ast ]\;\vee\; 
\exists x \in (-\pi,\pi)[x \notin \mathcal{G}^\ast     ]$; we can't even prove: $\neg\neg\bigl(\forall x \in (-\pi,\pi)[x \in \mathcal{G}^\ast ]\;\vee\; 
\exists x \in (-\pi,\pi)[x \notin \mathcal{G}^\ast     ]\bigr)$. Results related to this observation may be found in \cite{gielen81} and \cite{burgess80}. 

\section{Brouwer's work on Cantor's Main Theorem} In \cite{brouwer19}, Brouwer introduces the notion of a \textit{`set'}, (more or less a located closed subset of $\mathcal{R}$),  that admits of \textit{`an inner deconstruction'} or is `\textit{deconstructible}'\footnote{This  term from modern philosophical discourse seems a more apt translation than the literal `destructible'.}.  What he means is that the set can be split, in some sense effectively, into the set of its limit points and the set of its isolated points. If it can, he hopes that the set of its limit points is `deconstructible' again, and so on. In order to treat the `and so on',  he first develops an intuitionistic theory of countable ordinals.

Unfortunately,  there are mistakes and obscurities, as Brouwer himself knew.\footnote{In \cite{brouwer52}, Brouwer, mentioning \cite{brouwer19}, states: \textit{Reading over these developments to-day, one finds that they are obsolete and in need of radical recasting.}} We refrain from a detailed commentary.
Instead,   we make a guess as to how Brouwer could have defined his notions, had he read this paper.

Let $\mathcal{G}$ be an open and co-located subset of $(-\pi,\pi)$. We say that $\mathcal{G}$  is \textit{effectively extendible} if $\mathcal{G}^+$ is co-located again. We say that $\mathcal{G}$ is \textit{hereditarily effectively extendible} if and only if every element of $Ext_\mathcal{G}$ is co-located and we say that $\mathcal{G}$ \textit{admits of an effective final extension} if, in addition, $[-\pi,\pi]\setminus\bigcup Ext_\mathcal{G}$ is located. Brouwer uses here the expression: $[-\pi,\pi]\setminus\mathcal{G}$ is \textit{`vollst\"andig abbrechbar', `completely deconstructible'}.

It is not so easy for a `set' in Brouwer's sense to be completely deconstructible and it is not so easy for an open subset $\mathcal{G}$ of $(-\pi,\pi)$ to have an effective final extension. These are, from a constructive point of view, very strong conditions. But if they are satisfied, one hopes to be able to take the decision one, emulating Cantor, would like to make:  Brouwer's  `set', (our $[-\pi,\pi]\setminus \mathcal{G}$), is either (in a reasonable sense) countable, or (in a reasonable sense) at least as big as the continuum.

Unfortunately, there is another difficulty. Let $\mathcal{G}$ be an open subset of $(-\pi,\pi)$ and let $\mathcal{F}= [-\pi,\pi]\setminus \mathcal{G}$ be its complement. From the point of view taken in this paper, the (weak) derivative set $D_w(\mathcal{F})$ of $\mathcal{F}$ should be defined as the \textit{complement of the co-derivative of $\mathcal{G}$}, that is $D_w(\mathcal{F}):= [-\pi,\pi]\setminus \mathcal{G}^+$.  $D_w(\mathcal{F})$ contains the set $D(\mathcal{F})$ of the limit points of $\mathcal{F}$ but \textit{does not necessarily coincide with $D(\mathcal{F})$}. If one knows: $D(\mathcal{F}) = \mathcal{F}$ then   one may define an injective map from Cantor space $\mathcal{C}$ into $\mathcal{F}$ but, if one knows: $\mathcal{F} = D_w(\mathcal{F})$, one may  be unable to do so.


\begin{thebibliography}{30}
     \bibitem{bary64} N.K.~Bary, \textit{A Treatise on Trigonometric Series}, Volume I and II, Pergamon Press, Oxford, 1964. \bibitem{bernstein08} F.~Bernstein, Zur Theorie der trigonometrischen Reihen, \textit{Berichte \"uber die Verhandlungen der K\"oniglichen S\"achsischen Gesellschaft der Wissenschaften zu Leipzig}, Mathematisch-physische Klasse, 60(1908)325-338. \bibitem{brouwer19} L.E.J.~Brouwer, Begr\"undung der Mengenlehre unabh\"angig vom logischen Satz vom ausgeschlossenen Dritten. Zweiter Teil. Theorie der Punktmengen. \textit{Koninklijke Nederlandse Akademie van Wetenschappen Verhandelingen} $1^e$ Sectie 12 no. 7, 1919, also in: \cite{brouwer75}, pp. 191-221.
     \bibitem{brouwer24}L.E.J.~Brouwer, Beweis dass jede volle Funktion gleichm\"assig stetig ist, \textit{KNAW Proc.} 27(1924)189-193, also in: \cite{brouwer75}, pp. 286-290.
     \bibitem{brouwer27}L.E.J.~Brouwer, \"Uber Definitionsbereiche von Funktionen, \textit{Math. Annalen} 96(1927)446-463, also in: \cite{brouwer75}, pp. 390-405.
     \bibitem{brouwer52}L.E.J.~Brouwer, Over accumulatiekernen van oneindige kernsoorten, (On accumulation cores of infinite core species), \textit{KNAW Proc. Ser. A} 55(1952)439-441, = \textit{Indag. Math.} 14(1952)439-441, also in \cite{brouwer75}, pp. 516-518.
     \bibitem{brouwer54}L.E.J.~Brouwer, Points and Spaces, \textit{Canadian journal of mathematics} 6(1954)1-17, also in \cite{brouwer75}, pp. 522-538.
     \bibitem{brouwer75} L.E.J.~Brouwer, \textit{Collected Works, Vol. I: Philosophy and
 Foundations of Mathematics}, ed.~A.~Heyting, North Holland Publ.Co., Amsterdam,
 1975. \bibitem{burgess80} J. P.~Burgess, Brouwer and Souslin on Transfinite Cardinals, \textit{Mathematical Logic Quarterly} 
 26(1980)209-214, doi:10.1002/malq.19800261402

 \bibitem{cantor70a} G.~Cantor, \"Uber einen die trigonometrischen Reihen betreffenden Lehrsatz, \textit{Crelles Journal f. Mathematik} 72(1870)130-138, also in: \cite{cantor32}, pp. 71-79.
 \bibitem{cantor70} G.~Cantor, Beweis da\ss\; eine f\"ur jeden reellen Wert durch eine trigonometrische Reihe gegebene Funktion $f(x)$ sich nur auf eine einzige Weise in dieser Form darstellen l\"a\ss t, \textit{Crelles Journal f. Mathematik}, 72(1870)139-142, also in \cite{cantor32}, pp. 80-83.
 
 \bibitem{cantor71} G.~Cantor, Notiz zu dem Aufsatze: Beweis 
      da\ss  \;    eine f\"ur jeden reellen Wert von $x$ durch eine trigonometrische Reihe gegebene Funktion $f(x)$ sich nur auf eine  einzige Weise in dieser Form darstellen l\"a\ss t, \textit{Crelles Journal f. Mathematik}, 73(1871)294-296, also in \cite{cantor32}, pp. 84-86.
     \bibitem{cantor72} G.~Cantor, \"Uber die Ausdehnung eines Satzes aus der Theorie der trigonometrischen Reihen, \textit{Math. Annalen}, 5(1872)123-132, also in \cite{cantor32}, pp. 92-102.  \bibitem{cantor80}G.~Cantor, Bemerkung \"uber  trigonometrische Reihen, \textit{Math. Annalen}, 16(1880)113-114, also in \cite{cantor32}, p. 103.\bibitem{cantor32} G.~Cantor, \textit{Gesammelte Abhandlungen mathematischen und philosophischen Inhalts}, ed. E.~Zermelo, Springer, Berlin, 1932.
   \bibitem{cooke93} R.~Cooke, Uniqueness of Trigonometric Series and Descriptive Set Theory, 1870-1985, 
\textit{Archive for History of Exact Sciences}
 45(1993)281-334.  \bibitem{dauben} J.W.~Dauben, \textit{Georg Cantor,  His Mathematics and Philosophy of the Infinite}, Harvard University Press, Cambridge Mass., 1979.\bibitem{gielen81}W.~Gielen, H. de Swart, and W. Veldman,  The continuum hypothesis in intuitionism, \textit{The Journal of Symbolic Logic}, 46(1981)121-136.  
 \bibitem{kleenevesley65}S.C.~Kleene, R.E.~Vesley, \textit{The Foundations of Intuitionistic Mathematics, especially in relation to recursive functions}, North-Holland Publ. Co., Amsterdam,  1965.
 
 \bibitem{lebesgue03} H.~Lebesgue, Sur les s\'eries trigonom\'etriques, \textit{Ann. Sci. \'Ecole Norm. Sup.} 20(1903)453-485, also in \cite{lebesgue72}, pp. 27-59.
 
 \bibitem{lebesgue72} H.~Lebesgue, \textit{Oeuvres Scientifiques, en cinq volumes}, L'enseignement Math\'ematique, Gen\`eve, 1972, Volume III. \bibitem{purkert} W.~Purkert, H.J.~Ilgauds, \textit{Georg Cantor 1845-1918}, Birkh\"auser Verlag, Basel 1987. \bibitem{riemann67} B.~Riemann, Ueber die Darstellbarkeit einer Function durch eine trigonometrische Reihe, \textit{Aus dem dreizehnten Bande der Abhandlungen der K\"oniglichen Gesellschaft der Wissenschaften zu G\"ottingen}, 1867,  also in: \cite{riemann90}, pp. 259-303.
      \bibitem{riemann90} B.~Riemann, \textit{Gesammelte mathematische Werke, wissenschaftlicher Nachlass und Nachtr\"age, Collected Papers}, nach der Ausgabe von Heinrich Weber und Richard Dedekind (1892) neu herausgegeben von Raghavan Narasimhan, Springer, Berlin, 1990.
 \bibitem{veldman03} W.~Veldman, The Cantor-Bendixson hierarchy revisited by Brouwer, in: \textit{Proceedings of the First Seminar on the Philosophy of Mathematics in Iran, Oct. 17, 2001}, Faculty of Mathematical Sciences, Shahid Beheshti University, Teheran, 2003, pp. 79-103. \bibitem{veldman05} W.~Veldman, Two simple sets that are not positively Borel, {\em Annals of Pure and Applied Logic} 135(2005)151-2009. \bibitem{veldman06} W. ~Veldman, Brouwer's Real Thesis on Bars, in: G.~Heinzmann, G.~Ronzitti, eds., {\em Constructivism: Mathematics, Logic, Philosophy and Linguistics,  Philosophia Scientiae, Cahier Sp\'ecial 6}, 2006, pp. 21-39. \bibitem{veldman11}W.~Veldman,  Brouwer's Fan Theorem as an axiom and as a contrast to Kleene's Alternative, \textit{Arch.  Math. Logic} 53(2014)621-693.  \bibitem{young09} W.H.~Young, A note on trigonometrical series, \textit{The Messenger of Mathematics} 38(1909)44-48.\end{thebibliography}
     \end{document}